\documentclass[a4paper,11pt]{amsart}
\usepackage{amsthm,amssymb,mathtools,amsfonts,latexsym,rawfonts,amsmath, mathrsfs, lscape}
\usepackage{stmaryrd,tikz,pgfplots}
\usepackage{marginnote}  
\usepackage[backref=page]{hyperref}
\usepackage[top=1.5in, bottom=0.9in, left=0.9in, right=0.9in]{geometry}
\usepackage{comment}
\usepackage{enumitem}

\newlist{legal}{enumerate}{10}
\setlist[legal]{label*=\arabic*.}
\usepackage[textsize=tiny]{todonotes}

\newcommand{\Marginpar}[1]{\marginpar{\tiny{#1}}}
\newcommand{\Note}[1]{{\par\noindent\hrulefill\par\tiny{#1}\par\noindent\hrulefill\par}}
\newcommand{\Detail}[1]{{#1}}
\renewcommand{\Marginpar}[1]{}
\renewcommand{\Note}[1]{}
\renewcommand{\Detail}[1]{}
\renewcommand*{\backref}[1]{}
\renewcommand*{\backrefalt}[4]{%
    \ifcase #1 (Not cited.)%
    \or        (Cited on page~#2.)%
    \else      (Cited on pages~#2.)%
    \fi}

\usepackage{hyperref}
\usepackage[all]{xy}
\usepackage{amsmath,amsfonts}	% Bunch of math support
\usepackage{amsthm,latexsym,amssymb}
\usepackage{tensor}

\usepackage{array, tabularx}

\usepackage{booktabs} %for nice tables
\usepackage{siunitx}

\usepackage{tikz-cd}

\newtheorem{thm}{Theorem}
\newtheorem*{thm*}{Theorem}
\newtheorem{prop}[thm]{Proposition}
\newtheorem{lem}[thm]{Lemma}
\newtheorem*{lem*}{Lemma}
\newtheorem{cor}[thm]{Corollary}
\newtheorem*{cor*}{Corollary}

\theoremstyle{definition}
\newtheorem{defn}[thm]{Definition}
\newtheorem*{defn*}{Definition}

\newtheorem{rem}[thm]{Remark}

\renewcommand{\[}{\begin{equation*}}
\renewcommand{\]}{\end{equation*}}

\DeclareMathOperator\tr{tr}

\def \R {\mathbb R}

\def \g {\mathfrak g}

\def \n {\mathfrak n}

\def \tr {\mbox{tr}}
\def \ad {\mbox{ad}}

\begin{document}
\parskip 1mm

\title[Pluricanonical LCAK metrics]{On pluricanonical locally conformally almost K\"ahler metrics}

\author{Ethan Addison}
\address{Department of Mathematics, Texas State University, San Marcos, TX 78666-4684, USA}
\email{el.addison@txstate.edu}

\author{Tedi Dr\u{a}ghici} 
\address{Department of Mathematics and Statistics, Florida International University, Miami, FL 33199, USA}
\email{draghici@fiu.edu}

\author{Mehdi Lejmi}
\address{Department of Mathematics, Bronx Community College of CUNY, Bronx, NY 10453, USA.}
\email{mehdi.lejmi@bcc.cuny.edu}

\thanks{ }

\keywords{}

\subjclass[2010]{53C55 (primary); 53B35 (secondary)} 

\maketitle
\begin{abstract}
On an almost complex manifold $(M,J)$, a pluricanonical locally conformally almost K\"ahler (LCAK)  metric $g$ is induced by a locally conformally symplectic structure $(F,\theta)$ of the first kind, characterized by the fact that $D\theta$ is $J$-anti-invariant and that the image of the Nijenhuis tensor is $g$-orthogonal to the distribution spanned by $\{\theta^\sharp,J\theta^\sharp\}$, where $\theta$ is the Lee form and $D$ is the Levi-Civita connection. On a compact complex manifold, pluricanonical locally conformally K\"ahler (LCK) metrics have parallel Lee form. The same conclusion holds for LCK Chern--Ricci flat Gauduchon metrics. We generalize both results to LCAK metrics.
We also observe that on a compact pluricanonical LCAK manifold with a non-trivial Lee form, there is no symplectic form compatible with the same almost complex structure. Moreover, we remark that the pluricanonical LCAK condition implies that the fundamental $2$-form is an eigenform of the Hodge Laplacian, and we give a simple characterization of the pluricanonical LCAK condition on compact manifolds. Finally, we study LCAK metrics with $\theta^\sharp$ being real holomorphic, proving in that case $D\theta=0$ when the metric is Gauduchon.
\end{abstract}
  
  \section{Introduction}
  A locally conformally symplectic (LCS for short) structure on a manifold of real dimension $2n$ is given by a pair $(F,\theta)$, where $F$ is a non-degenerate $2$-form and $\theta$ is a closed $1$-form such that $dF=\theta\wedge F.$ In particular, $\theta$ is $d$-exact precisely when $F$ is globally conformal to a symplectic form. For further details about LCS structures, we refer the reader for instance to~\cite{MR1481969,MR4771164} or the excellent survey of Bazzoni~\cite{MR3880223}. The $2$-form $F$ is said to be compatible with an almost complex structure $J$ if $g:=F(\cdot,J\cdot)$ defines a Riemannian metric. The induced metric $g$ is called a locally conformally almost K\"ahler (LCAK for short) metric. If $J$ is integrable, then $g$ is a locally conformally K\"ahler (LCK for short) metric. Moreover, when $\theta$ is parallel with respect to the Levi-Civita connection $D$ of $g$, the LCK metric $g$ is called Vaisman. We remark that an LCS structure giving rise to a (non-K\"ahler) Vaisman metric is of the first kind~\cite{MR809073}, namely there exists a vector field $T$ such that both $\mathcal{L}_TF=0$ and $\theta(T)=1$ (here $\mathcal{L}$ is the Lie derivative). Moreover, the integrable almost complex structure inducing the Vaisman metric is adapted to the LCS structure (see Definition~\ref{definition-adapted}).

  It is natural to consider LCS structures $(F,\theta)$ of the first kind on almost complex manifolds such that the almost complex structure (not necessarily integrable) $J$ is adapted to the LCS structure. We call the induced metric $g$ a pluricanonical LCAK metric (see Definition~\ref{definition-pluricanonical}). Pluricanonical LCAK metrics are characterized by the fact that the image of the Nijenhuis tensor is $g$-orthogonal to the distribution spanned by $\{\theta^\sharp,J\theta^\sharp\}$ and that $D\theta$ is $J$-anti-invariant (see Corollary~\ref{equivalence-first_kind-pluri}). We point out that, when $J$ is integrable, pluricanonical LCK metrics were introduced by Kokarev~\cite{MR2520354} in the context of harmonic maps. Ornea and Verbitsky~\cite{MR4771164} and A. Moroianu and S. Moroianu~\cite{MR3674175} proved that, on compact complex manifolds, pluricanonical LCK metrics are in fact Vaisman. However, in the non-integrable case, the Lee form of a pluricanonical LCAK metric is not necessarily $D$-parallel, even in the compact case (see examples in Section~\ref{Sec3}). It turns out that there is a condition involving the Nijenhuis tensor of $J$, stated in the following theorem, in order for $\theta$ to be $D$-parallel.

\begin{thm*}(Theorem~\ref{generalization-Ornea-Verbitsky})
Let $(M, g, J, F, \theta)$ be a compact manifold of real dimension $2n$ with a pluricanonical LCAK structure. Suppose that 
\begin{equation}\label{theta-killing-condition-intro}
\int_Mg(N(T),D\theta)\,F^n=0,
\end{equation}
where $N$ is the Nijenhuis tensor, $T=\theta^\sharp$, and $N(T)=g\big(N(T,\cdot),\cdot \big)$. Then $\theta$ is $D$-parallel.
\end{thm*}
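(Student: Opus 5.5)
The plan is to show $\int_M |D\theta|^2\,F^n = 0$ by combining a Bochner-type integral identity with the algebraic constraints imposed by the pluricanonical condition, in the style of A. Moroianu and S. Moroianu. Recall from Corollary~\ref{equivalence-first_kind-pluri} that pluricanonical means: $D\theta$ is $J$-anti-invariant, and the image of $N$ is $g$-orthogonal to $\mathrm{span}\{T, JT\}$. Since $\theta$ is closed, $D\theta$ is a symmetric $2$-tensor. It therefore lies in the $J$-anti-invariant symmetric part, so $D\theta(JX,JY) = -D\theta(X,Y)$. In particular $\operatorname{tr}(D\theta) = -\delta\theta$ pairs to zero with $g$, which gives $\delta\theta = 0$ pointwise.

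\emph{Step 1: $|\theta|$ is constant.} Since $d|\theta|^2 = 2\,D\theta(T,\cdot)$, I would combine the first-kind condition $\mathcal{L}_{T}F=0$ (after the normalization $\theta(T)$ constant, which follows from the adapted-structure definition) with $dF = \theta\wedge F$. This gives $d(\iota_T F) = \theta\wedge \iota_T F - |\theta|^2 F$. Pairing this with $J$-anti-invariance should show that $D\theta(T,\cdot)$ vanishes, so that $|\theta|^2$ is constant.

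\emph{Step 2: compute $d(J\theta)$.} Note that $J\theta$ is, up to sign, $\iota_T F$. I would write $d(J\theta)(X,Y) = (D_X J\theta)(Y) - (D_Y J\theta)(X)$ and expand it using $D\theta$ and $DJ$. For an almost Hermitian metric with $dF = \theta\wedge F$, the tensor $DJ$ is expressed through $\theta$, $J\theta$ and the Nijenhuis tensor (the Gray--Hervella decomposition: the LCAK part plus the $N$ part). Contracting with $T$ then makes the terms of the form $N(T,\cdot)$ appear. The $J$-anti-invariant part of $d(J\theta)$ comes from $D\theta\circ J$ and from $N(T)$, while the $J$-invariant part is $|\theta|^2 F - \theta\wedge J\theta$ by Step 1. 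The orthogonality of $\operatorname{Im} N$ to $\{T,JT\}$ is used to kill the mixed terms such as $g(N(T,X),T)$ and $g(N(T,X),JT)$.

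\emph{Step 3: integrate.} I would compute $\int_M g(d(J\theta), D\theta\circ J)\,F^n$ in two ways. The first is via Stokes: $\int_M g(d J\theta, \beta)\,F^n = \int_M g(J\theta, \delta\beta)\,F^n$. Here $\delta(D\theta\circ J)$ is expressible through $\delta\theta = 0$, the Ricci identity, and terms in $DJ$; I expect most of these to cancel or to vanish by Step 1. The second is via the pointwise formula of Step 2, which yields $\pm|D\theta|^2 + c\, g(N(T), D\theta)$. Equating the two should give $\int_M |D\theta|^2 F^n = c\int_M g(N(T),D\theta)\,F^n$, which vanishes by \eqref{theta-killing-condition-intro}. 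Hence $D\theta=0$.

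The main obstacle is Step 3. There one must control $\delta(D\theta\circ J)$, or alternatively a Weitzenb\"ock term $\operatorname{Ric}(T,T)$, without integrability of $J$. The plan is to express every curvature or $DJ$ contribution as a divergence or as a multiple of $g(N(T),D\theta)$. If this direct route stalls, I would instead apply the Bochner formula $\int|D\theta|^2 = -\int \operatorname{Ric}(T,T)$ (valid since $\delta\theta=0$ and $d\theta=0$). I would then compute $\operatorname{Ric}(T,T)$ through the curvature identity for $J\theta$, where the Hermitian Ricci form differs from $\operatorname{Ric}$ by $N$-terms.
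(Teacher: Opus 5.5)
\paragraph{Step 3 fails as designed.}
By \eqref{dJtheta-formula}, $dJ\theta=-\|\theta\|^2F+\theta\wedge J\theta+2(D\theta)^{J,+}_{J\cdot,\cdot}+2N_{JT}$. So the exterior derivative of $J\theta$ sees only the $J$-invariant part of $D\theta$.

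Under the pluricanonical hypotheses, $dJ\theta=-\|\theta\|^2F+\theta\wedge J\theta$, which contains no derivative of $\theta$ at all. Meanwhile $D\theta=(D\theta)^{J,-}$ is exactly the part that $dJ\theta$ cannot see. Hence no pointwise evaluation of $g(dJ\theta,\beta)$, with $\beta$ linear in $D\theta$, can produce $\pm\|D\theta\|^2$.

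Concretely, since $D\theta$ is symmetric and $J$-anti-invariant, $D\theta(J\cdot,\cdot)$ is again symmetric. So $g(dJ\theta,D\theta\circ J)\equiv 0$ pointwise. The integration by parts $\int_M g(d\alpha,\beta)F^n=\int_M g(\alpha,\delta^g\beta)F^n$ only sees the skew part of $\beta$, which is zero here. Both of your ``two ways'' therefore give $0=0$. Pairing against $dJ\theta$ is what the paper does in Proposition~\ref{intf1-lcak-prop}, and it controls only $(D\theta)^{J,+}$, which already vanishes by assumption.

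\paragraph{Step 1.}
This step is fine as a citation of Lemma~\ref{constant-length}, but your justification via the adapted-structure normalization is circular. Identifying the first-kind automorphism with $\theta^\sharp/\|\theta\|^2$ already uses that $\|\theta\|$ is constant. The paper instead gets $\mathcal{L}_TF=0$ for $T=\theta^\sharp$ directly (Proposition~\ref{lee-symplectomorphism}).

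\paragraph{What is needed instead.}
To reach $(D\theta)^{J,-}$ you need the full covariant derivative, i.e.\ a Weitzenb\"ock identity. The paper pairs $\delta^g\big((D\theta)^{J,+}-(D\theta)^{J,-}\big)=\delta^g\big(D\theta(J\cdot,J\cdot)\big)$ with $\theta$ via \eqref{bochner-formula}. The Ricci identity turns the second derivatives into $\rho^\ast(T,JT)$. The $DJ$-terms, evaluated with \eqref{DJ-expression}, give terms in $\|\theta\|^2\delta^g\theta$ and $g(D_{JT}\theta,JT)$, both zero here, plus $2g(D\theta,N(T))$.

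Your fallback $\int_M\|D\theta\|^2F^n=-\int_M\mathrm{Ric}(T,T)F^n$ is valid, but converting $\mathrm{Ric}(T,T)$ into Hermitian curvature is the same computation. Your claim that the difference consists of $N$-terms only is false: even when $N=0$, $\mathrm{Ric}$ and the Chern--Ricci form differ by Lee-form terms.

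In either version you are left with a curvature term that vanishes neither pointwise, nor by Step 1, nor as a divergence, and your proposal has no mechanism to dispose of it.

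\paragraph{The missing idea.}
This is the global argument of Proposition~\ref{star-ricci-contracted}:
\begin{itemize}
\item Write $\rho^\ast=\gamma^0-\Phi$, where $\gamma^0$ is the closed Ricci form of the first canonical connection, representing $2\pi c_1$.
\item $\Phi(T,JT)=0$ because $D_TJ=D_{JT}J=0$ (Proposition~\ref{J-parallel-T}).
\item In the pluricanonical case $\theta\wedge J\theta=\frac{1}{n-1}\delta^g dF$ is co-exact.
\end{itemize}
Therefore $\int_M\rho^\ast(T,JT)F^n=\int_M g(\gamma^0,\theta\wedge J\theta)F^n=\frac{1}{n-1}\int_M g(d\gamma^0,dF)F^n=0$. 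Only with this does the Bochner identity reduce to $\int_M\|D\theta\|^2F^n$ being a constant multiple of $\int_M g(N(T),D\theta)F^n$, which then vanishes by hypothesis.
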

We point out that our proof is different from~\cite{MR4771164,MR3674175} and is actually based on a Bochner-type formula. When $J$ is integrable, the condition~(\ref{theta-killing-condition-intro}) is trivially satisfied and so we recover the result in~\cite{MR4771164,MR3674175}.

Moreover, it was proved in~\cite{angella2023notecompatibilityspecialhermitian} that a compact complex manifold equipped with a Vaisman metric $g$ does not admit a balanced metric (i.e. $d\omega^{n-1}=0$, where $\omega$ is the non-degenerate $2$-form induced by the metric) unless the metric $g$ is K\"ahler. A similar conclusion holds for pluricanonical LCAK metrics. This is in analogy with Vaisman’s theorem~\cite{Vais}, which establishes the mutual exclusivity of LCK not globally K\"ahler metrics and Kähler metrics on the same complex manifold (see also~\cite{MR4824944}).

\begin{thm*} (Theorem~\ref{main-thm-1})
 Let $(M,F,\theta)$ be a compact manifold of real dimension $2n$ with an LCS structure of the first kind. Let $J$ be an adapted almost complex structure for $(F, \theta)$. Then, there is no non-degenerate $2$-form $\omega$ compatible with $J$ such that $d\omega^{n-1}=0.$ In particular, there is no symplectic form $\omega$ compatible with $J$. 
 \end{thm*}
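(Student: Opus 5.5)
The plan is to argue by contradiction through an integration by parts, following the Vaisman case. I would split $F$ into a piece along $\theta$ and a "transverse" piece that is $d$-exact and $J$-semipositive. Pairing the transverse piece with a balanced form $\omega^{n-1}$ will give zero by Stokes, but also something strictly positive pointwise.

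First, I would use the first-kind condition to rewrite $F$. Let $T$ satisfy $\mathcal{L}_TF=0$ and $\theta(T)=1$. Cartan's formula together with $dF=\theta\wedge F$ gives
\[
0=\mathcal{L}_TF=d(\iota_TF)+\iota_T(\theta\wedge F)=d(\iota_TF)+F-\theta\wedge\iota_TF .
\]
Set $\eta:=\iota_TF$; the sign convention is to be matched with the paper's. Then
\[
F=\theta\wedge\eta-d\eta .
\]
Note also that $\eta(T)=F(T,T)=0$.

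Next, I would use that $J$ is adapted (Definition~\ref{definition-adapted}) to identify $\omega_0:=-d\eta=F-\theta\wedge\eta$ as a transverse form. The adapted condition should say that $J$ is compatible with $F$ and that $\eta$ is, up to a positive factor, $J\theta$, with $T$ and $JT$ spanning the $\{\theta^\sharp,J\theta^\sharp\}$-distribution. Then $\theta\wedge\eta$ is $J$-invariant, hence so is $\omega_0$. Using $\theta(T)=1$ and $\eta(T)=0$, one gets $\iota_T\omega_0=\eta-\eta=0$. By $J$-invariance also $\iota_{JT}\omega_0=0$. On the $g$-orthogonal complement of $\mathrm{span}\{T,JT\}$ both $\theta$ and $\eta$ vanish, so $\omega_0=F$ there. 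Hence $\omega_0$ is a real $(1,1)$-form, semipositive with respect to $J$, of rank exactly $2n-2$ at every point. In particular $\omega_0\neq 0$ everywhere, provided $n\ge 2$; the case $n=1$ is excluded or trivial since then $\omega^{n-1}=1$.

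Now suppose $\omega$ is compatible with $J$ and $d\omega^{n-1}=0$. By Stokes on the compact manifold,
\[
\int_M\omega_0\wedge\omega^{n-1}=-\int_M d\eta\wedge\omega^{n-1}=-\int_M d(\eta\wedge\omega^{n-1})=0 .
\]
On the other hand, for a $J$-invariant real $2$-form $\alpha$ one has pointwise $\alpha\wedge\omega^{n-1}=\tfrac1n(\mathrm{tr}_\omega\alpha)\,\omega^n$. If $\alpha=\omega_0$ is semipositive and nonzero, then $\mathrm{tr}_\omega\omega_0>0$; this is clear by diagonalizing $\omega_0$ simultaneously with $\omega$. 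So the integrand is a strictly positive multiple of the volume form, and the integral is strictly positive, which is a contradiction. The symplectic case follows because $d\omega=0$ implies $d\omega^{n-1}=0$.

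The main obstacle is the middle step: extracting from the precise definition of "adapted" that $\eta=\iota_TF$ is a positive multiple of $J\theta$, or at least that $\omega_0=F-\theta\wedge\eta$ is $J$-invariant and kills $T$ and $JT$. The signs must come out so that $\omega_0$ is semi\emph{positive} rather than seminegative; in the latter case I would simply replace $\omega_0$ by $-\omega_0$, since only a definite sign matters. I would check this first against the Vaisman model $F=\theta\wedge J\theta-dJ\theta$ with $|\theta|=1$. If the definition only gives $J$-invariance of $d\eta$, I would instead try to use $\mathcal{L}_TJ=0$ and the characterization in Corollary~\ref{equivalence-first_kind-pluri} to recover the same conclusion.
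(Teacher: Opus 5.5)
Your proposal is correct and is essentially the paper's proof. With the paper's convention $\eta=-\iota_TF$ and the adapted condition $J\theta=-\eta$, your $\eta=\iota_TF$ is exactly $J\theta$, so your $\omega_0=-d\eta=F-\theta\wedge J\theta$ is the paper's semipositive form $-dJ\theta$ (with $\|\theta\|=1$), which the paper likewise pairs with $\omega^{n-1}$ via Stokes. One caveat: the case $n=1$ is genuinely excluded by the paper's standing assumption $2n\ge 4$, not trivial, since in dimension $2$ one could simply take $\omega=F$.
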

We also observe that when the metric is pluricanonical LCAK, the associated fundamental $2$-form is an eigenform of the Hodge Laplacian. Additionally, in~\cite{MR3397500} it was shown that an LCK metric on a unimodular solvable Lie algebra is Vaisman if and only if $g([T,JT],JT)=0,$ where $T=\theta^\sharp.$ We extend that result to pluricanonical LCAK metrics on compact manifolds.

\begin{thm*} (Theorem~\ref{compchars-plurican})
 Let $(M, g, J, F, \theta)$ be a compact manifold of dimension $2n$ with an LCAK structure that is not almost K\"ahler (i.e. $\theta$ is not identically zero). The following statements are equivalent:

 (i) The structure $(g, J, F, \theta)$ is pluricanonical; 

 (ii) $g$ is Gauduchon, $T$ is orthogonal to the image of the Nijenhuis tensor, and $J\theta([T,JT]) = 0$;

 (iii) The trace-free part of $D \theta$ is $J$-anti-invariant and 
 $\| \theta \|$ is a constant.

\noindent Moreover, if $2n \geq 6$, the above are also equivalent with

(iv) $F$ is an eigenform of the Hodge Laplacian.
\end{thm*}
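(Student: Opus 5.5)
The plan is to prove the cycle (i)$\Rightarrow$(iii)$\Rightarrow$(ii)$\Rightarrow$(i) and then attach (iv) to (iii). Throughout we use the characterization of Corollary~\ref{equivalence-first_kind-pluri}: (i) holds if and only if $D\theta$ is $J$-anti-invariant and the image of $N$ is $g$-orthogonal to $\mathrm{span}\{T,JT\}$. We also use that $d\theta=0$, so $D\theta$ is a symmetric bilinear form. We split $D\theta=B-\frac{\delta\theta}{2n}\,g$, with $B$ the trace-free part. The trace part is $J$-invariant, so $D\theta$ is $J$-anti-invariant if and only if $B$ is $J$-anti-invariant and $\delta\theta=0$.

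\textbf{(i)$\Rightarrow$(iii).} A $J$-anti-invariant symmetric form is trace-free, since $\operatorname{tr} b=\sum_i\big(b(e_i,e_i)+b(Je_i,Je_i)\big)=0$. Hence (i) gives $\delta\theta=0$, i.e.\ $g$ is Gauduchon, and $D\theta=B$ is anti-invariant. For the constancy of $\|\theta\|$ we use the first-kind vector field $V$ with $\mathcal L_VF=0$ and $\theta(V)=1$. Adaptedness of $J$ should identify $V$ with $T/\|\theta\|^2$, possibly up to a $J$-compatible correction. We then use
\[ 0=\mathcal L_VF=d(\iota_VF)+\iota_V(\theta\wedge F)=d(\iota_VF)+F-\theta\wedge\iota_VF \]
together with the closedness of $\theta$ and $\mathcal L_V\theta=d(\theta(V))=0$. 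The aim is to show $D\theta(T,\cdot)=0$, which is exactly $d\|\theta\|^2=2D\theta(T,\cdot)=0$.

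\textbf{(iii)$\Rightarrow$(ii).} Constancy of $\|\theta\|$ gives $D\theta(T,\cdot)=0$. Anti-invariance of $B$ gives $B(T,T)=-B(JT,JT)$, that is,
\[ \tfrac{\delta\theta}{2n}\|\theta\|^2=-D\theta(JT,JT)-\tfrac{\delta\theta}{2n}\|\theta\|^2 . \]
So $\delta\theta$ is a pointwise multiple of $D\theta(JT,JT)$. We then integrate $\mathrm{div}(JT)$, or use $\int_M\delta\theta\,F^n=0$ together with the formula for $\delta F$ in terms of $J\theta$ coming from $dF=\theta\wedge F$, to conclude $\delta\theta\equiv0$. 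The quantity $J\theta([T,JT])=g(JT,D_TJT-D_{JT}T)$ should then be expressible through $D\theta(JT,JT)$, $D\theta(T,\cdot)$ and a Nijenhuis term. The orthogonality of $T$ to the image of $N$ comes from the standard expression of $D_X(J\theta)$ in terms of $J(D\theta)$ and $N$, which holds for any almost Hermitian structure with $dF=\theta\wedge F$. Anti-invariance of $D\theta$ forces the mixed component to vanish.

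\textbf{(ii)$\Rightarrow$(i).} This runs the same identities backwards. Gauduchon and $J\theta([T,JT])=0$ give $D\theta(JT,JT)=0$ and $\delta\theta=0$. A Bochner/integration argument over compact $M$ then gives the anti-invariance of $D\theta$, and we conclude by Corollary~\ref{equivalence-first_kind-pluri}.

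\textbf{(iv).} We compute
\[ \Delta F=d\delta F+\delta(\theta\wedge F), \]
with $\delta F$ proportional to $(n-1)J\theta$. We then decompose the result into $\mathrm{span}\{F\}$, the primitive $J$-invariant and $J$-anti-invariant $(1,1)$/$(2,0)$ parts, and terms involving $\theta\wedge J\theta$. The condition $\Delta F=\lambda F$ should be equivalent to: $B$ anti-invariant, $\delta\theta$ constant, hence zero by integration, and $\|\theta\|^2$ constant, with $\lambda$ a multiple of $\|\theta\|^2$. The hypothesis $2n\ge6$ enters because Lefschetz injectivity $L\colon\Lambda^1\to\Lambda^3$ and the separation of primitive components fail in dimension $4$.

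\textbf{Main obstacle.} The hardest step is likely extracting the Nijenhuis orthogonality and $\delta\theta=0$ from (iii), which contains no Nijenhuis hypothesis. For this we would first try to derive a precise pointwise formula for the $J$-invariant part of $D(J\theta)$ under the LCS condition, and then integrate a Weitzenböck identity for $\theta$.
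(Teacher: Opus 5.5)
Your plan has a genuine gap at the point you flag as the main obstacle. It affects (iii)$\Rightarrow$(ii), (ii)$\Rightarrow$(i) and (iv)$\Rightarrow$(i), none of which can be obtained pointwise.

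In (iii)$\Rightarrow$(ii) you correctly derive $\frac{1}{n}\|\theta\|^2\delta^g\theta=-D\theta(JT,JT)$. Combined with $\int_M\delta^g\theta\,F^n=0$, this only gives $\int_M D\theta(JT,JT)\,F^n=0$, not pointwise vanishing. Integrating $\mathrm{div}(JT)$ carries no information at all, since $\delta^g J\theta=0$ identically ($J\theta$ is coexact, $\delta^gF=-(n-1)J\theta$). Your claim that anti-invariance of $D\theta$ forces $N_T=0$ is also false pointwise. In (\ref{dJtheta-formula}) every term except $2N_{JT}$ is $J$-invariant, so $N_{JT}=\frac12(dJ\theta)^{J,-}$ is unconstrained by any algebraic condition on $D\theta$.

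In (ii)$\Rightarrow$(i), the step ``Gauduchon and $J\theta([T,JT])=0$ give $D\theta(JT,JT)=0$'' is unjustified. We have $J\theta([T,JT])=D\theta(T,T)-D\theta(JT,JT)$, and (ii) does not give constancy of $\|\theta\|$, so you only get $D\theta(T,T)=D\theta(JT,JT)$. Falling back on the Weitzenb\"ock formula (\ref{LCS-bochner-formula}) does not help. It brings in $\int_M\rho^\ast(T,JT)\,F^n$ and $\int_M g(D\theta,N(T))\,F^n$. The only available control on the former, Proposition~\ref{star-ricci-contracted}, already assumes the pluricanonical condition, so that route is circular.

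The missing idea is a curvature-free global identity. Since $dJ\theta$ is exact and $\delta^g dF$ is coexact, $\int_M g(dJ\theta,\delta^g dF)\,F^n=0$. Inserting (\ref{dJtheta-formula}) and (\ref{deldF}) gives (\ref{intf1-lcak}) and its rewritten form (\ref{intf1-lcak-v2}) (Proposition~\ref{intf1-lcak-prop}).
\begin{itemize}
\item Under (ii), (\ref{intf1-lcak}) collapses to $\int_M\|(D\theta)^{J,+}\|^2F^n=0$, which gives (i).
\item Under (iii), (\ref{intf1-lcak-v2}) collapses to $\int_M\big[4\|N_{JT}\|^2+\frac{n-1}{n}(\delta^g\theta)^2\big]F^n=0$. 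This yields $N_T=0$ and the Gauduchon condition at once, which is why (iii) needs no Nijenhuis hypothesis.
\end{itemize}

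Your treatment of (iv) also needs correcting. By (\ref{DeltaF}) and Corollary~\ref{F-eigenv-char}, $\Delta^gF=\lambda F$ gives $N_T=0$, $(D\theta)^{J,+}_0=0$, and only that $\frac2n\delta^g\theta+\|\theta\|^2$ is constant. It does not give $\delta^g\theta$ and $\|\theta\|^2$ separately constant. One again needs (\ref{intf1-lcak-v2}) to reach $\frac{n-1}{n}\int_M(\delta^g\theta)^2F^n=0$. The restriction $2n\ge 6$ comes from the factor $(n-2)$ in front of $((D\theta)_0)^{J,+}$ in (\ref{DeltaF}), not from Lefschetz; for $n=2$, $L:\Lambda^1\to\Lambda^3$ is an isomorphism.

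Finally, for constancy of $\|\theta\|$ in (i)$\Rightarrow$(iii) you need not go through the first-kind vector field. Lemma~\ref{constant-length} obtains it by applying $d$ to $dJ\theta=-\|\theta\|^2F+\theta\wedge J\theta$.
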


\begin{comment}
We also observe that when the metric is pluricanonical LCAK, the associated fundamental $2$-form is an eigenform of the Hodge Laplacian.
\begin{cor*}(Corollary~\ref{eigenform})
Let $(M, g, J, F, \theta)$ be a compact manifold of real dimension $2n$ with a Gauduchon LCAK structure. In dimension $2n \geq 6$, the metric $g$ is pluricanonical LCAK if and only if $F$ is an eigenform of the Hodge Laplacian. In dimension $2n=4$, the pluricanonical condition on $g$ implies the eigenform property of $F.$
\end{cor*}

Additionally, in~\cite{MR3397500} it was shown that an LCK metric on a unimodular solvable Lie algebra is Vaisman if and only if $g([T,JT],JT)=0,$ where $T=\theta^\sharp.$ We extend that result to pluricanonical LCAK metrics on compact manifolds.

\begin{cor*}(Corollary~\ref{pluricanonical-condition})
Let $(M, g, J, F, \theta)$ be a compact manifold of real dimension $2n$ with a Gauduchon LCAK structure and assume that $T$ is orthogonal to the image of the Nijenhuis tensor. Then the metric $g$ is pluricanonical LCAK if and only if $J\theta\left([T,JT]\right) = 0.$
\end{cor*}
\end{comment}

Furthermore, in a recent paper~\cite{barbaro2025calabiyaulocallyconformallykahler}, it was shown that on a compact complex manifold, LCK metrics which are Hermitian-Ricci flat  with respect to some canonical Hermitian connection in the $1$-parameter family of connections $\nabla^t$ introduced by Gauduchon~\cite{MR1456265} are in fact Vaisman provided that the metric is Gauduchon. It turns out that this can be extended to LCAK metrics.

\begin{thm*}(Theorem~\ref{Gauduchon-Ricci-flat})
Let $(M, g, J, F, \theta)$ be a compact manifold of real dimension $2n$ with a Gauduchon LCAK structure such that $T=\theta^\sharp$ is $g$-orthogonal to the image of the Nijenhuis tensor. Suppose also that the structure is Hermitian-Ricci flat with respect to some canonical Hermitian connection $\nabla^t$, and that $$\int_Mg(N(T),D\theta)\,F^n=0.$$ Then $\theta$ is $D$-parallel.
%we have the following:
%$\begin{enumerate}[label=\arabic*)]
%$\item if $\int_Mg([T,JT],JT)\,F^n=0$, then $g$ is a pluricanonical LCAK metric.\\
%\item if $\int_Mg(N(T),D\theta)\,F^n=0$, then $\theta$ is $D$-parallel. 
%\end{enumerate}
\end{thm*}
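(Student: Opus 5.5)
The plan is to reduce Theorem~\ref{Gauduchon-Ricci-flat} to Theorem~\ref{generalization-Ornea-Verbitsky}. I will show that the Hermitian-Ricci flat condition, together with the Gauduchon assumption and the condition that $T$ is orthogonal to the image of $N$, forces the structure to be pluricanonical. The integral hypothesis $\int_M g(N(T),D\theta)\,F^n=0$ is exactly the extra assumption of Theorem~\ref{generalization-Ornea-Verbitsky}, so that theorem then gives $D\theta=0$.

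By Theorem~\ref{compchars-plurican}(ii), pluricanonicity is equivalent, under the Gauduchon and Nijenhuis-orthogonality assumptions, to the single scalar condition $J\theta([T,JT])=0$. Since $\theta$ is closed, $\theta([T,JT]) = T(\theta(JT)) - JT(\theta(T))$, and $\theta(JT)=0$. The condition therefore reduces to $JT\cdot\|\theta\|^2=0$, or equivalently to $\|\theta\|$ being constant, which is criterion (iii).

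\textbf{Step 1: curvature of $\nabla^t$.} I will write the Ricci form $\rho^t$ of $\nabla^t$ on the anticanonical bundle as $\rho^t = \rho^{1} + c_t\, d(J\delta F)$-type terms. For an LCAK structure, $\delta F$ is a multiple of $J\theta$ (up to sign and a factor $n-1$), so this becomes $\rho^t = \rho^{1} + c_t\, d(J\theta)$, with an additional correction from $N$ when $J$ is non-integrable. Here $\rho^1$ is the Chern Ricci form, which is closed and represents $2\pi c_1$. Hermitian-Ricci flatness then gives $\rho^1 = -c_t\, d(J\theta)$.

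\textbf{Step 2: integral identity.} I will pair this identity with $F$, taking traces via $\Lambda$, and integrate against $F^n$. Several facts enter here. The relation $dF=\theta\wedge F$ lets me integrate by parts. The Gauduchon condition $\delta\theta=0$ kills the divergence terms. The orthogonality of $T$ to $\operatorname{Im}N$ removes the Nijenhuis contributions that pair with $\theta$ and $J\theta$.

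\textbf{Step 3: conclusion.} What I aim to extract is an identity of the form $\int_M \bigl(|\theta|^2\,\Lambda d(J\theta) \text{ combined terms}\bigr)F^n=0$, which yields $\int_M g([T,JT],JT)\,F^n=0$. From there I want to upgrade to pointwise vanishing. The route is a Bochner argument: write $\int_M g([T,JT],JT)F^n$ as $\int_M |(D\theta)^{J,+}|^2F^n$, a sum of squares, up to a multiple. Vanishing of this integral gives the $J$-anti-invariance of $D\theta$, which is the pluricanonical condition. Theorem~\ref{generalization-Ornea-Verbitsky} then finishes the proof.

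The main obstacle is this last passage from an integral of $g([T,JT],JT)$ to the pointwise pluricanonical condition. I would first try the Bochner-type formula underlying Theorem~\ref{generalization-Ornea-Verbitsky}, applied to $J\theta$ rather than $\theta$. It should express $\int_M |(D\theta)^{J\text{-inv}}|^2$ in terms of $\int_M \mathrm{Ric}$-type quantities. Those quantities are exactly what Hermitian-Ricci flatness controls.
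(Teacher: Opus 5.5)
Your reduction is fine in outline. Under the Gauduchon condition and $N_T=0$, identity (\ref{intf1-lcak}) reads $\int_M\|(D\theta)^{J,+}\|^2F^n=\frac n2\int_M g([T,JT],JT)\,F^n$. So the step you call the main obstacle, passing from the vanishing of that integral to $(D\theta)^{J,+}=0$, is immediate.

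\textbf{The gap.} It lies one step earlier: nothing in your Steps 1--2 produces $\int_M g([T,JT],JT)\,F^n=0$. Tracing the Ricci-flat relation $\gamma^0=\frac{t(n-1)}{2}dJ\theta$ against $F$ gives only the scalar identity $g(\gamma^0,F)=-\frac{t(n-1)^2}{2}\|\theta\|^2$ (by (\ref{dJtheta-formula}) and $\delta^g\theta=0$). That identity carries no information about $g(D_{JT}\theta,J\theta)$.

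\textbf{What is missing.} The paper's argument has three steps.
\begin{enumerate}
\item Evaluate the Ricci-flat relation on $(T,JT)$, i.e.\ pair it with $\theta\wedge J\theta$ rather than with $F$. Since $\Phi(T,JT)=0$ when $N_T=0$, (\ref{Chernform}) and an integration by parts give $\int_M\rho^\ast(T,JT)F^n=\frac{t(n-1)}{2}X$, where $X:=\int_M g(D_{JT}\theta,J\theta)\,F^n$.
\item Insert this into the Bochner formula (\ref{LCS-bochner-formula}) for $\theta$ itself, not for $J\theta$. This is where the hypothesis $\int_M g(N(T),D\theta)F^n=0$ is actually used. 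Combining with (\ref{intf1-lcak}) gives
\begin{equation*}
\int_M\|(D\theta)^{J,+}\|^2F^n=-\tfrac n2\,X,\qquad \int_M\|(D\theta)^{J,-}\|^2F^n=(1-t)\tfrac{n-1}{2}\,X .
\end{equation*}
\item Use the range $-1\le t\le 1$, which your proposal never invokes. For $t<1$ the opposite signs force $X=0$, so $D\theta=0$ directly. For $t=1$ one only gets $(D\theta)^{J,-}=0$. Then $g(D_{JT}\theta,J\theta)=\frac12T(\|\theta\|^2)$ integrates to zero, so again $X=0$; this is the content of Theorem~\ref{anti-pluricanonical}.
\end{enumerate}
With these steps in place, the detour through Theorem~\ref{generalization-Ornea-Verbitsky} is unnecessary.

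\textbf{The preliminary reduction is also wrong.} The quantity in Theorem~\ref{compchars-plurican}(ii) is $J\theta([T,JT])=g([T,JT],JT)$, not $\theta([T,JT])$; closedness of $\theta$ controls only the latter. Using (\ref{dJtheta-formula}) with $N_T=0$,
\begin{equation*}
J\theta([T,JT])=\tfrac12T(\|\theta\|^2)-g(D_{JT}\theta,J\theta),
\end{equation*}
which is not a $JT$-derivative of $\|\theta\|^2$. Even so, $JT(\|\theta\|^2)=0$ would not make $\|\theta\|$ constant. Criterion (iii) also requires $(D\theta)^{J,+}_0=0$, not only constancy of $\|\theta\|$.

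A minor point on Step 1: Gauduchon's relation $\gamma^t=\gamma^0-\frac{t(n-1)}{2}dJ\theta$ holds verbatim in the almost Hermitian case. The Nijenhuis correction sits in $\gamma^0=\rho^\ast+\Phi$, and it vanishes at $(T,JT)$.
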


Finally, we study LCAK metrics $g$ when the image of the Nijenhuis tensor is $g$-orthogonal to Span$(T,JT)$ and $D\theta$ is $J$-invariant (as opposed to pluricanonical LCAK metrics). We call such LCAK metrics {\it anti-pluricanonical} LCAK (see Definition~\ref{def-anti-pluricanonical}). We prove that the anti-pluricanonical condition is equivalent to the fact that $T=\theta^\sharp$ is a real holomorphic vector field (see Proposition~\ref{holomorphic}). It was actually shown by A. Moroianu, S. Moroianu and Ornea~\cite{MR3830777} that on a compact LCK manifold, if $\theta^\sharp$ is a holomorphic vector field, then the LCK metric $g$ is Vaisman provided that $g$ is Gauduchon. The following extends their result to the non-integrable setting. 

 \begin{thm*}(Theorem~\ref{anti-pluricanonical})
Let $(M, g, J, F, \theta)$ be a compact manifold of real dimension $2n$ with an LCAK structure.
 Suppose that the image of the Nijenhuis tensor is $g$-orthogonal to Span$(T,JT)$.
 Then, $g$ is an anti-pluricanonical LCAK Gauduchon metric if and only if $\theta$ is $D$-parallel.
 \end{thm*}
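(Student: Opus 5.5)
The converse direction is immediate. If $D\theta=0$, then $D\theta$ is trivially $J$-invariant, so $g$ is anti-pluricanonical. Moreover $\delta\theta=-\tr(D\theta)=0$, so $g$ is Gauduchon; for an LCAK metric the Gauduchon condition $\delta^g\theta=0$ is equivalent to $dd^c F^{n-1}$-type closedness via $d F^{n-1}=(n-1)\theta\wedge F^{n-1}$.

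For the forward direction, assume $g$ is Gauduchon ($\delta\theta=0$), $D\theta$ is $J$-invariant, and $N$ has image orthogonal to $\mathrm{Span}(T,JT)$. I would run a Bochner argument on the Killing-type decomposition of $D\theta$. Since $\theta$ is closed, $D\theta$ is symmetric. By Proposition~\ref{holomorphic}, $T$ is real holomorphic, i.e. $\mathcal{L}_TJ=0$. The Bochner formula for the closed, coclosed $1$-form $\theta$ reads
\[
\int_M |D\theta|^2\,F^n = -\int_M \mathrm{Ric}(T,T)\,F^n ,
\]
because $\Delta\theta=0$. On the other hand, I would use the standard identity for a holomorphic vector field $T$ on an almost Hermitian manifold, which comes from applying $\mathcal{L}_TJ=0$ and contracting:
\[
\int_M |D\theta|^2 F^n = \int_M \mathrm{Ric}^*(T,T)\,F^n + (\text{torsion/Nijenhuis terms}),
\]
where $\mathrm{Ric}^*$ is the $*$-Ricci tensor. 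The orthogonality hypothesis on $N$ is what kills the Nijenhuis terms involving $T$.

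The alternative, which I would actually try first, uses the LCS structure directly. Since $\mathcal L_T J=0$ and $dF=\theta\wedge F$, we get $\mathcal L_T F = d\iota_T F+\iota_T(\theta\wedge F)=d(JT)^\flat\pm|\theta|^2F\mp\theta\wedge\iota_TF$, up to sign conventions. Then $\mathcal L_T g = 2D\theta$ (the symmetric part) equals $(\mathcal L_T F)(\cdot,J\cdot)$ because $J$ is $T$-invariant. Hence $D\theta$ is determined by $d(J\theta)$ plus explicit terms in $|\theta|^2 g$ and $\theta\otimes\theta+J\theta\otimes J\theta$. Two consequences follow.

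Taking the trace and using $\delta\theta=0$ relates $\delta(J\theta)$-type quantities to $|\theta|^2$. In particular, $\int |\theta|^2$ is expressed via $\langle d(J\theta),F\rangle$. Integrating, I would use $\delta F$, which is a multiple of $J\theta$ for LCAK metrics. This gives $\int\langle d J\theta, F\rangle = \int\langle J\theta,\delta F\rangle = c\int|\theta|^2$.

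Pairing $D\theta$ with itself, substituting the expression above, and integrating by parts, the term $\int\langle D\theta, d(J\theta)(\cdot,J\cdot)\rangle$ should reduce via $\delta\theta=0$, $d\theta=0$ and the Nijenhuis orthogonality to quantities in $\theta(T)$ and $JT$. The goal is to show $\int|D\theta|^2F^n=0$.

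The main obstacle is this last integration by parts. One needs the divergence of $D\theta\circ J$, or equivalently $\delta(J D\theta)$, in the non-integrable setting. This is exactly where $(D_XJ)$ enters, through the decomposition of $DJ$ into the Lee-form part and the Nijenhuis part. The hypothesis that the image of $N$ is orthogonal to $T,JT$ should make every Nijenhuis contribution vanish after contraction with $T$ or $JT$. I would carefully compute $\sum_i (D_{e_i}J)(D_{e_i}T)$ and $\delta(J)$, using the known formula $\delta J = (n-1)J\theta$-type for LCAK metrics. The expected final identity is $\int_M|D\theta|^2F^n=0$, giving $D\theta=0$.
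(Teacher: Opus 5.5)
\emph{Gap in the forward direction.} Your converse direction is fine. The relation you extract from $\mathcal{L}_TJ=0$, namely $2(D\theta)_{J\cdot,\cdot}=dJ\theta+\|\theta\|^2F-\theta\wedge J\theta$, is also correct; it is (\ref{dJtheta-formula}) under your hypotheses. The gap is in the step you flag as the main obstacle, and it is not a matter of bookkeeping with $D_XJ$.

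Suppose you evaluate $\int_M\langle (D\theta)_{J\cdot,\cdot},dJ\theta\rangle F^n$ by computing $\delta^g\big((D\theta)_{J\cdot,\cdot}\big)=-\sum_iD^2_{e_i,Je_i}\theta-(D\theta)\big(\sum_i(D_{e_i}J)e_i,\cdot\big)$. The second-order term only sees the part of $D^2\theta$ that is skew in its first two slots, so it is pure curvature: $\sum_iD^2_{e_i,Je_i}\theta=-\rho^\ast(T,\cdot)$. Carry your plan through with $\delta^g\theta=0$, $N_T=0$ and $D\theta$ $J$-invariant. Every first-order term then integrates to zero, and you arrive exactly at $\int_M\|D\theta\|^2F^n=\int_M\rho^\ast(T,JT)\,F^n$, which is (\ref{LCS-bochner-formula}) in this situation.

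Nothing in the hypotheses controls this curvature integral. Nijenhuis orthogonality only removes $N$-terms. The argument of Proposition~\ref{star-ricci-contracted}, which kills the integral in the pluricanonical case, uses $\delta^gdF=(n-1)\theta\wedge J\theta$. That fails here, because $\delta^gdF$ now also contains $2(D\theta)_{J\cdot,\cdot}$.

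Your first route has the same defect. It only yields $\int_M\rho^\ast(T,JT)F^n=-\int_M\mathrm{Ric}(T,T)F^n$. Off the K\"ahler case these two curvature terms are not equal: their difference involves $DJ$ and its derivative, hence Lee-form terms and not just Nijenhuis terms, so nothing cancels. Your trace identity is also empty, since both sides equal $-(n-1)\int_M\|\theta\|^2F^n$.

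\emph{The missing idea.} Avoid second derivatives of $\theta$ altogether. Since $dF=\theta\wedge F$, the coexact form $\delta^gdF=\delta^g(\theta\wedge F)$ is first order in $\theta$. It supplies an independent expression for the $J$-invariant part of $D\theta$, namely (\ref{deldF}): $\delta^gdF=(\delta^g\theta)F-2N_{JT}+2(D\theta)^{J,+}_{J\cdot,\cdot}+(n-1)\theta\wedge J\theta$. Pairing it with the exact form $dJ\theta$ and using $L^2$-orthogonality gives the curvature-free identity (\ref{intf1-lcak}).

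Under your hypotheses, $(D\theta)(JT,JT)=(D\theta)(T,T)$, so $J\theta([T,JT])=g(D_TJT,JT)-g(D_{JT}T,JT)=\tfrac12T(\|\theta\|^2)-\tfrac12T(\|\theta\|^2)=0$. With $N_T=0$ and $\delta^g\theta=0$, the identity collapses to $\int_M4\|(D\theta)^{J,+}_{J\cdot,\cdot}\|^2F^n=0$, whence $D\theta=(D\theta)^{J,+}=0$. This is how the paper argues.
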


The paper is organized as follows: in Section~\ref{Sec1}, we first discuss the link between locally conformally symplectic structures of the first kind and the pluricanonical LCAK condition. Then, we prove several results about pluricanonical LCAK metrics. In Section~\ref{sec-anti-plu}, we introduce and study anti-pluricanonical LCAK metrics.
In Section~\ref{Sec3}, we provide compact examples of pluricanonical (non-LCK) LCAK metrics with both a non $D$-parallel and $D$-parallel Lee form, and we classify unimodular almost abelian pluricanonical LCAK Lie algebras in dimension $4.$

  \section{Pluricanonical locally conformally almost K\"ahler metrics}\label{Sec1}

We start by fixing some notations. Throughout the paper, a non-degenerate $2$-form $\omega$ on an almost complex manifold $(M,J)$ of real dimension $2n$ is said to be compatible with the almost complex structure $J$ if $g:=\omega(\cdot,J\cdot)$ is a Riemannian metric. The Riemannian metric $g$ is called then an almost Hermitian metric. The square of the norm of the metric $g$ is $\|g\|^2=2n$ while $\|\omega\|^{2}=n.$ We denote by $D$ the Levi-Civita connection of $g$. Let $R$ be the Riemannian curvature of $D$ with the convention $R_{X,Y}=D_{[X,Y]}-[D_X,D_Y]$, for $X$ and $Y$ vector fields. We define then the star-Ricci form as $\rho^\ast=R(\omega)$, where $R$ is seen as an endomorphism of $2$-forms. The Nijenhuis tensor of the almost complex structure $J$ is given by (note the convention)
$$4N(X,Y)=[JX,JY]-[X,Y]-J[JX,Y]-J[X,JY].$$ In particular, if a vector field $X$ is orthogonal to the image of the Nijenhuis tensor $N$, then $JX$ is also orthogonal. We denote by $N_X$ the $2$-form $N_X:=g(N(\cdot,\cdot),X)$ and by $N(X)$ the $2$-tensor $N(X):=g(N(X,\cdot),\cdot).$ 

For any $2$-tensor $\phi$, we denote its $J$-invariant part by 
$$\phi^{J,+}(X,Y)=\frac{1}{2}\left(\phi(X,Y)+\phi(JX,JY)\right) $$ and its $J$-anti-invariant part by 
$$\phi^{J,-}(X,Y)=\frac{1}{2}\left(\phi(X,Y)-\phi(JX,JY)\right).$$ We say that $\phi$ is $J$-invariant (resp. $J$-anti-invariant) if $\phi^{J,-}=0$ (resp. $\phi^{J,+}=0$). Moreover, $\phi^{sym}$ (resp. $\phi^{skew}$) is the symmetric
part of $\phi$ with respect to $g$ (resp. anti-symmetric part). We denote by $\alpha^\sharp$ the $g$-Riemannian dual of a $1$-form $\alpha,$ and by $\delta^g$ the codifferential defined as the adjoint of the exterior derivative $d$ with respect to the metric $g.$ 
%We also recall that if $\phi$ is a $J$-invariant $2$-form, then 
%\begin{equation}\label{J-invarinat-formula}
%\phi\wedge\psi\wedge\omega^{n-2}=\frac{1}{n(n-1)}\left( g(\phi,\omega)g(\psi,\omega) - g(\phi,\psi) \right)\omega^n,
%\end{equation}
%for any $2$-form $\psi$, where $(g,\omega)$ is any almost Hermitian structure (see for instance~\cite[Equation 49]{MR742896}).

\subsection{Locally conformally almost K\"ahler metrics.}

We collect here some facts about locally conformally almost K\"ahler metrics. Let $(M, F, \theta)$ be a locally conformally symplectic (LCS for short) manifold of real dimension $2n$, meaning $F$ is a non-degenerate 2-form and $\theta$ is a closed 1-form so that
$$ dF = \theta \wedge F.$$
The 1-form $\theta$ is called {\it the Lee form} of the LCS structure. It is well-known that an exact form $\theta$ corresponds to a globally conformally symplectic structure. Now, suppose that $F$ is compatible with an almost complex structure $J$, that is $g:=F(\cdot,J\cdot)$ is a Riemannian metric. The induced metric $g$ is called a {\it locally conformally almost K\"ahler metric} (LCAK for short). We have the following observations when $T=\theta^\sharp$ is $g$-orthogonal to the image of the Nijenhuis tensor.

 \begin{prop}\label{J-parallel-T}\cite[Proposition 2.1]{MR4698640}
Let $(M, g, J, F, \theta)$ be an LCAK manifold. Suppose that $T=\theta^\sharp$ is $g$-orthogonal to the image of the Nijenhuis tensor. Then, we have $D_TJ=D_{JT}J=0.$ Moreover, $N(T)$ is $g$-symmetric.
 \end{prop}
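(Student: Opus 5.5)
The plan is to apply the standard formula for $DJ$ of an almost Hermitian structure in terms of $dF$ and the Nijenhuis tensor. With the paper's conventions it reads
\[
2g((D_XJ)Y,Z)=dF(X,JY,JZ)-dF(X,Y,Z)+c\,g(N(Y,Z),JX),
\]
where $c$ is a universal constant ($\pm 2$ or $\pm 4$, depending on the normalization $4N$). I will pin down $c$ by a quick check, but it plays no role in the argument.

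First I substitute $dF=\theta\wedge F$. A direct expansion shows that the terms involving $\theta$ contribute only combinations like $\theta(JY)F(JZ,X)$, $\theta(Y)g(X,Z)$ and similar, each containing $\theta(Y)$, $\theta(Z)$, $\theta(JY)$ or $\theta(JZ)$. The $\theta(X)F(Y,Z)$ term cancels against $\theta(X)F(JY,JZ)$, because $F$ is $J$-invariant. This gives the LCK-type formula
\[
2(D_XJ)Y=\theta(JY)X-\theta(Y)JX+g(X,Y)JT+\omega(X,Y)T+(\text{Nijenhuis term}),
\]
up to signs that I will fix.

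Next I set $X=T$. The first four terms become $\theta(JY)T-\theta(Y)JT+\theta(Y)JT+F(T,Y)T$. Since $F(T,Y)=g(JT,Y)=-\theta(JY)$, these cancel pairwise. The remaining term is $g(N(Y,Z),JT)$, which vanishes because $JT$ is also orthogonal to the image of $N$, as noted right after the definition of $N$. This gives $D_TJ=0$.

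For $X=JT$ the same cancellation occurs, now using $\theta(J\cdot)$ together with $J^2=-1$. The leftover term is $g(N(Y,Z),J^2T)=-g(N(Y,Z),T)$, which is $0$ by hypothesis. This gives $D_{JT}J=0$.

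For the symmetry of $N(T)$, I will use the standard identity $N(X,Y)$ expressed through $DJ$:
\[
4N(X,Y)=(D_{JX}J)Y-(D_{JY}J)X+(D_XJ)JY-(D_YJ)JX
\]
(up to sign conventions). Pairing with $Z$ and taking $X=T$, the terms $(D_{JT}J)Y$ and $(D_TJ)JY$ vanish by the first part. So $4g(N(T,Y),Z)=-g((D_{JY}J)T,Z)-g((D_YJ)JT,Z)$. It then remains to show this expression is symmetric in $Y,Z$.

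For that I use the skew-adjointness of $D_XJ$ and the relation $(D_XJ)J=-J(D_XJ)$. These turn $-g((D_YJ)JT,Z)$ into $-g(T,(D_YJ)JZ)$, and similarly for the other term. I then apply the formula for $2g((D_XJ)Y,Z)$ once more, with the $\theta$-terms and the Nijenhuis terms evaluated against $T$ and $JT$; the Nijenhuis contributions vanish again by orthogonality. The expected outcome is that the antisymmetric part in $(Y,Z)$ equals $g(N(Y,Z),T)$ up to a constant, which is zero by hypothesis.

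A cleaner alternative is to note that $N_T$ is a $2$-form. Using the Nijenhuis symmetry $N(JX,Y)=-JN(X,Y)$, one gets $g(N(T,Y),Z)-g(N(T,Z),Y)$ as a cyclic-type combination. By the identity relating the totally skew part of $g(N(\cdot,\cdot),\cdot)$ to $N_T$, this reduces to $g(N(Y,Z),T)=0$.

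The main obstacle is this last symmetry step, specifically keeping the signs and the constant in the $DJ$–Nijenhuis formula consistent with the paper's normalization $4N$.
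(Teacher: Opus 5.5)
Your proof of $D_TJ=D_{JT}J=0$ is correct and is the paper's argument made explicit. The paper writes $(\iota_Z dF)^{J,-}=D_ZJ-2N_{JZ}$ and notes that $\iota_T(\theta\wedge F)$ is $J$-invariant while $\iota_{JT}(\theta\wedge F)=0$. That is exactly the cancellation you find at $X=T$ and $X=JT$.

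The gap is in the symmetry of $N(T)$. The paper obtains it from Gauduchon's identity $g(N(X,Y),Z)+g(N(Y,Z),X)+g(N(Z,X),Y)=0$, which holds because $dF=\theta\wedge F$ has no $(3,0)+(0,3)$-component. Putting $X=T$ and using $g(N(Y,Z),T)=0$ gives $g(N(T,Y),Z)=g(N(T,Z),Y)$ at once. Your ``cleaner alternative'' is this argument with its one essential input missing. The relation $N(JX,Y)=-JN(X,Y)$ does not make the cyclic sum vanish; it only forces its totally skew part to be of type $(3,0)+(0,3)$. It is the LCS condition that kills it, and you never invoke it there. So ``this reduces to $g(N(Y,Z),T)$'' is unjustified as written.

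Your first route does close, but not the way you predict, and there the constant $c$ is not irrelevant. With the paper's normalization, $2g((D_XJ)U,V)=dF(X,U,V)-dF(X,JU,JV)+4g(N(U,V),JX)$. Insert this into $4g(N(T,Y),Z)=-g((D_{JY}J)T,Z)-g((D_YJ)JT,Z)$.

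The $\theta$-terms of the two pieces are $\pm\big(\|\theta\|^2g(Y,Z)-\theta(Y)\theta(Z)-\theta(JY)\theta(JZ)\big)$, and they cancel. The Nijenhuis terms, however, do \emph{not} vanish by orthogonality. They are $4g(N(T,Z),J^2Y)$ and $4g(N(JT,Z),JY)$, with $T$ in an input slot rather than the output slot, and each equals $-4g(N(T,Z),Y)$. This yields $4g(N(T,Y),Z)=4g(N(T,Z),Y)$, which is the claim.

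Had the Nijenhuis contributions vanished, you would conclude $N(T)=0$. That is false: in the paper's $\mathcal{A}_{4,1}$ example, $T=-e_3$ and $g(N(T,e_2),e_4)=g(N(T,e_4),e_2)=\tfrac14$. Likewise, with a constant $c\neq4$ the same computation gives $N(T)=\tfrac{c}{4}N(T)^{\top}$, which is also false.

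Done with the correct constant, this route is a genuine alternative to quoting Gauduchon. It derives the symmetry directly from the $DJ$ formula and $dF=\theta\wedge F$, at the price of tracking the normalization exactly.
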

 \begin{proof}
 For any almost Hermitian structure, the Koszul formula yields for vector fields $X,Y,Z$
 \begin{equation*}
 4g(N(X,Y),JZ)=dF(JX,JY,Z)-dF(X,Y,Z)+2g((D_ZJ)X,Y).
 \end{equation*}
 So $$\left(\iota_Z(dF)\right)^{J,-}=D_ZJ-2N_{JZ}.$$
 For an LCS structure, we have that $\iota_T(dF)$ is $J$-invariant (resp. $\iota_{JT}(dF)=0$ ) hence, $D_TJ=2N_{JT}=0$ (resp. $D_{JT}J=-2N_{T}=0$).
 Moreover, because $(F,\theta)$ is an LCS structure, it follows from~\cite[Proposition 1]{MR1456265}, that the Nijenhuis tensor satisfies
 $$g(N(X,Y),Z)+g(N(Y,Z),X)+g(N(Z,X),Y)=0,$$
 and so $N(T)$ is $g$-symmetric.
 \end{proof}

\begin{prop}\label{j-invariance}\cite[Proposition 2.3]{MR4698640}
Let $(M, g, J, F, \theta)$ be an LCAK manifold. Then
\begin{equation}\label{dJtheta-formula}
 dJ\theta = -\|\theta\|^2 F + \theta \wedge J\theta+2(D \theta)^{J,+}_{J\cdot,\cdot}  + 2N_{JT}.
 \end{equation}
Consequently, the vector field $T=\theta^\sharp$ is $g$-orthogonal to the image of the Nijenhuis tensor if and only if $dJ\theta$ is $J$-invariant.
\end{prop}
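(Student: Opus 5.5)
The plan is to compute $dJ\theta$ directly with the Levi-Civita connection, split the result into $J$-invariant and $J$-anti-invariant pieces, and use $dF=\theta\wedge F$ to identify the terms. I use the convention $J\theta(X)=-\theta(JX)$, so that $(J\theta)^\sharp=JT$; the signs below depend on this choice and will be fixed once at the start.

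Since $D$ is torsion free, $dJ\theta(X,Y)=(D_XJ\theta)(Y)-(D_YJ\theta)(X)$. Differentiating $J\theta=-\theta\circ J$ gives
$$(D_XJ\theta)(Y)=-(D\theta)(X,JY)-g\big((D_XJ)Y,T\big).$$
So $dJ\theta$ splits into a $D\theta$-part $-(D\theta)(X,JY)+(D\theta)(Y,JX)$ and a $DJ$-part $-g\big((D_XJ)Y-(D_YJ)X,T\big)$.

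\emph{The $D\theta$-part.} Because $\theta$ is closed, $D\theta$ is symmetric. Decompose $D\theta=(D\theta)^{J,+}+(D\theta)^{J,-}$. For a symmetric $J$-anti-invariant tensor $S$ we have $S(X,JY)=-S(JX,J^2Y)=S(JX,Y)=S(Y,JX)$, so the anti-invariant part cancels in the antisymmetrization. For the symmetric $J$-invariant part, $S(Y,JX)=-S(JY,X)$. Hence the $D\theta$-part reduces to $2(D\theta)^{J,+}(J\cdot,\cdot)$; the overall sign is to be checked against the convention.

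\emph{The $DJ$-part.} I would use the Koszul-type identity recalled in the proof of Proposition~\ref{J-parallel-T}:
$$4g(N(X,Y),JZ)=dF(JX,JY,Z)-dF(X,Y,Z)+2g((D_ZJ)X,Y).$$
This expresses $g((D_XJ)Y,T)$ through $dF$ and $N$. Substituting $dF=\theta\wedge F$ and expanding, the terms involving $\theta(T)=\|\theta\|^2$ should assemble into $-\|\theta\|^2F$, and the terms involving $\theta(X)$, $\theta(JX)$ should assemble into $\theta\wedge J\theta$. The Nijenhuis terms $g(N(\cdot,T),J\cdot)$ and similar, after antisymmetrizing in $X,Y$, should combine into $2N_{JT}$. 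For this combination I would use the cyclic identity $g(N(X,Y),Z)+\text{cyclic}=0$ valid for LCS structures (from \cite[Proposition 1]{MR1456265}, as quoted above), together with $N(JX,Y)=-JN(X,Y)$. This bookkeeping of the $\theta\wedge F$ expansion and of the Nijenhuis terms, with consistent signs and factors of $2$, is the main obstacle. I would organize it by first computing $g((D_XJ)Y,T)$ as a closed formula and only then antisymmetrizing.

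\emph{The consequence.} The forms $F$, $\theta\wedge J\theta$ and $(D\theta)^{J,+}(J\cdot,\cdot)$ are $J$-invariant. On the other hand, $N(JX,JY)=-N(X,Y)$, so $N_{JT}$ is $J$-anti-invariant. Therefore $(dJ\theta)^{J,-}=2N_{JT}$, and $dJ\theta$ is $J$-invariant iff $N_{JT}=0$. This holds iff $JT$ is orthogonal to the image of $N$, which is equivalent to $T$ being orthogonal to it, as remarked in the notation section.
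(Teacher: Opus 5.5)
Your proposal is correct, and it takes a genuinely different route from the paper's proof.

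\textbf{Your computation checks out.} You left three points to be verified, and all of them close:
\begin{itemize}
\item The $D\theta$-part comes out with a plus sign, $+2(D\theta)^{J,+}_{J\cdot,\cdot}$.
\item The Koszul identity with $(Z,X,Y)$ replaced by $(X,Y,T)$ gives $2g((D_XJ)Y,T)=4g(N(Y,T),JX)+\|\theta\|^2F(X,Y)-(\theta\wedge J\theta)(X,Y)$. This uses $dF(JY,JT,X)=0$, i.e. $\iota_{JT}(\theta\wedge F)=0$, which you should state explicitly.
\item After antisymmetrizing, the Nijenhuis terms become $2g(N(X,JT),Y)-2g(N(Y,JT),X)$. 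The cyclic identity with $Z=JT$ collapses this to $2N_{JT}(X,Y)$.
\end{itemize}

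\textbf{The paper's route.} The paper instead applies $\mathcal{L}_T$ to $F=g(J\cdot,\cdot)$:
\begin{itemize}
\item Cartan's formula, with $\iota_TF=J\theta$ and $\iota_T(\theta\wedge F)=\|\theta\|^2F-\theta\wedge J\theta$, yields $-\|\theta\|^2F+\theta\wedge J\theta$ at once.
\item $\mathcal{L}_Tg=2(D\theta)^{sym}$ yields the $D\theta$-term.
\item The skew part of the remainder $g(\mathcal{L}_TJ\cdot,\cdot)$ is identified with $2N_{JT}$. This uses the analogous computation for $JT$ (via $d\theta=0$, it shows $\mathcal{L}_{JT}J$ is $g$-symmetric), the identity $\mathcal{L}_{JT}J-J\mathcal{L}_TJ=4N(T,\cdot)$, and the same cyclic identity.
\end{itemize}

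\textbf{What each route buys.} The Lie-derivative route is shorter. It also isolates the expression for $\mathcal{L}_TF$, which the paper reuses in Proposition~\ref{lee-symplectomorphism}. Its price is that the symmetric parts of $2(D\theta)^{sym}_{J\cdot,\cdot}$ and $g(\mathcal{L}_TJ\cdot,\cdot)$, both coming from $(D\theta)^{J,-}$, must be seen to cancel. The paper's write-up handles this only implicitly. Your route antisymmetrizes from the start, so no such pieces ever appear. It is more mechanical, but fully self-contained, and it is essentially the direct computation the paper attributes to the cited reference.
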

 \begin{proof} For the sake of completeness, we give a proof here, slightly different than \cite[Proposition 2.3]{MR4698640}, where a direct computation was used (for an arbitrary almost Hermitian manifold). Applying the Lie derivative $\mathcal{L}_T$ to both sides of $F=g(J\cdot,\cdot)$ and using the Cartan formula, we have that 
 $$ dJ\theta = -\|\theta\|^2F+\theta\wedge J\theta+2\left( D\theta\right)^{sym}_{J\cdot,\cdot}+g(\mathcal{L}_TJ\cdot,\cdot) $$
 %\begin{eqnarray} \label{dJtheta-formula}
 %dJ\theta&=&-\|\theta\|^2F+\theta\wedge J\theta+2\left( D\theta\right)^{sym}_{J\cdot,\cdot}+g(\mathcal{L}_TJ\cdot,\cdot) =\\
 %&=&- \|\theta\|^2 F + \theta \wedge J\theta+2(D \theta)^{J,+}_{J\cdot,\cdot}  + 2N_{JT}.\nonumber
 %\end{eqnarray}
 So the only $J$-anti-invariant part of $dJ\theta$ is given by $\left(\mathcal{L}_TJ\right)^{skew}$. On the other hand, because $$0=d\theta=-2\left(DJ\theta\right)^{sym}_{J\cdot,\cdot}-2g(\mathcal{L}_{JT}J\cdot,\cdot),$$
 we have that $\mathcal{L}_{JT}J$ is $g$-symmetric.
 Now, recall that $\mathcal{L}_{JT}J-J\mathcal{L}_TJ=4N(T,\cdot).$ We deduce that 
 $-J\left(\mathcal{L}_TJ\right)^{skew}=4\left(N(T,\cdot)\right)^{skew}.$
Now it follows from~\cite[Proposition 1]{MR1456265} that for any vector fields $X,Y,Z$, the Nijenhuis tensor satisfies
 $$g(N(X,Y),Z)+g(N(Y,Z),X)+g(N(Z,X),Y)=0.$$
  Hence,
$ \left(N(T,\cdot)\right)^{skew}=-\frac{1}{2}N_T,$ and thus, $$- J\left(\mathcal{L}_TJ\right) = -J\left(\mathcal{L}_TJ\right)^{skew}=-2N_T \; , \mbox{ or } \left(\mathcal{L}_TJ\right) = \left(\mathcal{L}_TJ\right)^{skew} = 2N_{JT}.$$
In particular, if $T$ is $g$-orthogonal to the image of the Nijenhuis tensor, then $\left(\mathcal{L}_TJ\right)^{skew}=0,$ and so $dJ\theta$ is $J$-invariant.
 \end{proof}

\noindent The following proposition contains a computation of the Hodge Laplacian $\Delta^g=d \delta^g + \delta^g d$ of the fundamental $2$-form $F$ of an LCAK structure.

\begin{prop} \label{coexactform}
Let $(M, g, J, F, \theta)$ be a manifold of real dimension $2n$ with an LCAK structure.
Then
%\begin{equation} \label{dJtheta}
%d J\theta = 2(D \theta)^{J,+}_{J\cdot,\cdot} + \theta \wedge J\theta - \|\theta\|^2 F + 2N_{JT}.
%\end{equation}
\begin{equation} \label{ddelF}
d \delta^g F = -(n-1) d J \theta.
\end{equation}
\begin{equation} \label{deldF}
\delta^g d  F =  (\delta^g \theta ) F - 2 N_{JT} + 2(D \theta)^{J,+}_{J\cdot,\cdot} + (n-1)\, \theta \wedge J\theta. 
\end{equation}
\begin{equation} \label{DeltaF}
\Delta^g F = (n-1) \left(\frac{2}{n} \delta^g \theta + \| \theta \|^2\right) F - 2n N_{JT}  - 2(n-2) \big( (D \theta)_0 \big)^{J,+}_{J\cdot,\cdot} \; ,
\end{equation}
where $(\cdot)_{0}$ denotes the trace-free part.  

\noindent We also have
\begin{equation} \label{deldJtheta}
\delta^g d J\theta = (n-2) \big( [T, JT]^{\flat} \big)+ J d(\delta^g \theta + \|\theta\|^2) + \big( \delta^g \theta  + (n-1) \|\theta\|^2 \big)J\theta + 4\delta^g (N_{JT})\; .    
\end{equation}
\end{prop}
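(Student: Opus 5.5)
The plan is to derive all four identities from two basic ingredients. The first is the expression of $\delta^g F$ in terms of the Lee form. The second is the standard formula for the codifferential of a wedge product with a $1$-form.

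\textbf{Step 1: $\delta^g F$ and (\ref{ddelF}).} From $dF=\theta\wedge F$, contracting with $F$ (equivalently, using $\delta^g=-\ast d\ast$ together with $\ast F=F^{n-1}/(n-1)!$ and $d F^{n-1}=(n-1)\theta\wedge F^{n-1}$), one gets $\delta^g F=-(n-1)J\theta$ with the paper's conventions. Applying $d$ gives (\ref{ddelF}) immediately. It also gives $\delta^g J\theta=-\frac{1}{n-1}\delta^g\delta^g F=0$, which will be used in Step 4.

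\textbf{Step 2: (\ref{deldF}).} I would use the identity, valid for a $1$-form $\alpha$ and a form $\beta$ with a local orthonormal frame $\{e_i\}$,
\[
\delta^g(\alpha\wedge\beta)=(\delta^g\alpha)\beta-\alpha\wedge\delta^g\beta-D_{\alpha^\sharp}\beta+\sum_i e_i^\flat\wedge\iota_{D_{e_i}\alpha^\sharp}\beta ,
\]
with $\alpha=\theta$ and $\beta=F$. The four terms are handled as follows.
\begin{itemize}
\item $\theta\wedge\delta^gF=-(n-1)\theta\wedge J\theta$, by Step 1.
\item $D_TF=g((D_TJ)\cdot,\cdot)$, and the argument in the proof of Proposition~\ref{J-parallel-T} shows that $D_TJ=2N_{JT}$ for any LCAK structure. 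Indeed, $\iota_T dF=\|\theta\|^2F-\theta\wedge\iota_TF$ is $J$-invariant, so $(\iota_T dF)^{J,-}=0$; no orthogonality hypothesis on $T$ is needed here.
\item Since $D\theta$ is symmetric ($d\theta=0$), the last sum is the skew-symmetrization of $(D\theta)(\cdot,J\cdot)$. Only the $J$-invariant part of $D\theta$ survives this skew-symmetrization, which produces $2(D\theta)^{J,+}_{J\cdot,\cdot}$.
\end{itemize}
Collecting the terms yields (\ref{deldF}).

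\textbf{Step 3: (\ref{DeltaF}).} Add (\ref{ddelF}) and (\ref{deldF}), and substitute $dJ\theta$ from (\ref{dJtheta-formula}). Then decompose
\[
(D\theta)^{J,+}=\big((D\theta)_0\big)^{J,+}-\tfrac{\delta^g\theta}{2n}\,g ,
\]
using $\operatorname{tr} D\theta=-\delta^g\theta$. The $\theta\wedge J\theta$ terms cancel, and the $N_{JT}$ terms combine to $-2nN_{JT}$. The $F$-coefficients should assemble into $(n-1)\big(\tfrac2n\delta^g\theta+\|\theta\|^2\big)$. This bookkeeping, in particular the sign of $g_{J\cdot,\cdot}$ versus $F$, is the place I expect to need the most care.

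\textbf{Step 4: (\ref{deldJtheta}).} To avoid computing $\delta^g$ of $(D\theta)^{J,+}_{J\cdot,\cdot}$ directly, which would involve curvature via a Weitzenböck formula, I would use $\delta^g\delta^g=0$:
\[
\delta^g\Delta^gF=\delta^g d\delta^gF=-(n-1)\,\delta^g dJ\theta .
\]
On the other side, apply $\delta^g$ to (\ref{ddelF})+(\ref{deldF}) written in the form where $2(D\theta)^{J,+}_{J\cdot,\cdot}$ has been replaced via (\ref{dJtheta-formula}), namely
\[
2(D\theta)^{J,+}_{J\cdot,\cdot}=dJ\theta+\|\theta\|^2F-\theta\wedge J\theta-2N_{JT}.
\]
The right-hand side then contains $\delta^g dJ\theta$ with a different coefficient, plus the following computable terms.
\begin{itemize}
\item $\delta^g(fF)=f\,\delta^gF-\iota_{\operatorname{grad}f}F$, which gives $J\,df$-type terms and $-(n-1)fJ\theta$.
\item $\delta^g(\theta\wedge J\theta)=(\delta^g\theta)J\theta-(\delta^gJ\theta)\theta-[T,JT]^\flat=(\delta^g\theta)J\theta-[T,JT]^\flat$, using $\delta^gJ\theta=0$ from Step 1.
\item $\delta^g N_{JT}$, which is kept as is.
\end{itemize}
Solving the resulting linear relation for $\delta^g dJ\theta$ gives (\ref{deldJtheta}). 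The main obstacle is simply tracking signs and coefficients consistently through Steps 3--4, for instance the sign convention for $[T,JT]^\flat$ in the codifferential of a wedge of $1$-forms.
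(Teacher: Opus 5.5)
Your proposal is correct and follows essentially the paper's proof: (\ref{deldF}) comes from expanding $\delta^g(\theta\wedge F)$ with the same inputs ($\delta^g F=-(n-1)J\theta$, $D_TF=2N_{JT}$ valid for any LCAK structure, and only $(D\theta)^{J,+}$ surviving the skew-symmetrization), and your Step 4 via $\delta^g\Delta^g F=\delta^g d\delta^g F$ is just a repackaging of the paper's step of applying $\delta^g$ to the rewritten expression for $\delta^g dF$ and using $\delta^g\delta^g=0$. The only slips are cosmetic: your product rule for $\delta^g(\alpha\wedge\beta)$ has $D\alpha$ transposed in the last sum, so as written it holds only for closed $\alpha$ (harmless since $d\theta=0$); and that sum is the skew-symmetrization of $(D\theta)(J\cdot,\cdot)$ rather than of $(D\theta)(\cdot,J\cdot)$, although the result $2(D\theta)^{J,+}_{J\cdot,\cdot}$ you state is correct.
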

\begin{proof} 
Formula (\ref{ddelF}) follows from
$\delta^g F = -(n-1) J \theta $. We check (\ref{deldF}) by a direct computation in coordinates:
\begin{eqnarray*}
 (\delta^g d  F)_{ab} &=& \delta^g (\theta \wedge F)_{ab}\\
 &=& - D_i (\theta \wedge F)_{iab}\\ 
 &=& -D_i ( \theta_i F_{ab} + \theta_a F_{bi} + \theta_b F_{ia})\\
 &=& (\delta^g \theta) F_{ab} - (D_TF)_{ab} + F_{ai} (D_i \theta_b) - F_{bi} (D_i \theta_a) - \theta_a (\delta^g F)_b + \theta_b (\delta^g F)_a \\
 &=& (\delta^g \theta) F_{ab} - 2(N_{JT})_{ab} + 2 ((D \theta)^{J,+}_{J\cdot,\cdot})_{ab} + (n-1) (\theta \wedge J \theta)_{ab},
 \end{eqnarray*}
where for the last equality sign we used $D_T F = 2N_{JT}$, $\delta^g F = -(n-1) J \theta $ and
$$ F_{ai} (D_i \theta_b) - F_{bi} (D_i \theta_a) = 2 ((D \theta)^{J,+}_{J\cdot,\cdot})_{ab}. $$
The equality above can be checked directly from the definition of the $J$-invariant part
$(D \theta)^{J,+}$ of the symmetric tensor $D \theta$. Summing (\ref{ddelF}), (\ref{deldF}) and also using (\ref{dJtheta-formula}), we obtain
$$ (d \delta^g + \delta^g d)  F = \big( \delta^g \theta + (n-1)\|\theta\|^2 \big) F - 2n N_{JT} - 2(n-2) (D \theta)^{J,+}_{J\cdot,\cdot} \; .$$
We decompose the last term on the right-side in its trace (with respect to $F$) and its trace-free part
$$ (D \theta)^{J,+}_{J\cdot,\cdot} = -\frac{(\delta^g \theta)}{2n} F + ((D \theta)^{J,+}_0)_{J\cdot,\cdot} \; , $$
and with this relation (\ref{DeltaF}) follows.

%Formula (\ref{DeltaF}) follows directly from (\ref{dJtheta-formula}), (\ref{ddelF}), (\ref{deldF}). 

Finally, for formula (\ref{deldJtheta}), use (\ref{dJtheta-formula}) in formula (\ref{deldF}), to rewrite the latter as:
$$ \delta^g d  F = dJ \theta +(n-2) \theta \wedge J\theta + ( \delta^g \theta + \| \theta \|^2) F - 4N_{JT}.$$
We then apply one more time divergence on both sides of the above and also use
$$\delta^g (\theta \wedge J \theta) = (\delta^g \theta) J\theta - D_T (J\theta) + D_{JT} \theta  = (\delta^g \theta) J\theta - [T, JT]^{\flat},$$
$$ \delta^g \big[ (\delta^g \theta +\|\theta \|^2) F \big] = - J d\big( \delta^g \theta + \|\theta \|^2 \big) + (\delta^g \theta + \|\theta\|^2) \, \delta^g F,$$
equalities which could be checked by computations in coordinates. For example, for the first one
$$\delta^g (\theta \wedge J \theta)_j = - D_s \big[ (\theta \wedge J\theta)_{sj}\big] = - D_s \big[ \theta_s (J\theta)_j - \theta_j (J\theta)_s \big] =$$
$$ = -(D_s \theta_s) (J\theta)_j - \theta_s (D_s J\theta)_j + (D_s \theta)_j (J\theta)_s + \theta_j (D_s J\theta)_s ,$$
and the last term vanishes as $J\theta $ is coexact. 
\end{proof}

\noindent The following immediate corollary gives a characterization of the condition that $F$ is an eigenform of the Hodge Laplacian, that is, $\Delta^g F = \lambda F$, with $\lambda$ a constant. With this characterization, the eigenform condition for $F$ can be seen as a weak form of the pluricanonical condition introduced in the next subsection (see also Theorem \ref{compchars-plurican}).

\begin{cor} \label{F-eigenv-char}
Let $(M, g, J, F, \theta)$ be a manifold of real dimension $2n$ with an LCAK structure. 

\noindent If $2n \geq 6$, $F$ is an eigenform of the Hodge Laplacian  if and only if 
$$N_T =0, \; \; (D \theta)^{J,+}_0 = 0, \;  \mbox{and } 
\frac{2}{n} \delta^g \theta + \| \theta \|^2 \mbox{ is a constant. } $$
In dimension $2n = 4$, $F$ is an eigenform of the Hodge Laplacian if and only if 
$$N_T =0   , \;  \mbox{and }  \delta^g \theta + \| \theta \|^2 \mbox{ is a constant. } $$  
\end{cor}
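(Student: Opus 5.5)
The plan is to read the corollary directly off formula (\ref{DeltaF}) of Proposition~\ref{coexactform}:
$$\Delta^g F = (n-1)\Big(\tfrac{2}{n}\delta^g\theta+\|\theta\|^2\Big)F - 2n\,N_{JT} - 2(n-2)\big((D\theta)_0\big)^{J,+}_{J\cdot,\cdot}.$$
I would decompose the right-hand side into pieces lying in mutually $g$-orthogonal subbundles of $\Lambda^2M$. Then $\Delta^g F=\lambda F$ will hold exactly when the components off the line spanned by $F$ vanish and the coefficient of $F$ equals $\lambda$.

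\textbf{Step 1: the three pieces are pointwise orthogonal.}
\begin{itemize}
\item The first term is a function times $F$.
\item The term $N_{JT}=g(N(\cdot,\cdot),JT)$ is $J$-anti-invariant. This follows from $N(JX,JY)=-N(X,Y)$, which gives $N_{JT}(JX,JY)=-N_{JT}(X,Y)$. Hence $N_{JT}$ is orthogonal to every $J$-invariant $2$-form.
\item The tensor $((D\theta)^{J,+}_0)_{J\cdot,\cdot}$ is a $J$-invariant $2$-form. It is skew because $(D\theta)^{J,+}$ is symmetric and commutes with $J$. It is trace-free with respect to $F$, i.e.\ primitive. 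Indeed, pairing with $F$ computes the $g$-trace of $(D\theta)^{J,+}_0$, which vanishes. So it lies in the primitive $(1,1)$-part, orthogonal to $F$.
\end{itemize}
These three subspaces, $\R F$, $\Lambda^{J,-}$ and $\Lambda^{1,1}_0$, are pairwise orthogonal. Therefore $\Delta^gF=\lambda F$ is equivalent to three conditions:
$$(n-1)\Big(\tfrac{2}{n}\delta^g\theta+\|\theta\|^2\Big)=\lambda,\qquad n\,N_{JT}=0,\qquad (n-2)\big((D\theta)_0\big)^{J,+}=0.$$

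\textbf{Step 2: translate the conditions.}
\begin{itemize}
\item Since $n\ge 2$, the first condition says precisely that $\tfrac{2}{n}\delta^g\theta+\|\theta\|^2$ is constant, with $\lambda=(n-1)$ times that constant.
\item $N_{JT}=0$ is equivalent to $N_T=0$. Each says $T$, respectively $JT$, is orthogonal to the image of $N$, and the paper notes that $X\perp\operatorname{Im}N$ implies $JX\perp\operatorname{Im}N$.
\item For the third condition I must check one small notational point: the $J$-invariant part of the trace-free part equals the trace-free part of the $J$-invariant part. This holds because $g$ is $J$-invariant, so $(D\theta)^{J,+}_0=((D\theta)_0)^{J,+}$.
\item When $2n\ge 6$ we have $n-2\ne0$, so the third condition is $(D\theta)^{J,+}_0=0$.
\item When $2n=4$ we have $n=2$, so the term drops out and the first coefficient becomes $\delta^g\theta+\|\theta\|^2$.
\end{itemize}

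\textbf{Main obstacle.} The only real point is verifying the orthogonality in Step 1, specifically that $N_{JT}$ is $J$-anti-invariant and that the last term is primitive. Both are routine linear algebra.

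The converse direction is immediate. Under the stated conditions, (\ref{DeltaF}) reduces to $\Delta^gF=cF$ with $c$ constant.
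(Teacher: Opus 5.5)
Your proposal is correct and follows the paper's route: the paper presents the corollary as an immediate reading of formula (\ref{DeltaF}). Your pointwise orthogonal splitting of $2$-forms into $\R F$, primitive $J$-invariant forms and $J$-anti-invariant forms, including the checks that $N_{JT}$ is $J$-anti-invariant, that $((D\theta)^{J,+}_0)_{J\cdot,\cdot}$ is primitive $J$-invariant, and that $N_{JT}=0$ is equivalent to $N_T=0$, is exactly the justification the paper leaves implicit.
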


%\begin{proof}
%We decompose the last term on the right-side of (\ref{DeltaF}) in its %trace (with respect to $F$) and its trace-free part:
%$$ (D \theta)^{J,+}_{J\cdot,\cdot} = -\frac{(\delta \theta)}{2n} F + ((D \theta)^{J,+}_0)_{J\cdot,\cdot} \; . $$
%With this, relation (\ref{DeltaF}) is rewritten as
%$$ \Delta^g F = (n-1) \big(\frac{1}{n} \delta^g \theta + \| \theta %\|^2\big) F - 2n N_{JT}  - 2(n-2) \big( (D \theta)_0 %\big)^{J,+}_{J\cdot,\cdot} \; .$$
%Note next that if $2n \geq 6$, the condition $ \Delta^g F = \lambda F$ is equivalent with $N_T =0$ and $(D \theta)^{J,+}_0 = 0$, and $\lambda$ if $2n \geq 6$, or just $N_T =0$ if $2n =4$, are equivalent with
%$$ \Delta^g F = \lambda F \; , \mbox{ where } \; \lambda = (n-1) \Big[ \frac{1}{n} \delta^g \theta + \| \theta\|^2 \Big] \; .$$
%    
%\end{proof}

\noindent Next, we prove an integral identity.
\begin{prop} \label{intf1-lcak-prop}
Let $(M, g, J, F, \theta)$ be a compact manifold of real dimension $2n$ with an LCAK structure.
Then
\begin{equation} \label{intf1-lcak}
    0 = \int_M \Big[4 \|(D\theta)^{J,+}_{J\cdot,\cdot}\|^2 - 4\|N_{JT}\|^2 - (\delta^g \theta)^2  + 2(\delta^g \theta) \|\theta\|^2 - n\, J\theta([T,JT]) \Big]
 F^n.
\end{equation} 
Equivalently, the above relation can be rewritten as
\begin{equation} \label{intf1-lcak-v2}
    0 = \int_M \Big[2 \|(D\theta)^{J,+}_{0}\|^2 + n g \left( (D\theta)^{J,+}_{0}  , (\theta \otimes \theta  + J\theta \otimes J\theta ) \right)- 4\|N_{JT}\|^2   - (n-1) (\delta^g \theta) \, \Big(\frac{1}{n} \delta^g \theta + \|\theta \|^2 \Big) \Big]
 F^n.
\end{equation}

\end{prop}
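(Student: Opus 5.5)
The plan is to pair the Hodge Laplacian formula with $F$ itself and integrate. Since $M$ is compact, we have
\[
\int_M g(\Delta^g F, F)\,F^n = \int_M \big(\|\delta^g F\|^2 + \|dF\|^2\big)F^n .
\]
Both sides can be computed from Proposition~\ref{coexactform}, and the difference of the two computations should be exactly (\ref{intf1-lcak}).

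\textbf{The norms on the right.} From $\delta^g F = -(n-1)J\theta$ we get $\|\delta^g F\|^2=(n-1)^2\|\theta\|^2$. From $dF=\theta\wedge F$, the standard identity $\|\theta\wedge F\|^2 = \|\theta\|^2\|F\|^2 - \|\iota_T F\|^2 = (n-1)\|\theta\|^2$ holds with the paper's convention $\|F\|^2=n$.

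\textbf{The left-hand side.} Pairing (\ref{DeltaF}) with $F$, the term $N_{JT}$ is $J$-anti-invariant, so it is orthogonal to $F$. The term $((D\theta)_0)^{J,+}_{J\cdot,\cdot}$ is trace-free, so it is also orthogonal to $F$. This leaves
\[
\int_M n(n-1)\Big(\tfrac{2}{n}\delta^g\theta+\|\theta\|^2\Big)F^n .
\]
Using $\int_M\delta^g\theta\,F^n=0$, the comparison only yields the identity $n(n-1)\int\|\theta\|^2 = (n-1)n\int\|\theta\|^2$. It is a consistency check and carries no new information.

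\textbf{The actual identity.} Since the previous pairing is too weak, I would instead compute the norm of $dJ\theta$ in two ways:
\[
\int_M g(\delta^g dJ\theta, J\theta)\,F^n=\int_M \|dJ\theta\|^2 F^n .
\]
\begin{itemize}
\item \emph{The right side.} Use (\ref{dJtheta-formula}). Its summands split into the $J$-anti-invariant part $2N_{JT}$ and $J$-invariant parts. The anti-invariant part contributes $4\|N_{JT}\|^2$. The invariant parts give $\|2(D\theta)^{J,+}_{J\cdot,\cdot}\|^2$ plus cross terms with $-\|\theta\|^2F+\theta\wedge J\theta$. Those cross terms are computed from $g((D\theta)^{J,+}_{J\cdot,\cdot},F)=-\tfrac12\delta^g\theta$ and $g((D\theta)_{J\cdot,\cdot},\theta\wedge J\theta)=\pm D\theta(T,T)$-type expressions, that is, $\tfrac12 d\|\theta\|^2(T)$ and its $J$-rotated counterpart.
\item \emph{The left side.} Use (\ref{deldJtheta}) paired with $J\theta$. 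This produces $(n-2)\,J\theta([T,JT])$, $\int g(Jd(\delta^g\theta+\|\theta\|^2),J\theta)=\int(\delta^g\theta)(\delta^g\theta+\|\theta\|^2)$ after integrating by parts, $(\delta^g\theta+(n-1)\|\theta\|^2)\|\theta\|^2$, and $4\int g(N_{JT},dJ\theta)=4\int(\|N_{JT}\|^2\cdot 2\text{-type term})$, again by type decomposition.
\end{itemize}
The remaining $D\theta(T,T)$ terms become $\tfrac12\int g(d\|\theta\|^2,\theta)=\tfrac12\int\|\theta\|^2\delta^g\theta$. The quartic terms $\|\theta\|^4$ should cancel. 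They are trivially zero if $J\theta([T,JT])$ is rewritten via $[T,JT]^\flat = D_TJ\theta - D_{JT}\theta$, so that $J\theta([T,JT])=g(D_TJ\theta,J\theta)-D\theta(JT,JT)$. Collecting terms gives (\ref{intf1-lcak}).

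\textbf{The second form.} For (\ref{intf1-lcak-v2}), decompose $(D\theta)^{J,+}=-\frac{\delta^g\theta}{2n}g+(D\theta)^{J,+}_0$, so that $\|(D\theta)^{J,+}\|^2=\|(D\theta)^{J,+}_0\|^2+\frac{(\delta^g\theta)^2}{2n}$. Then express $J\theta([T,JT])$ in terms of $D\theta(T,T)+D\theta(JT,JT)$, which equals
\[
g\big((D\theta)^{J,+}_0,\theta\otimes\theta+J\theta\otimes J\theta\big)-\tfrac{1}{n}\delta^g\theta\,\|\theta\|^2
\]
up to a term $g(D_TJ\theta,J\theta)$ handled by $D_TJ=0$-type arguments. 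In general that term is $g((D_TJ)\theta,J\theta)+\tfrac12 T\|\theta\|^2$, and the $D_TJ$ part is a Nijenhuis term, zero by the $D_TF=2N_{JT}$ evaluation on $(T,JT)$. Integrating $\tfrac12 T\|\theta\|^2$ by parts gives $\tfrac12\int\|\theta\|^2\delta^g\theta$.

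The main obstacle is the bookkeeping of signs and constants in these cross terms, especially confirming that the $N$ contributions combine to exactly $-4\|N_{JT}\|^2$. If the Laplacian route gets messy, I would fall back on a direct Bochner computation: integrate $\|D\theta\|^2$ against $(\delta^g\theta)^2$ using $d\theta=0$ and Ricci identities. I expect the Laplacian route to be cleaner.
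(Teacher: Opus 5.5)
Your proposal is correct and is essentially the paper's argument. Since (\ref{deldJtheta}) is obtained by applying $\delta^g$ to (\ref{deldF}), your identity $\int_M g(\delta^g dJ\theta,J\theta)\,F^n=\int_M\|dJ\theta\|^2F^n$ becomes, after integration by parts, exactly the $L^2$-orthogonality $\int_M g(dJ\theta,\delta^g dF)\,F^n=0$ that the paper expands. Your bookkeeping then does produce (\ref{intf1-lcak}): $8\|N_{JT}\|^2$ against $4\|N_{JT}\|^2$, the cancelling $(n-1)\|\theta\|^4$ terms, and $2g((D\theta)^{J,+}_{J\cdot,\cdot},\theta\wedge J\theta)=T(\|\theta\|^2)-J\theta([T,JT])$.

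Two small points for (\ref{intf1-lcak-v2}). First, remember the factor $4\|(D\theta)^{J,+}_{J\cdot,\cdot}\|^2=2\|(D\theta)^{J,+}\|^2$ between the 2-form norm and the 2-tensor norm. Second, $g((D_TJ)T,JT)=0$ holds on any almost Hermitian manifold, because $D_TJ$ is skew and anticommutes with $J$, so no Nijenhuis argument is needed there.
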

\begin{proof}
    We use the fact that $dJ\theta$ and $\delta^g dF$ are $L^2$-orthogonal and compute using relations (\ref{dJtheta-formula}) and (\ref{deldF}) above.
\begin{eqnarray} \label{orthog-f1}
0 &= &\int_M g(dJ \theta, \delta^g dF) F^n  \\ \nonumber
&= &\int_M \Big[4 \|(D \theta)^{J,+}_{J\cdot,\cdot}\|^2 - 4\|N_{JT}\|^2 +2n\,g\left( (D \theta)^{J,+}_{J\cdot,\cdot}, \theta \wedge J\theta\right)  \\ \nonumber
&+& 2 (\delta^g \theta - \|\theta\|^2)\, g\left( (D \theta)^{J,+}_{J\cdot,\cdot}, F\right) + g \left( \theta \wedge J\theta - \|\theta\|^2 F  ,  (\delta^g \theta) F + (n-1) \theta \wedge J\theta \right)  \Big] F^n.
\end{eqnarray}   
A computation of some of the terms yields the following:
\begin{eqnarray*}
2 g\left( (D \theta)^{J,+}_{J\cdot,\cdot}, \theta \wedge J\theta\right) &=& g \left( (D \theta)^{J,+}, (\theta \otimes \theta + J\theta \otimes J\theta ) \right) \\ &=& g\left(D_T T, T\right)+ g\left(D_{JT}T,JT\right)\\
&=& T(\|\theta\|^2) - J\theta([T,JT])  \\
&=&- \delta^g(\|\theta\|^2 \theta) + (\delta^g \theta)\|\theta \|^2 - J\theta([T,JT]) ;  \\
2g\left( (D \theta)^{J,+}_{J\cdot,\cdot}, F\right) &=& g\left(D\theta, g\right) = - \delta^g \theta  ;\\
g\left(\theta \wedge J\theta - \|\theta\|^2 F  ,  (\delta^g \theta) F + (n-1)\, \theta \wedge J\theta \right) &=& - (n-1) (\delta^g \theta) \|\theta\|^2.
\end{eqnarray*}
These relations replaced in the integral above yield the relation (\ref{intf1-lcak}). For the second version of this integral formula, 
use the decomposition 
$$ (D \theta)^{J,+} = -\frac{(\delta^g \theta)}{2n} g + (D \theta)^{J,+}_0. $$
Thus,
$$ 4 \|(D\theta)^{J,+}_{J\cdot,\cdot}\|^2 = 2\|(D\theta)^{J,+}\|^2 = 2 \|(D\theta)_0^{J,+}\|^2 + \frac{ (\delta^g \theta)^2}{n},$$
where the change of coefficient after the first equal sign happens as
$(D\theta)^{J,+}$ is thought of as a 2-tensor, whereas $(D\theta)^{J,+}_{J\cdot,\cdot}$ as a 2-form, and there is a difference of a factor of 2 in their squared norms.
Also, we have 
$$   2 g\left( (D \theta)^{J,+}_{J\cdot,\cdot}, \theta \wedge J\theta\right)
= -\frac{(\delta^g \theta)}{n}\|\theta\|^2 +  g\left( (D \theta)^{J,+}_0, \theta \otimes \theta + J\theta \otimes J\theta\right). $$
We use these in the initial formula (\ref{orthog-f1}) and after some computations, we obtain the alternative expression of the integral identity 
(\ref{intf1-lcak-v2}).
\end{proof}
\noindent We will also need the following Bochner-type formulas used in the proof of main theorems (see also~\cite{MR4614394} in dimension $4$).
\begin{lem}
Let $(M,g,J,\omega)$ be an almost Hermitian manifold of real dimension $2n$, then for any $1$-form $\alpha$ and vector field $X$
\begin{equation}\label{bochner-formula}
 \left(\delta^g(D\alpha)^{J,+}-\delta^g(D\alpha)^{J,-}\right)(X)=\rho^\ast(\alpha^\sharp,JX)-(n-1)D\alpha(JT,JX)-\sum_{i=1}^{2n}D\alpha(Je_i,\left(D_{e_i}J\right)X),
 \end{equation}
 where $\delta^g$ be the adjoint of the Levi-Civita connection $D$ with respect to the metric $g,$ the vector field $T$ is the $g$-Riemannian dual of the $1$-form $(n-1)\,\theta=J\delta^g\omega$, and $\{e_i,Je_i\}$ is a local $U(n)$ frame.
\end{lem}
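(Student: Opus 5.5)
The plan is a direct computation from the definitions. The key observation is that for any $2$-tensor $\phi$ we have $\phi^{J,+}-\phi^{J,-}=\phi(J\cdot,J\cdot)$. Hence the left-hand side of (\ref{bochner-formula}) is $\delta^g B$, where
$$B(Y,X):=D\alpha(JY,JX)=(D_{JY}\alpha)(JX).$$
I will use $\delta^g B(X)=-\sum_{a=1}^{2n}(D_{e_a}B)(e_a,X)$ for an orthonormal frame $\{e_a\}=\{e_i,Je_i\}$, chosen to be $D$-parallel at the point for simplicity.

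First, differentiate $B$ with the Leibniz rule. Writing $D^2_{U,V}\alpha=D_UD_V\alpha-D_{D_UV}\alpha$, one gets
$$(D_{e}B)(Y,X)=(D^2_{e,JY}\alpha)(JX)+D\alpha\big((D_{e}J)Y,JX\big)+D\alpha\big(JY,(D_{e}J)X\big).$$
Setting $Y=e=e_a$ and summing splits $\delta^g B(X)$ into three pieces, which I treat in turn.

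\emph{(a) The second-derivative term.} The sum $-\sum_a (D^2_{e_a,Je_a}\alpha)(JX)$ is skew in the pair $(e_a,Je_a)$, since $e_a\mapsto Je_a$ maps the frame to itself up to sign. So it equals half the sum of commutators $D^2_{e_a,Je_a}-D^2_{Je_a,e_a}$, i.e. a Ricci-identity curvature term $\sum_a R_{e_a,Je_a}$ acting on $\alpha$, evaluated at $JX$. Using the convention $R_{X,Y}=D_{[X,Y]}-[D_X,D_Y]$ and $\rho^\ast=R(\omega)$ with $\omega=\sum_i e_i\wedge Je_i$, this contraction becomes $\rho^\ast(\alpha^\sharp,JX)$, possibly after a pair-symmetry $R(\omega)(\alpha^\sharp,JX)=g(R_{\cdot,\cdot}\alpha^\sharp,JX)$. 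The overall sign must be checked carefully against the stated conventions.

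\emph{(b) The trace term.} The term $-\sum_a D\alpha\big((D_{e_a}J)e_a,JX\big)$ involves $\sum_a (D_{e_a}J)e_a$, which is the dual of $\delta^g\omega$ up to $J$ and sign. With $(n-1)\theta=J\delta^g\omega$ and $T=\theta^\sharp$, this vector is a multiple of $JT$; I expect it to be $(n-1)JT$ with the correct sign. This yields $-(n-1)D\alpha(JT,JX)$.

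\emph{(c) The remaining term.} The term $-\sum_a D\alpha\big(Je_a,(D_{e_a}J)X\big)$ is already exactly the last sum in (\ref{bochner-formula}).

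Pieces (b) and (c) are routine. The main obstacle is piece (a): matching the curvature contraction exactly with $\rho^\ast(\alpha^\sharp,JX)$, with correct factors of $2$ (from summing over both $e_i$ and $Je_i$) and correct signs under the nonstandard curvature convention. I would verify this first in the flat case, where $\rho^\ast=0$, and against the Kähler case, where $\rho^\ast$ is the Ricci form, to pin down the constants.
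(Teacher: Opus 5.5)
Your proposal is correct and follows the paper's route. You rewrite the left side as the divergence of $D\alpha(J\cdot,J\cdot)$ and expand it by the Leibniz rule. The second-derivative term antisymmetrizes to $\tfrac12\sum_a g(R_{e_a,Je_a}\alpha^\sharp,JX)=\rho^\ast(\alpha^\sharp,JX)$, and the trace term uses $\sum_a (D_{e_a}J)e_a=-(\delta^g\omega)^\sharp=(n-1)JT$. The signs and constants you left to be checked do come out as stated. Since every term is tensorial, you do not need a frame that is both unitary and $D$-parallel at the point, which is impossible wherever $DJ\neq 0$.
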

\begin{proof}
 \begin{align*}
            \left(\delta^g(D\alpha)^{J,+}-\delta^g(D\alpha)^{J,-}\right)(X)&=-\sum_{i=1}^{2n}\left(D_{e_i}\left((D\alpha)^{J,+}-(D\alpha)^{J,-}\right)\right)(e_i,X)\nonumber\\
            & \begin{multlined}
                =\sum_{i=1}^{2n} -D_{e_i}\left(D_{Je_i}\alpha(JX)\right) +D\alpha(JD_{e_i}e_i,JX)\\+D\alpha(Je_i,JD_{e_i}X)
            \end{multlined} \nonumber\\
            & \begin{multlined}
                =\sum_{i=1}^{2n}-D_{e_i}\left(D_{Je_i}\alpha(JX)\right) + D\alpha(D_{e_i}(Je_i),JX) \\ + D\alpha(Je_i,D_{e_i}(JX)) - D\alpha((D_{e_i}J)e_i,JX) \\ - D\alpha(Je_i,\left(D_{e_i}J\right)X)
                \end{multlined}\nonumber\\
            & \begin{multlined}
                =\sum_{i=1}^{2n}-\left(D_{e_i}\left(D_{Je_i}\alpha\right)\right)(JX) + D\alpha(D_{e_i}(Je_i),JX) \\ - D\alpha((D_{e_i}J)e_i,JX) - D\alpha(Je_i,\left(D_{e_i}J\right)X)
            \end{multlined}\nonumber\\
            &=\sum_{i=1}^{2n}\frac{1}{2}g(R_{e_i,Je_i}\alpha^\sharp,JX)-D\alpha(Je_i,\left(D_{e_i}J\right)X)-(n-1)D\alpha(JT,JX),\nonumber\\
            &=\rho^\ast(\alpha^\sharp,JX)-(n-1)D\alpha(JT,JX)-\sum_{i=1}^{2n}D\alpha(Je_i,\left(D_{e_i}J\right)X).
        \end{align*}
\end{proof}

\begin{cor}
Let $(M, g, J, F, \theta)$ be a compact LCAK manifold. Then,
\begin{eqnarray}\label{LCS-bochner-formula}
 \int_M\|(D\theta)^{J,+}\|^2-\|(D\theta)^{J,-}\|^2\,F^n&=&\int_M\rho^\ast(T,JT)-\left(n-\frac{1}{2}\right)g(D_{JT}\theta,JT)\,F^n\nonumber\\
 %g\left(D\theta,\left(\theta\otimes \theta\right)^{J,+}\right)\nonumber\\
 &-&\int_M\frac{3}{4}\|\theta\|^2\delta^g\theta+2\int_Mg(D\theta,N(T))\,F^n;
 \end{eqnarray}
 \begin{eqnarray}\label{LCS-bochner-formula-JT}
 \int_M \|(DJ\theta)^{J,+}\|^2-\|(DJ\theta)^{J,-}\|^2\,F^n&=&\int_M\rho^\ast(T,JT)-\left(n-\frac{3}{2}\right)g(D_{JT}\theta,JT)+\frac{3}{4}\|\theta\|^2\delta^g\theta\,F^n\nonumber\\
 &+&\int_M\frac{n-1}{2}\|\theta\|^4-2g(\left(DJ\theta\right)^{sym},N(JT))\,F^n\nonumber\\
 &+&2\int_Mg(N_{JT},N(JT))\,F^n.
 \end{eqnarray}
\end{cor}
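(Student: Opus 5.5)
The plan is to apply the Bochner-type formula (\ref{bochner-formula}) to $\alpha=\theta$ and to $\alpha=J\theta$, pair it with $\alpha^\sharp$, integrate over $M$, and integrate by parts. In each case we convert the left side into the difference of the squared norms of $(D\alpha)^{J,\pm}$, and the right side into terms built from $\rho^\ast$, the Lee form, and the Nijenhuis tensor. Throughout we write $\theta_g$ for the $1$-form with $(n-1)\theta_g=J\delta^g F$. For an LCAK structure $\delta^g F=-(n-1)J\theta$ (Proposition~\ref{coexactform}), so $\theta_g=\theta$ up to the sign convention. Hence the vector field $T$ of the lemma is our $T=\theta^\sharp$, possibly up to sign, which we must track.

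\textbf{Step 1: the left side.} For $\alpha=\theta$, pair (\ref{bochner-formula}) with $X=T$ and integrate. Since $\delta^g$ is the formal adjoint of $D$,
\begin{equation*}
\int_M(\delta^g(D\theta)^{J,\pm})(T)\,F^n=\int_M g\big((D\theta)^{J,\pm},D\theta\big)F^n=\int_M\|(D\theta)^{J,\pm}\|^2F^n,
\end{equation*}
because the $J$-invariant and $J$-anti-invariant parts are orthogonal. This gives the left side of (\ref{LCS-bochner-formula}). The volume form $F^n$ is a constant multiple of the Riemannian volume, so it causes no trouble.

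\textbf{Step 2: the right side for $\alpha=\theta$.} The curvature term gives $\rho^\ast(T,JT)$. The term $-(n-1)D\theta(JT,JT)$ gives $-(n-1)g(D_{JT}\theta,JT)$. The remaining term, $-\sum_i D\theta(Je_i,(D_{e_i}J)T)$, is the main obstacle. We rewrite $(D_{e_i}J)T$ using the identity from the proof of Proposition~\ref{J-parallel-T},
\begin{equation*}
(\iota_Z dF)^{J,-}=D_ZJ-2N_{JZ},
\end{equation*}
together with $dF=\theta\wedge F$. This splits $D_ZJ$ into a part that is linear in $\theta$ and $F$ and a Nijenhuis part. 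The Nijenhuis part, after using the cyclic identity of \cite[Proposition~1]{MR1456265} and the symmetry of $D\theta$, should contribute $2g(D\theta,N(T))$. The $\theta$-linear part of $(\theta\wedge F)^{J,-}$ yields contractions of $D\theta$ with $\theta\otimes\theta$ and $J\theta\otimes J\theta$. These are $g(D_TT,T)=\tfrac12T(\|\theta\|^2)$ and $g(D_{JT}T,JT)$. Using $\int_M T(\|\theta\|^2)F^n=\int_M\|\theta\|^2\delta^g\theta\,F^n$, this produces the extra $\tfrac12 g(D_{JT}\theta,JT)$, which turns the coefficient $(n-1)$ into $(n-\tfrac12)$, and the $-\tfrac34\|\theta\|^2\delta^g\theta$ term. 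Getting these coefficients right is the delicate bookkeeping, and I would compute in a $U(n)$ frame.

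\textbf{Step 3: the case $\alpha=J\theta$.} Repeat Steps 1 and 2 with $\alpha=J\theta$ and $X=JT$. The curvature term is again $\rho^\ast(T,JT)$ by the symmetries of $\rho^\ast$; the sign needs checking. The middle term becomes $DJ\theta(JT,J^2T)$. Since $d\theta=0$, $D\theta$ is symmetric. Writing $DJ\theta=(DJ)\theta+JD\theta$ and using $D_TJ=D_{JT}J=0$ where applicable (this holds only under the hypothesis of Proposition~\ref{J-parallel-T}; in general we keep the $N_{JT}$ terms), this reduces to $g(D_{JT}\theta,JT)$ and $\|\theta\|^4$-type terms. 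Here we use the expression (\ref{dJtheta-formula}) for $dJ\theta$, whose $-\|\theta\|^2F$ and $\theta\wedge J\theta$ pieces give $\tfrac{n-1}{2}\|\theta\|^4$. The last sum again splits via $D_ZJ$ into $\theta$-terms and Nijenhuis terms. The Nijenhuis terms are $-2g((DJ\theta)^{sym},N(JT))$, plus a quadratic term $2g(N_{JT},N(JT))$ that comes from the $N$-part of $DJ\theta$ itself, namely $(D_\cdot J)\theta$ contributing $N_{J\cdot}$. Collecting terms gives (\ref{LCS-bochner-formula-JT}).
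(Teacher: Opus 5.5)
Your strategy is the paper's. You apply (\ref{bochner-formula}) with $\alpha=\theta$, $X=T$ and with $\alpha=J\theta$, $X=JT$, rewrite $D_ZJ$ through (\ref{DJ-expression}) (which is exactly your $(\iota_Z dF)^{J,-}+2N_{JZ}$ with $dF=\theta\wedge F$), and integrate. The paper writes out only the first case and calls the second similar. There is no sign ambiguity in $T$, since $J\delta^gF=-(n-1)J^2\theta=(n-1)\theta$. The route is fine, but several of your term attributions are wrong and would derail the bookkeeping if followed literally.

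\textbf{Step 2.} The $\theta$-linear part of $(D_{Je_i}J)T$ is $\frac12\big(\theta(Je_i)JT-\theta(e_i)T+\|\theta\|^2e_i\big)$. So $D\theta$ is contracted with $g$ as well as with $\theta\otimes\theta$ and $J\theta\otimes J\theta$. That trace contributes $-\frac12\|\theta\|^2\delta^g\theta$. Only together with $-\frac14T(\|\theta\|^2)$ does it integrate to $-\frac34\int_M\|\theta\|^2\delta^g\theta\,F^n$; the $\theta\otimes\theta$ contraction alone gives just $-\frac14$.

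Watch the sign of the Nijenhuis term. The sum $\sum_iD\theta(Je_i,(D_{e_i}J)T)$ contains $+2g(D\theta,N(T))$ and enters (\ref{bochner-formula}) with a minus sign, so the result is $-2\int_Mg(D\theta,N(T))\,F^n$. This is what the paper's own computation of that sum implies, and it is the sign used in the proof of Theorem~\ref{generalization-Ornea-Verbitsky}. The $+$ in the display (\ref{LCS-bochner-formula}) is a misprint, harmless in the applications where this integral is assumed to vanish, so do not force your signs to match it.

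\textbf{Step 3, the middle term.} It is exactly
\begin{equation*}
-(n-1)\,DJ\theta(JT,J^2T)=(n-1)\,g(D_{JT}(JT),T)=-(n-1)\,g(D_{JT}\theta,JT),
\end{equation*}
using only $g(JT,T)=0$. No $D_{JT}J=0$ is needed, and no $\|\theta\|^4$ arises there. If you insert (\ref{dJtheta-formula}), the $-\|\theta\|^2F$ and $\theta\wedge J\theta$ pieces give $+\|\theta\|^4$ and $-\|\theta\|^4$ at $(JT,T)$ and cancel.

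\textbf{Step 3, the last sum.} All remaining scalar terms come from here. The $\theta$-linear part of $(D_{e_i}J)JT$ is $\frac12\big(J\theta(e_i)JT+\theta(e_i)T-\|\theta\|^2e_i\big)$, and it contributes
\begin{equation*}
\tfrac14T(\|\theta\|^2)+\tfrac12g(D_{JT}\theta,JT)+\tfrac12\|\theta\|^2\sum_iDJ\theta(Je_i,e_i),\qquad \sum_iDJ\theta(Je_i,e_i)=-g(dJ\theta,F)=(n-1)\|\theta\|^2+\delta^g\theta .
\end{equation*}
This trace is where (\ref{dJtheta-formula}) actually enters. It turns $n-1$ into $n-\frac32$ and produces both $+\frac34\|\theta\|^2\delta^g\theta$ and $\frac{n-1}{2}\|\theta\|^4$. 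Your sketch accounts for none of these.

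\textbf{Step 3, the Nijenhuis part.} The $N$-component of $(D_{e_i}J)JT$ gives $-2\sum_kDJ\theta(N(JT,e_k),e_k)$. Write $\langle A,B\rangle=\sum_{k,l}A(e_k,e_l)B(e_k,e_l)$ and split $DJ\theta=(DJ\theta)^{sym}+\frac12dJ\theta$. The term becomes $-2\langle (DJ\theta)^{sym},N(JT)\rangle+\langle dJ\theta,N(JT)\rangle$. The last pairing equals $2\langle N_{JT},N(JT)\rangle$, because $\big(N(JT)\big)^{skew}=-\frac12N_{JT}$ is $J$-anti-invariant and the only $J$-anti-invariant part of $dJ\theta$ is $2N_{JT}$. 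So the quadratic term comes from the skew part of $DJ\theta$. It does not come from the whole $N$-part of $(D_\cdot J)\theta$, whose symmetric part is already inside $(DJ\theta)^{sym}$; counting it again would double count.
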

\begin{proof}
Since $(F,\theta)$ is an LCS structure, we have that
\begin{equation}\label{DJ-expression}
D_XF=\frac{1}{2}\left(X^\flat\wedge J\theta+ JX^\flat\wedge \theta\right)+2N_{JX},
\end{equation}
 where $X^\flat$ is the $g$-Riemannian dual of $X.$ Next, we apply Equation (\ref{bochner-formula}) with  $\alpha=\theta,X=T$.
 We use Equation~\ref{DJ-expression} to compute the last term
\begin{eqnarray*}
& &\sum_{i=1}^{2n} D\theta(Je_i,\left(D_{e_i}J\right)T) =\sum_{i=1}^{2n}g(D_{Je_i}\theta,\left(D_{e_i}J\right)\theta) = \\
&=&\sum_{i,k=1}^{2n}g\left(D_{Je_i}\theta,e_k\right)g\left(\left(D_{e_i}J\right)\theta,e_k\right)= \sum_{i,k=1}^{2n}-g\left(D_{e_i}\theta,e_k\right)g\left(\left(D_{Je_i}J\right)\theta,e_k\right) =\\
&=&\sum_{i,k=1}^{2n}-\frac{1}{2}g\left(D_{e_i}\theta,e_k\right)g\left(\theta(Je_i)J\theta-\theta(e_i)\theta+\|\theta\|^2e_i,e_k\right)+2g\left(D_{e_i}\theta,e_k\right)g(N(T,e_k),e_i) =\\
&=&\sum_{i,k=1}^{2n}\frac{1}{2}g\left(D_{e_i}\theta,e_k\right)g\left(J\theta(e_i)J\theta+\theta(e_i)\theta-\|\theta\|^2e_i,e_k\right)+2g\left(D_{e_k}\theta,e_i\right)g(N(T,e_k),e_i) =\\
&=&\frac{1}{2}g(D_T\theta,\theta)+\frac{1}{2}g(D_{JT}\theta,J\theta)+\frac{1}{2}\|\theta\|^2\delta^g\theta+2g(D\theta,N(T))=\\
&=&\frac{1}{4}d\left(\|\theta\|^2\right)(T)+\frac{1}{2}g(D_{JT}\theta,J\theta)+\frac{1}{2}\|\theta\|^2\delta^g\theta+2g(D\theta,N(T)).
\end{eqnarray*}
Integrating Equation (\ref{bochner-formula}) with $\alpha=\theta,X=T$, we obtain Equation~\ref{LCS-bochner-formula}.
Equation~(\ref{LCS-bochner-formula-JT}) is obtained in a similar way.

\end{proof}
\subsection{Locally conformal structures of the first kind and the pluricanonical condition}

We briefly review the definitions and some basic results due to Vaisman~\cite{MR418003}, but also see the important work of Bazzoni and Marrero \cite{MR3767426}. Let $(M, F, \theta)$ be an LCS manifold of real dimension $2n \geq 4$.
%locally conformally symplectic (LCS for short) manifold of real dimension $2n \geq 4$, meaning $F$ is a non-degenerate 2-form and $\theta$ is a %closed 1-form so that
%$$ dF = \theta \wedge F.$$
%The 1-form $\theta$ is called {\it the Lee form} of the LCS structure. It is well-known that an exact form $\theta$ corresponds to a globally %conformally symplectic structure. 
Let $V$ be the characteristic vector field of the structure, that is, the unique vector field $V$ such that 
$ \iota_V F = \theta$ (where $\iota$ is the contraction). By its definition, $V$ satisfies $\theta(V) = 0$.

In general, an infinitesimal automorphism of the LCS structure is a vector field $X$ such that $\mathcal{L}_X F = 0$. This automatically implies $\mathcal{L}_X \theta = 0$. It is immediate to check that the characteristic vector field $V$ is an infinitesimal automorphism of the LCS structure. The space of infinitesimal automorphisms of the LCS structure, $\mathfrak{X}_{(F,\theta)}$, forms a Lie sub-algebra of $\mathfrak{X}(M)$, the space of all vector fields of $M$, and $\theta :\mathfrak{X}_{(F,\theta)} \rightarrow \mathbb{R}$ induces a Lie algebra morphism, named the {\it Lee morphism} (considering $\mathbb{R}$ as an abelian Lie algebra).

The LCS structure $(F, \theta)$ is said to be {\it of the first kind} if there exists an infinitesimal automorphism $T \in \mathfrak{X}_{(F,\theta)} $ such that $\theta(T) = 1$. The LCS structure $(F, \theta)$ is said to be {\it of the second kind} if for any infinitesimal automorphism $X \in \mathfrak{X}_{(F,\theta)} $, we have $\theta(X) = 0$. In other words, the LCS structure is of the first or the second kind, depending upon whether the Lee morphism is onto or identically zero~\cite{MR809073} (as $\mathbb{R}$ is one-dimensional, these are the only options).

Assume next that $(F, \theta)$ is an LCS structure of the first kind, and let $T$ be an infinitesimal automorphism such that $\theta(T) = 1$. By the definition of $\theta$, this is equivalent to $F(V,T) = 1$. As the characteristic vector field $V$ is determined by the LCS structure $(F, \theta)$ and $T$ is an infinitesimal automorphism of the structure, we automatically have $L_T F = 0$, $L_T \theta = 0$, and also  $[T,V] = 0$. Define the 1-form $\eta$ by $\eta = -\iota_T F$. Since $\theta(T) = 1$, the condition $\mathcal{L}_T F = 0$ is equivalent to
\begin{equation} \label{F1stkind} 
F = d\eta - \theta \wedge \eta.
\end{equation}
Note that $T$ and $V$ satisfy
$$\theta(T) = 1,\quad \theta(V) = 0,\quad \eta(T) =0,\quad \eta(V) =1,$$
$$ \mathcal{L}_T \eta = \mathcal{L}_T \theta = \mathcal{L}_V \eta = \mathcal{L}_V\theta = 0, \quad \iota_T d \eta = \iota_V d\eta = 0.$$
As $F$ is non-degenerate, equation (\ref{F1stkind}) implies that $d\eta$ has rank $2n-2$ and 
$-\theta \wedge \eta \wedge (d\eta)^{n-1}$ is a volume form on $M$, equal to $F^n$, up to a positive constant. In particular, the 1-forms $\theta$ and $\eta$ are nowhere vanishing on $M$, and the same is true for the vector fields $T$ and $V$. Denote by $H$ the regular distribution $\text{Ker}(\theta) \cap \text{Ker}(\eta)$, and note the tangent bundle $TM$ splits smoothly as
$$TM = H \oplus \text{Span}(T, V).$$

\begin{defn}\label{definition-adapted} An almost complex structure $J$ is said to be {\it adapted} to an LCS structure of the first kind if it preserves the above splitting, $J\theta = - \eta$ (this is equivalent to $JV = T$), and
$d \eta(\cdot, J\cdot)$ is a positive definite metric on $H$. Equivalently,
$$ g(X, Y) := F(X, JY) = d\eta(X, JY) - (\theta \wedge \eta)(X, JY) \; $$
is a Riemannian metric on $M$ making the splitting $TM = H \oplus \text{Span}(T,V)$ orthogonal and the vectors $T, V$ orthonormal. 
\end{defn}

\begin{rem}\label{infinite-pluricanonical}
There is an infinite dimensional space of adapted almost complex structures to an LCS structure of the first kind. Indeed, $d\eta$ is a non-degenerate $2$-form on $H$ and so the space of adapted almost complex structures is equivalent to the space of almost complex structures on $H$ compatible with $d\eta$ viewed as a symplectic form.
\end{rem}

We have then
\begin{prop}\label{first-kind-characterization}
If the almost complex structure $J$ is adapted to the LCS structure of the first kind $(F, \theta,\eta)$ inducing a Riemannian metric $g$, then the image of the Nijenhuis tensor is $g$-orthogonal to Span$(T,JT),$ and $D \theta$ is $J$-anti-invariant, where $D$ is the Levi-Civita connection of $g.$
%i.e. $(M^{2n}, g, J)$ is a pluri-canonical almost complex structure.
\end{prop}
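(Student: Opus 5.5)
The plan is to deduce both conclusions from Proposition~\ref{j-invariance}, using the explicit form of $d(J\theta)$ that the first-kind structure provides.

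\textbf{Step 1: identify $\theta^\sharp$.} First I will check that $\theta^\sharp$ is exactly the infinitesimal automorphism $T$ and that $\|\theta\|=1$. By Definition~\ref{definition-adapted}, the splitting $TM=H\oplus\mathrm{Span}(T,V)$ is $g$-orthogonal and $T,V$ are orthonormal. Moreover $\theta$ vanishes on $H\subset\mathrm{Ker}(\theta)$ and on $V$, while $\theta(T)=1$. Hence $g(T,X)=\theta(X)$ for all $X$, so $\theta^\sharp=T$ and $\|\theta\|^2=1$. Thus the $T$ of the statement agrees with the $T=\theta^\sharp$ used in Propositions~\ref{J-parallel-T} and~\ref{j-invariance}.

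\textbf{Step 2: compute $dJ\theta$.} Since $J$ is adapted, $J\theta=-\eta$. Using (\ref{F1stkind}) we get
\[
dJ\theta=-d\eta=-F-\theta\wedge\eta=-F+\theta\wedge J\theta .
\]
Both $F$ and $\theta\wedge J\theta$ are $J$-invariant $2$-forms. Therefore $dJ\theta$ is $J$-invariant.

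\textbf{Step 3: the Nijenhuis condition.} By the final assertion of Proposition~\ref{j-invariance}, $J$-invariance of $dJ\theta$ implies that $T$ is $g$-orthogonal to the image of $N$. As noted in the preliminaries, orthogonality of $T$ to the image of $N$ also gives orthogonality of $JT$. So the image of $N$ is $g$-orthogonal to $\mathrm{Span}(T,JT)$, and in particular $N_{JT}=0$.

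\textbf{Step 4: anti-invariance of $D\theta$.} Substitute $\|\theta\|^2=1$ and $N_{JT}=0$ into formula (\ref{dJtheta-formula}):
\[
dJ\theta=-F+\theta\wedge J\theta+2(D\theta)^{J,+}_{J\cdot,\cdot}.
\]
Comparing with the expression obtained in Step 2 gives $(D\theta)^{J,+}_{J\cdot,\cdot}=0$. Since $J$ is invertible, this means $(D\theta)^{J,+}=0$, i.e.\ $D\theta$ is $J$-anti-invariant.

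The only delicate point is the bookkeeping of signs and conventions: $J\theta=-\eta$ versus $JV=T$, and the sign of $\theta\wedge\eta$ in (\ref{F1stkind}). Everything hinges on $dJ\theta$ coinciding exactly with the first two terms of (\ref{dJtheta-formula}). I would double-check this by evaluating both sides on the pairs $(T,V)$ and $(X,JX)$ with $X\in H$.
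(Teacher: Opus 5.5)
Your proof is correct and follows the paper's argument: both use $J\theta=-\eta$ and $\|\theta\|=1$ to rewrite (\ref{F1stkind}) as $dJ\theta=-F+\theta\wedge J\theta$, and then compare with (\ref{dJtheta-formula}). The only cosmetic difference is that you first extract $N_{JT}=0$ from the last clause of Proposition~\ref{j-invariance} and then read off $(D\theta)^{J,+}=0$. The paper does both at once, implicitly splitting the comparison into its $J$-invariant and $J$-anti-invariant parts.
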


\begin{proof} If $g$ is the induced metric, then we have
$J\theta = -\eta$ and $\|\theta\|^2=\|\eta\|^2 = 1$. Relation (\ref{F1stkind}) becomes
$$ F = -d(J\theta) + \theta \wedge J\theta.$$
Comparing this with the expression for $dJ\theta$ from relation (\ref{dJtheta-formula}), we obtain the claimed statement. 
%On the other hand, on any almost Hermitian manifold $(M, g,J, F)$ with Lee 1-form $\theta$, we have (see for example~\cite{MR4698640})
%\begin{equation}\label{formula-almost-hermitian}
% dJ\theta = 2\left(D \theta\right)^{sym,J,+}_{J\cdot,\cdot} + J(d\theta)^{J,-} + 2 N_{JT} + \theta \wedge J\theta -\|\theta\|^2 F ,
% \end{equation}
%where $T=\theta^\sharp$ the $g$-Riemannian dual of $\theta.$ Using $d\theta = 0$ and $ \|\theta\|^2 = 1$, the last two relations imply the statement of the proposition.
\end{proof}

It is natural then to introduce the following definition.
\begin{defn}\label{definition-pluricanonical}
  Let $(g, J, F, \theta)$ be an LCAK structure.
  %Let $(F,\theta)$ be an LCS structure on an almost complex manifold $(M,J)$ and suppose that $F$ is compatible with the almost complex structure $J$ %i.e. $g:=F(\cdot,J\cdot)$ is a Riemannian metric called a {\it locally conformally almost K\"ahler metric} (LCAK for short). 
  Then, $g$ is a {\it pluricanonical} LCAK metric if the image of the Nijenhuis tensor is $g$-orthogonal to Span$(T,JT)$, and $D\theta$ is $J$-anti-invariant, where $D$ is the Levi-Civita connection of the metric $g$ (here and throughout the paper $T=\theta^\sharp$, the $g$-Riemannian dual of $\theta$). 
  \end{defn}
Notice that because the metric tensor $g$ is itself $J$-invariant, the trace of $(D\theta)^{J,-}$ is always zero, so a pluricanonical LCAK metric $g$ is {\it Gauduchon}, that is $\delta^g\theta=0.$ The Gauduchon condition is often alternatively rendered \emph{canonical}, as Gauduchon \cite{MR470920} showed there is such a metric unique up to homothety in each conformal class compatible with $J$, thus the pluricanonical notion is a strengthening of this condition.

We recall that when the $2$-form $F$ of an LCS structure is compatible with an integrable almost complex structure (i.e. when the Nijenhuis tensor $N\equiv 0$), the LCS structure is called {\it locally conformally K\"ahler} (LCK for short). In the LCK setting, pluricanonical LCK metrics were introduced by Kokarev in the context of harmonic maps~\cite{MR2520354} (see also~\cite{MR2566566}). On a compact complex manifold $(M,J)$, Ornea and Verbitsky~\cite{MR4771164} and A. Moroianu and S. Moroianu~\cite{MR3674175} proved that pluricanonical LCK metrics are necessarily {\it Vaisman}, i.e. $\theta$ is parallel with respect to the Levi-Civita connection $D$. However, in the compact non-integrable case, the Lee form of a pluricanonical LCAK metric is not necessarily $D$-parallel, see Examples~\ref{Sec3}.

 \begin{prop}\label{lee-symplectomorphism}
Let $(M, g, J, F, \theta)$ be a pluricanonical LCAK manifold. Then, $\mathcal{L}_TF=0.$
 \end{prop}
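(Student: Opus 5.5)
The plan is to compute $\mathcal{L}_TF$ directly with Cartan's formula and then use the formula (\ref{dJtheta-formula}) for $dJ\theta$ from Proposition~\ref{j-invariance}. The pluricanonical hypotheses should make the right-hand side vanish identically.

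\textbf{Step 1 (Cartan's formula).} Since $dF=\theta\wedge F$, we have
$$\mathcal{L}_TF=d(\iota_TF)+\iota_T(\theta\wedge F)=d(\iota_TF)+\theta(T)\,F-\theta\wedge\iota_TF=d(\iota_TF)+\|\theta\|^2F-\theta\wedge\iota_TF .$$

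\textbf{Step 2 (identify $\iota_TF$).} I need $\iota_TF$ in terms of $J\theta$, with the sign fixed by the conventions already used in the paper. In Definition~\ref{definition-adapted} and the proof of Proposition~\ref{first-kind-characterization}, $\eta=-\iota_TF$ and $J\theta=-\eta$, which gives $\iota_TF=J\theta$. This identity is pointwise linear algebra: it only uses $F=g(J\cdot,\cdot)$ and $T=\theta^\sharp$. So the same sign holds for any LCAK structure, and I will check it once against $F=g(J\cdot,\cdot)$. Substituting into Step 1 gives
$$\mathcal{L}_TF=dJ\theta+\|\theta\|^2F-\theta\wedge J\theta .$$

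\textbf{Step 3 (apply Proposition~\ref{j-invariance}).} Formula (\ref{dJtheta-formula}) reads
$$dJ\theta=-\|\theta\|^2F+\theta\wedge J\theta+2(D\theta)^{J,+}_{J\cdot,\cdot}+2N_{JT}.$$
Under the pluricanonical hypotheses both extra terms vanish:
\begin{itemize}
\item $(D\theta)^{J,+}=0$, because $D\theta$ is $J$-anti-invariant;
\item $N_{JT}=g(N(\cdot,\cdot),JT)=0$, because the image of $N$ is orthogonal to $JT$.
\end{itemize}
Hence $dJ\theta=-\|\theta\|^2F+\theta\wedge J\theta$. Inserting this into the expression from Step 2 gives $\mathcal{L}_TF=0$.

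Note that no constancy of $\|\theta\|$ is needed, and compactness is not used.

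\textbf{Main obstacle.} The only delicate point is the sign convention in Step 2, i.e.\ whether $\iota_TF=J\theta$ or $-J\theta$. With the wrong sign the computation would give $\mathcal{L}_TF=2\|\theta\|^2F$ instead of $0$. I will fix it by the consistency check with Proposition~\ref{first-kind-characterization} described above. There, for an adapted $J$, $F=-dJ\theta+\theta\wedge J\theta$ with $\|\theta\|=1$ must be compatible with $\mathcal{L}_TF=0$, which holds exactly for $\iota_TF=J\theta$.
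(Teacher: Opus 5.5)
Your argument is correct and is essentially the paper's. The paper writes $\mathcal{L}_TF=2(D\theta)^{sym}_{J\cdot,\cdot}+g(\mathcal{L}_TJ\cdot,\cdot)$ and notes that, under the pluricanonical hypotheses, both terms are $g$-symmetric (using Proposition~\ref{j-invariance} for $\mathcal{L}_TJ$), so this $2$-form must vanish. You instead combine Cartan's formula with (\ref{dJtheta-formula}) to get the identity $\mathcal{L}_TF=2(D\theta)^{J,+}_{J\cdot,\cdot}+2N_{JT}$, valid for any LCAK structure; this is a slightly cleaner packaging of the same computation, and your sign $\iota_TF=J\theta$ is the one the paper uses in deriving (\ref{dJtheta-formula}).
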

 \begin{proof}
 %Applying the Lie derivative $\mathcal{L}_T$ to $F=g(J\cdot,\cdot)$ and using the Cartan formula, we have that 
 %$$dJ\theta=-\|\theta\|^2F+\theta\wedge J\theta+2\left(D\theta\right)^{sym}_{J\cdot,\cdot}+g(\mathcal{L}_TJ\cdot,\cdot).$$
 Since $g$ is pluricanonical LCAK, we have that $\left(D\theta\right)^{sym}_{J\cdot,\cdot}$ is $g$-symmetric. On the other hand,
 it follows from Proposition~\ref{j-invariance} that $\mathcal{L}_TJ$ is $g$-symmetric. Hence, $\mathcal{L}_TF=2\left(D\theta\right)^{sym}_{J\cdot,\cdot}+g(\mathcal{L}_TJ\cdot,\cdot)$ is $g$-symmetric. We conclude from Equation~\ref{dJtheta-formula} that $\mathcal{L}_TF=0.$
 \end{proof}
  \begin{lem}\label{constant-length}\cite{MR4771164}
Let $(M, g, J, F, \theta)$ be a pluricanonical LCAK manifold. Then, the Lee form $\theta$ has a constant length.
 \end{lem}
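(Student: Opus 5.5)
The plan is to show that $d(\|\theta\|^2)=0$ by computing $d(\|\theta\|^2)(X)=2g(D_XT,T)=2(D\theta)(X,T)$ and using the two pluricanonical hypotheses: $D\theta$ is $J$-anti-invariant, and $\operatorname{Im}N\perp\operatorname{Span}(T,JT)$. Since $\theta$ is closed, $D\theta$ is symmetric, so
$$d(\|\theta\|^2)(X)=2(D\theta)(T,X)=2g(D_T\theta,X),$$
and it suffices to show that $D_T\theta=0$.

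\textbf{Step 1: use $\mathcal{L}_T F=0$.} By Proposition~\ref{lee-symplectomorphism} we have $\mathcal{L}_TF=0$. Cartan's formula gives
$$0=\mathcal{L}_TF=d\iota_TF+\iota_T dF=d\iota_TF+\iota_T(\theta\wedge F)=d\iota_TF+\|\theta\|^2F-\theta\wedge\iota_TF.$$
Since $\iota_TF=F(T,\cdot)=\pm J\theta$ with the paper's conventions, this is exactly the relation $dJ\theta=-\|\theta\|^2F+\theta\wedge J\theta$. Comparing it with formula (\ref{dJtheta-formula}), and using $N_{JT}=0$, we obtain again that $(D\theta)^{J,+}=0$. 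This step alone does not give the length, so we need an extra ingredient.

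\textbf{Step 2: take $d$ of the relation.} Applying $d$ to $dJ\theta=-\|\theta\|^2F+\theta\wedge J\theta$ gives
$$0=-d(\|\theta\|^2)\wedge F-\|\theta\|^2\,\theta\wedge F-\theta\wedge dJ\theta.$$
Substituting $dJ\theta$ into the last term,
$$\theta\wedge dJ\theta=-\|\theta\|^2\,\theta\wedge F+\theta\wedge\theta\wedge J\theta=-\|\theta\|^2\,\theta\wedge F.$$
Hence $d(\|\theta\|^2)\wedge F=0$. Because $2n\ge4$, wedging with the nondegenerate form $F$ is injective on $1$-forms (the Lefschetz map $\Lambda^1\to\Lambda^3$ is injective for $n\ge2$). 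Therefore $d(\|\theta\|^2)=0$, and $\|\theta\|$ is locally constant, hence constant on each connected component.

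\textbf{Main obstacle.} The only delicate point is Step 1: establishing the identity $dJ\theta=-\|\theta\|^2F+\theta\wedge J\theta$ with the correct signs. Via formula (\ref{dJtheta-formula}) this identity is equivalent to the vanishing of $(D\theta)^{J,+}$ and $N_{JT}$, which is exactly the pluricanonical condition, so one can bypass Cartan's formula and read it off directly from (\ref{dJtheta-formula}). Step 2 is then a one-line exterior-derivative computation. The remaining care is the sign convention for $\iota_TF$ versus $J\theta$; I would fix it by checking (\ref{dJtheta-formula}) rather than recomputing.
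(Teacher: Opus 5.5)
Your proof is correct and follows the paper's argument. The paper also gets $dJ\theta=-\|\theta\|^2F+\theta\wedge J\theta$ from Proposition~\ref{lee-symplectomorphism}, which is equivalent to reading it off (\ref{dJtheta-formula}) with $(D\theta)^{J,+}=0$ and $N_{JT}=0$. It then applies $d$ to obtain $d(\|\theta\|^2)\wedge F=0$, and concludes by contracting with $F$, which amounts to your injectivity of $\alpha\mapsto\alpha\wedge F$ for $n\ge 2$. Your opening reduction to showing $D_T\theta=0$ is never used and can be dropped.
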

 \begin{proof}
 It follows from Proposition~\ref{lee-symplectomorphism} that $$dJ\theta=-\|\theta\|^2F+\theta\wedge J\theta.$$
 We apply the exterior derivative to obtain 
\begin{equation*}
0=-\left(d\|\theta\|^2\right)\wedge F-\|\theta\|^2\theta\wedge F-\theta\wedge dJ\theta=-\left(d\|\theta\|^2\right)\wedge F-\|\theta\|^2\theta\wedge F+\|\theta\|^2\theta\wedge F=-\left(d\|\theta\|^2\right)\wedge F.
\end{equation*}
Contracting with $F,$ we get that $\|\theta\|$ is constant.
 \end{proof}
 
\begin{thm}\label{equivalence-first_kind-pluri}
  Let $(M, g, J, F, \theta)$ be an LCAK manifold. Then, the following are equivalent:
\begin{enumerate}[label=\arabic*)]
\item $(F,\theta)$ is of the first kind and $J$ is adapted to $(F,\theta).$\\
\item The metric $g$ is a pluricanonical LCAK metric with a non-zero Lee form $\theta.$
\end{enumerate}
\end{thm}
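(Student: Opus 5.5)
The implication 1) $\Rightarrow$ 2) is essentially Proposition~\ref{first-kind-characterization}. If $J$ is adapted to a structure of the first kind, that proposition gives that the image of $N$ is $g$-orthogonal to Span$(T,JT)$ and that $D\theta$ is $J$-anti-invariant, so $g$ is pluricanonical. The Lee form is non-zero because $\theta(T)=1$ for the automorphism $T$. Equivalently, $\|\theta\|^2=1$ for the induced metric, since the adapted $J$ makes $T,V$ orthonormal and $T=\theta^\sharp$. One point to check is that the infinitesimal automorphism $T$ of Definition~\ref{definition-adapted} really coincides with $\theta^\sharp$. This holds because $g(T,\cdot)=F(T,J\cdot)=-\eta(J\cdot)=\theta$, using $J\theta=-\eta$.

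For 2) $\Rightarrow$ 1), I start from a pluricanonical LCAK structure with $\theta\not\equiv 0$. By Lemma~\ref{constant-length}, $\|\theta\|$ equals a constant $c$, and $c>0$ since $\theta\neq 0$. I set $\tilde T := T/c^2$, so that $\theta(\tilde T)=1$. By Proposition~\ref{lee-symplectomorphism}, $\mathcal{L}_TF=0$, and since $c$ is constant also $\mathcal{L}_{\tilde T}F=0$. Hence $\tilde T$ is an infinitesimal automorphism with $\theta(\tilde T)=1$, and the structure is of the first kind. Next, $\eta=-\iota_{\tilde T}F=-c^{-2}F(T,\cdot)$. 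Since $F(T,X)=g(JT,X)=-g(T,JX)=-\theta(JX)=(J\theta)(X)$ with the convention $J\theta=-\theta\circ J$, this gives $\eta=-c^{-2}J\theta$.

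The obstacle is that Definition~\ref{definition-adapted} requires $J\theta=-\eta$ exactly, which forces $c=1$. So I rescale. Replacing $F$ by $c^2F$ and $g$ by $c^2g$ keeps $\theta$ unchanged, keeps the LCS equation $dF=\theta\wedge F$, and keeps $J$. It also preserves the pluricanonical conditions: orthogonality to the image of $N$ is conformally invariant, and $D$ is unchanged under a constant rescaling. The new norm is $\|\theta\|_{c^2g}^2=c^{-2}\|\theta\|^2_g=1$. After this rescaling $c=1$, $\tilde T=T=\theta^\sharp$ and $\eta=-J\theta$. If instead the statement is read for the given $F$, I would let the automorphism $T'$ be $\tilde T$ and accept that ``adapted'' holds up to this normalization. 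I would remark on this in the write-up.

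With $\|\theta\|=1$, the remaining task is to verify adaptedness. The characteristic field is $V$ with $\iota_VF=\theta$. From $F(JT,X)=g(J^2T,JX)\cdot$ (up to sign) one checks that $V=-JT$ or $JT$, and I would fix the sign so that $JV=T$. Next, $J$ preserves Span$(T,JT)$, hence also its $g$-orthogonal complement. That complement is $H=\ker\theta\cap\ker\eta$, because $\ker\eta=\ker J\theta=(JT)^\perp$. Finally, by Proposition~\ref{lee-symplectomorphism} and formula (\ref{dJtheta-formula}), $dJ\theta=-F+\theta\wedge J\theta$, so $d\eta=F-\theta\wedge J\theta$. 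Restricted to $H$ this is just $F|_H$, and therefore $d\eta(\cdot,J\cdot)=g$ is positive definite on $H$. This gives (\ref{F1stkind}) and completes the proof that $J$ is adapted.
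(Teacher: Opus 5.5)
Your proof is correct and follows the paper's route: constant length of $\theta$ from Lemma~\ref{constant-length}, $\mathcal{L}_TF=0$ from Proposition~\ref{lee-symplectomorphism}, then $\eta=-J\theta$. Your normalization remark sharpens the paper's proof, because condition 1) forces $\|\theta\|=1$ ($T$ is a $g$-unit vector orthogonal to $H$ and $V$, so $\theta=g(T,\cdot)$), hence 2) $\Rightarrow$ 1) only holds after the homothety $(F,g)\mapsto(c^2F,c^2g)$, which the paper's two-line proof uses silently. One small fix: the sign of $V$ is not a choice, since $\iota_VF=g(JV,\cdot)=\theta$ forces $JV=\theta^\sharp$, i.e.\ $V=-JT$, which is exactly the sign adaptedness requires.
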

\begin{proof}
Since the implication $1)\rightarrow 2)$ follows from Proposition~\ref{first-kind-characterization}, we only need to prove $2)\rightarrow 1)$. 
%It follows from Proposition~\ref{lee-symplectomorphism} that $$dJ\theta=-\|\theta\|^2F+\theta\wedge J\theta.$$ 
We have from Lemma~\ref{constant-length} that $\|\theta\|$ is of constant length. We deduce from Proposition~\ref{lee-symplectomorphism} that $J$ is adapted to the LCS structure with $\eta=-J\theta.$
\end{proof}

\subsection{Properties of pluricanonical LCAK metrics}

First, we note that on a pluricanonical LCAK manifold, the distribution Span$(T,JT)$ is integrable, and its integral leaves are totally geodesic. More precisely, we have that

\begin{prop}
Let $(M, g, J, F, \theta)$ be a pluricanonical LCAK manifold. Then, $$D_T\theta=D_{JT}\theta=D_{T}J\theta=D_{JT}J\theta=[T,JT]=0.$$
\end{prop}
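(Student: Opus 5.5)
The plan is to get everything from three ingredients already established: $\|\theta\|$ is constant (Lemma~\ref{constant-length}), $D\theta$ is a symmetric $J$-anti-invariant tensor, and $D_TJ=D_{JT}J=0$ (Proposition~\ref{J-parallel-T}). The last one applies because the image of the Nijenhuis tensor is orthogonal to $T$ by the pluricanonical assumption. No integration or compactness is needed; the argument is pointwise.

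\emph{Step 1: $D_T\theta=0$.} Since $d\theta=0$, the tensor $D\theta$ is symmetric. For any vector field $X$,
\[
(D\theta)(T,X)=(D\theta)(X,T)=g(D_XT,T)=\tfrac12 X(\|\theta\|^2)=0,
\]
using that $\|\theta\|$ is constant. Hence $D_T\theta=0$.

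\emph{Step 2: $D_{JT}\theta=0$.} By $J$-anti-invariance, $(D\theta)(JT,JX)=-(D\theta)(T,X)=0$ for all $X$. As $X$ ranges over all vector fields, so does $JX$, and therefore $D_{JT}\theta=0$.

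\emph{Step 3: $D_TJ\theta=D_{JT}J\theta=0$.} Write $D_Y(J\theta)=(D_YJ)\theta+J(D_Y\theta)$ (with the obvious identification of $J$ acting on $1$-forms). Take $Y=T$ and $Y=JT$. The first term vanishes by Proposition~\ref{J-parallel-T}, and the second vanishes by Steps 1 and 2.

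\emph{Step 4: $[T,JT]=0$.} Since $D$ is torsion-free,
\[
[T,JT]=D_T(JT)-D_{JT}T,
\]
and both terms are zero by Steps 1–3 (after dualizing with $g$).

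The only point that needs care is the sign conventions when passing between $J$ acting on vectors and on $1$-forms in Step 3. This is harmless here, because every term vanishes separately.
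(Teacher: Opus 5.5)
Your proof is correct and follows essentially the same route as the paper. Like the paper, you combine the constant length of $\theta$ (Lemma~\ref{constant-length}) with the symmetry and $J$-anti-invariance of $D\theta$ to get $D_T\theta=D_{JT}\theta=0$, then use $D_TJ=D_{JT}J=0$ (Proposition~\ref{J-parallel-T}) to pass to $J\theta$. Your Step 4, which deduces $[T,JT]=D_T(JT)-D_{JT}T=0$ from torsion-freeness, spells out a step the paper leaves implicit.
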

\begin{proof}
For any vector field $X$, using Lemma~\ref{constant-length}, we have
$$D_{T}\theta(X)=D_X\theta(T)=g(D_X\theta,\theta)=\frac{1}{2}d\|\theta\|^2(X)=0,$$
$$D_{JT}\theta(X)=D_{T}\theta(JX)=D_{JX}\theta(T)=g(D_{JX}\theta,\theta)=\frac{1}{2}d\|\theta\|^2(JX)=0.$$
Moreover, using Proposition~\ref{J-parallel-T}, we compute
$$D_{T}J\theta(X)=JD_{T}\theta(X)=-D_{T}\theta(JX)=-D_{JX}\theta(T)=-g(D_{JX}\theta,\theta)=-\frac{1}{2}d\|\theta\|^2(JX)=0,$$
$$D_{JT}J\theta(X)=JD_{JT}\theta(X)=-D_{JT}\theta(JX)=-D_{JX}\theta(JT)=g(D_{X}\theta,\theta)=\frac{1}{2}d\|\theta\|^2(X)=0.$$

\end{proof}

It was shown in~\cite{angella2023notecompatibilityspecialhermitian} that a compact complex non-K\"ahler manifold cannot admit
a Vaisman metric and a balanced metric (meaning a metric whose induced 2-form is coclosed). The following is a generalization of that result to the non-integrable case.

\begin{thm}\label{main-thm-1}
Let $(M,F,\theta)$ be a compact manifold of real dimension $2n$ with an LCS structure of the first kind.
%which is not globally conformally symplectic. 
Let $J$ be an adapted almost complex structure for $(F, \theta)$. Then, there is no non-degenerate $2$-form $\omega$ compatible with $J$ such that $d\omega^{n-1}=0.$ In particular, there is no symplectic form $\omega$ compatible with $J$.
 \end{thm}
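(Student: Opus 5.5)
The plan is to test the closed form $\omega^{n-1}$ against the exact form $d\eta$ from \eqref{F1stkind}, and to show that the pointwise integrand has a sign that forbids the integral from vanishing. Suppose, for contradiction, that $\omega$ is a non-degenerate $2$-form compatible with $J$ with $d\omega^{n-1}=0$. Then Stokes' theorem on the compact manifold $M$ gives
$$\int_M d\eta\wedge\omega^{n-1}=\int_M d\big(\eta\wedge\omega^{n-1}\big)=0 .$$

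The main step is to show that $d\eta\wedge\omega^{n-1}$ is a strictly positive multiple of the volume form $\omega^n$ at every point. By \eqref{F1stkind} and $J\theta=-\eta$ (Definition~\ref{definition-adapted}), we have
$$d\eta=F+\theta\wedge\eta=F-\theta\wedge J\theta .$$
\begin{itemize}
\item \emph{$J$-invariance.} $F$ is $J$-invariant, and $\theta\wedge J\theta$ is $J$-invariant (it is essentially the Kähler-type form of the complex line $\mathrm{Span}(T,JT)$). Hence $d\eta$ is $J$-invariant.
\item \emph{Values on the splitting.} By Definition~\ref{definition-adapted}, $H$ and $\mathrm{Span}(T,V)=\mathrm{Span}(T,JT)$ are $J$-stable, and $\iota_Td\eta=\iota_Vd\eta=0$. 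So $d\eta$ vanishes on $\mathrm{Span}(T,V)$ and has no mixed terms. For $X\in H$ we get $d\eta(X,JX)=F(X,JX)=g(X,X)$.
\end{itemize}
Thus $d\eta$ is a $J$-invariant, positive semi-definite real $(1,1)$-form whose rank is exactly $2n-2\geq 2$.

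Next I use the standard linear-algebra fact for a $J$-invariant $2$-form $\alpha$:
$$\alpha\wedge\omega^{n-1}=c_n\,(\Lambda_\omega\alpha)\,\omega^n,\qquad c_n>0,$$
where $\Lambda_\omega\alpha=\sum_i\alpha(e_i,Je_i)$ is computed in an orthonormal $U(n)$-frame $\{e_i,Je_i\}$ for the metric $\omega(\cdot,J\cdot)$. For $\alpha=d\eta$ each term $d\eta(e_i,Je_i)$ is $\geq 0$ by semi-positivity. Since $d\eta\neq0$ everywhere, not all terms vanish, because a semi-positive $J$-invariant form with zero trace against a positive definite Hermitian form is zero. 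Hence $\Lambda_\omega d\eta>0$ pointwise.

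Since $\omega^n$ has constant sign on $M$ (for instance, orient $M$ by $\omega^n$), this gives
$$\int_M d\eta\wedge\omega^{n-1}>0,$$
contradicting the vanishing obtained from Stokes' theorem. A symplectic $\omega$ satisfies $d\omega^{n-1}=0$, which yields the final assertion.

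I expect the only delicate point to be the sign and positivity bookkeeping. One has to check that the orientation conventions in $g(X,Y)=F(X,JY)$ make $\omega(X,JX)>0$ and $d\eta(X,JX)\geq 0$ with the same sign. One also has to justify that the trace of a nonzero semi-positive form against an arbitrary positive compatible $\omega$ is positive. I would settle the latter by simultaneously diagonalizing $d\eta$ and $\omega$ at a point in a $U(n)$-frame.
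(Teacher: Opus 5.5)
Your proof is correct and is essentially the paper's argument. Since $\eta=-J\theta$ and $\|\theta\|=1$, your form $d\eta=F-\theta\wedge J\theta$ is exactly the paper's semi-positive form $-dJ\theta=\|\theta\|^2F-\theta\wedge J\theta$; the paper likewise integrates it against $\omega^{n-1}$, gets zero from $d\omega^{n-1}=0$, and concludes by positivity. The only differences are that you read off the semi-positivity directly from Definition~\ref{definition-adapted} and \eqref{F1stkind} rather than passing through Theorem~\ref{equivalence-first_kind-pluri}, and that you spell out the trace argument for strict positivity of the integrand, which the paper leaves implicit.
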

 \begin{proof}
 Since $J$ is adapted, the induced metric $g:=F(\cdot,J\cdot)$ is pluricanonical LCAK by Theorem~\ref{equivalence-first_kind-pluri}, and we have that $$dJ\theta=-\|\theta\|^2F+\theta\wedge J\theta,$$
 with $\|\theta\|$ a constant. In particular, $-dJ\theta$ is positive semi-definite. 
 Now, we suppose the existence of a non-degenerate $2$-form $\omega$ compatible with $J$ such that $d\omega^{n-1}=0.$ It follows that 
 \begin{eqnarray*}
 \int_MdJ\theta\wedge\omega^{n-1}&=&\int_M J\theta\wedge d\omega^{n-1}=0.
 \end{eqnarray*}
 This is a contradiction with the fact that $-dJ\theta\geq0$ unless $dJ\theta=0,$ requiring $\theta=0.$ But this cannot be true for an LCS structure of the first kind.
 \end{proof}

 The next result gives alternative characterizations of the pluricanonical condition on a compact manifold with a LCAK structure.

 \begin{thm} \label{compchars-plurican}
 Let $(M, g, J, F, \theta)$ be a compact manifold of dimension $2n$ with an LCAK structure that is not almost K\"ahler (i.e. $\theta$ is not identically zero). The following statements are equivalent:

 (i) The structure $(g, J, F, \theta)$ is pluricanonical; 

 (ii) $g$ is Gauduchon, $T$ is orthogonal to the image of the Nijenhuis tensor, and $J\theta([T,JT]) = 0$;

 (iii) The trace-free part of $D \theta$ is $J$-anti-invariant and 
 $\| \theta \|$ is a constant.

\noindent Moreover, if $2n \geq 6$, the above are also equivalent with

(iv) $F$ is an eigenform of the Hodge Laplacian.
 \end{thm}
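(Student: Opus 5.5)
The plan is to prove (i)$\Rightarrow$(ii)$\Rightarrow$(i), then (i)$\Leftrightarrow$(iii), and finally (i)$\Leftrightarrow$(iv) when $2n\ge 6$. The main tool is the integral identity of Proposition~\ref{intf1-lcak-prop}, together with Corollary~\ref{F-eigenv-char}.

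\emph{(i)$\Rightarrow$(ii), (iii), (iv).} Assume (i). Then $g$ is Gauduchon, since the trace of $(D\theta)^{J,-}$ vanishes. The orthogonality of $T$ to the image of $N$ is part of the definition. The proposition after Theorem~\ref{equivalence-first_kind-pluri} gives $[T,JT]=0$, which yields (ii). Lemma~\ref{constant-length} gives that $\|\theta\|$ is constant, and $(D\theta)^{J,+}=0$ trivially, which yields (iii). For (iv), $N_T=0$ and $N_{JT}=0$, $(D\theta)^{J,+}_0=0$ and $\delta^g\theta=0$. Formula (\ref{DeltaF}) then reduces to $\Delta^g F=(n-1)\|\theta\|^2F$ with constant eigenvalue. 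This direction holds in any dimension.

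\emph{(ii)$\Rightarrow$(i).} Plug $\delta^g\theta=0$, $N_{JT}=0$ and $J\theta([T,JT])=0$ into (\ref{intf1-lcak}). It collapses to $\int_M 4\|(D\theta)^{J,+}_{J\cdot,\cdot}\|^2F^n=0$. Hence $(D\theta)^{J,+}=0$, which is (i).

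\emph{(iii)$\Rightarrow$(i).} With $(D\theta)^{J,+}_0=0$ and $\|\theta\|$ constant, I use (\ref{intf1-lcak-v2}). The second term drops, leaving
$$0=-\int_M\Big[4\|N_{JT}\|^2+(n-1)(\delta^g\theta)\big(\tfrac1n\delta^g\theta+\|\theta\|^2\big)\Big]F^n .$$
Since $\|\theta\|^2=c$ is constant and $\int_M\delta^g\theta\,F^n=0$, this becomes
$$0=\int_M\Big[4\|N_{JT}\|^2+\tfrac{n-1}{n}(\delta^g\theta)^2\Big]F^n.$$
Therefore $N_{JT}=0$ (equivalently $N_T=0$, i.e.\ $T$ is orthogonal to the image of $N$) and $\delta^g\theta=0$. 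Then $(D\theta)^{J,+}=-\frac{\delta^g\theta}{2n}g+(D\theta)^{J,+}_0=0$, which is (i).

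\emph{(iv)$\Rightarrow$(i) for $2n\ge6$.} By Corollary~\ref{F-eigenv-char}, (iv) gives $N_T=0$, $(D\theta)^{J,+}_0=0$, and $f:=\frac2n\delta^g\theta+\|\theta\|^2$ is a constant $c$. To reach (iii) I must show $\delta^g\theta=0$; this is the main obstacle. First I integrate: $c\,\mathrm{Vol}=\int_M\|\theta\|^2F^n>0$, so $c>0$. Next I insert $\|\theta\|^2=c-\frac2n\delta^g\theta$ into (\ref{intf1-lcak-v2}). The term $(n-1)\delta^g\theta(\frac1n\delta^g\theta+\|\theta\|^2)$ becomes $(n-1)\delta^g\theta(c-\frac1n\delta^g\theta)$, whose integral is $-\frac{n-1}{n}\int(\delta^g\theta)^2$. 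The identity then reads
$$0=\int_M\tfrac{n-1}{n}(\delta^g\theta)^2F^n ,$$
which forces $\delta^g\theta=0$. Hence $\|\theta\|^2=c$ is constant and (iii) holds. I must check the sign carefully at this step, since it is the decisive one; if the sign turned out unfavorable, I would try instead combining it with (\ref{intf1-lcak}), using $N_{JT}=0$.
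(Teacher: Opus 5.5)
Your proposal is correct and follows essentially the same route as the paper: the forward implications come from the definition, Lemma~\ref{constant-length}, the vanishing of $[T,JT]$ and Corollary~\ref{F-eigenv-char} (equivalently (\ref{DeltaF})), while (ii)$\Rightarrow$(i), (iii)$\Rightarrow$(i) and (iv)$\Rightarrow$(i) are read off from the integral identities (\ref{intf1-lcak}) and (\ref{intf1-lcak-v2}). Your sign in the last step is right and yields $0=\frac{n-1}{n}\int_M(\delta^g\theta)^2F^n$ exactly as in the paper, so no fallback is needed, and the observation $c>0$ is not used anywhere.
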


 \begin{proof}
     The implication (i) $ \Rightarrow$ (ii) follows from the definition of a pluricanonical LCAK structure and Theorem \ref{equivalence-first_kind-pluri}, which guaranties $[T, JT] = 0$, as $(F,\theta)$ is an LCS structure of the first kind. The implication (i) $ \Rightarrow$ (iii) is also clear from the definition and some of the properties of pluricanonical structures that we established above. Indeed, $(D \theta)^{J,+} = 0$ trivially implies $(D \theta)^{J,+}_0 = 0$ and $\delta^g \theta = 0$. From Proposition \ref{lee-form} it also follows that $\| \theta \|$ is a constant. The implication (ii) $ \Rightarrow$ (i) follows directly from relation (\ref{intf1-lcak}) of Proposition \ref{intf1-lcak-prop}, while the implication (iii) $ \Rightarrow$ (i) follows from relation (\ref{intf1-lcak-v2}) of Proposition \ref{intf1-lcak-prop}. Note that under the assumptions of (iii), the integral identity (\ref{intf1-lcak-v2}) takes the form
     $$ 0 = \int_M \Big[- 4\|N_{JT}\|^2   - \frac{n-1}{n}(\delta^g \theta )^2 \Big] F^n,$$
     so it follows that $\delta^g \theta = 0$ and $N_T = 0$. Combined with $(D \theta)^{J,+}_0 = 0$, this implies that the structure is pluricanonical. 

     The implication (i) $ \Rightarrow$ (iv) holds (in all dimensions) by the definition of the pluricanonical condition and Corollary \ref{F-eigenv-char}. Assume next that $2n \geq 6$ and that $F$ is an eigenform of the Hodge Laplacian. By Corollary \ref{F-eigenv-char}, $F$ being an eigenform of the Hodge Laplacian is equivalent with 
     $$N_T =0, \; \; (D \theta)^{J,+}_0 = 0, \;  \mbox{and } 
       \frac{2}{n} \delta^g \theta + \| \theta \|^2 \mbox{ is a constant. } $$
     With these conditions, the integral formula (\ref{intf1-lcak-v2}) of Proposition \ref{intf1-lcak-prop}  becomes
     $$ 0 = \int_M \Big[- (n-1) (\delta^g \theta) \, \Big(\frac{1}{n} \delta^g \theta + \|\theta \|^2 \Big) \Big] F^n = \int_M -(n-1) (\delta^g \theta) \Big(\frac{2}{n} \delta^g \theta + \|\theta \|^2 - \frac{1}{n} \delta^g \theta \Big)  F^n, $$
     so, finally using that $\frac{2}{n} \delta^g \theta + \| \theta \|^2$ is a constant, we get
     $$0 = \frac{n-1}{n} \int_M (\delta^g \theta)^2 \; F^n.$$
     Thus, we get that the metric must be Gauduchon, i.e. $\delta^g \theta = 0$. Together with 
     $(D \theta)^{J,+}_0 = 0$, this implies $(D \theta)^{J,+}= 0$, and, as $N_T=0$ holds, the metric is pluricanonical.
 \end{proof}

% The next three corollaries give alternative characterizations of the %pluricanonical condition on a compact manifold with a LCAK structure. 
 %We first observe the following.

%\begin{cor}\label{eigenform}
%Let $(M, g, J, F, \theta)$ be a compact manifold of real dimension $2n$ %with a Gauduchon LCAK structure. In dimension $2n \geq 6$, the metric $g$ %is pluricanonical LCAK if and only if $F$ is an eigenform of the Hodge %Laplacian. In dimension $2n=4$, the pluricanonical condition on $g$ %implies the eigenform property of $F.$
%\end{cor}
%\begin{proof}
%This is a direct consequence of Proposition~\ref{coexactform}. 
%\end{proof}

%The following is another characterization of the pluricanonical condition.
%\begin{cor}\label{pluricanonical-condition}
%Let $(M, g, J, F, \theta)$ be a compact manifold of real dimension $2n$ %with a Gauduchon LCAK structure and assume that $T$ is orthogonal to the %image of the Nijenhuis tensor. Then the metric $g$ is pluricanonical LCAK %if and only if $J\theta\left([T,JT]\right) = 0.$
%\end{cor}
%\begin{proof}
%    Follows directly from relation (\ref{intf1-lcak}) of Proposition %\ref{intf1-lcak-prop}.
%\end{proof}

\noindent A similar characterization can be obtained for unimodular Lie algebras.  
\begin{thm}\label{lie-algebra-pluricanonical}
Let $\mathfrak{g}$ be a unimodular Lie algebra of real dimension $2n$ equipped with an LCAK structure $(g, J, F, \theta)$. Then the following are equivalent:

(i) The structure is pluricanonical; 

(ii) $T=\theta^\sharp$ is $g$-orthogonal to the image of the Nijenhuis tensor, and $J\theta\left([T,JT]\right) = 0$. 

\noindent Moreover, if $2n \geq 6$, the above are also equivalent with 

(iii) $F$ is an eigenform of the Hodge Laplacian.
\end{thm}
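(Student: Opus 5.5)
The plan is to mimic the proof of Theorem~\ref{compchars-plurican}. The one ingredient that used compactness there was the $L^2$-orthogonality of $dJ\theta$ and $\delta^g dF$, i.e.\ integration by parts. On a unimodular Lie algebra I will replace it by the algebraic statement that exact left-invariant forms are $L^2$-orthogonal to coclosed ones.

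First I check that the pointwise formulas hold verbatim for left-invariant tensors on $\mathfrak g$. These are (\ref{dJtheta-formula}), (\ref{ddelF}), (\ref{deldF}), (\ref{DeltaF}) and Corollary~\ref{F-eigenv-char}, all of which are local.

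Next I replace integration by evaluation at the identity. For a left-invariant $1$-form $\alpha$ one has $\delta^g\alpha = -\tr(\ad_{\alpha^\sharp})$ up to sign. Unimodularity therefore gives $\delta^g\alpha=0$ for every left-invariant $1$-form. Two consequences follow. First, the metric is automatically Gauduchon, $\delta^g\theta=0$. Second, every divergence term appearing in the proof of Proposition~\ref{intf1-lcak-prop}, such as $\delta^g(\|\theta\|^2\theta)$, vanishes identically. Moreover $\|\theta\|$ is constant, so $T(\|\theta\|^2)=0$.

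I then need the analogue of $\int g(dJ\theta,\delta^g dF)=0$. On a unimodular Lie algebra, $\delta^g$ restricted to left-invariant forms is the algebraic adjoint of the Chevalley--Eilenberg differential $d$ with respect to the inner product on $\Lambda\mathfrak g^*$. This is the standard fact that the divergence term $\tr\ad$ drops out. Hence
\[
g(dJ\theta,\delta^g dF)=g(d\,dJ\theta,dF)=0
\]
pointwise. Running the computation of Proposition~\ref{intf1-lcak-prop} without integrals, and using $\delta^g\theta=0$, gives the algebraic identity
\[
0=4\|(D\theta)^{J,+}_{J\cdot,\cdot}\|^2-4\|N_{JT}\|^2-n\,J\theta([T,JT]).
\]
I expect this adjointness step to be the main obstacle. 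It must be justified carefully, for instance via the formula $\delta^g=\sum_i -\iota_{e_i}D_{e_i}$ together with $\sum_i D_{e_i}e_i=-(\tr\ad)^\sharp=0$.

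With the identity in hand, the equivalences go as follows.

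(i)$\Rightarrow$(ii). By definition $N_T=0$. Also $[T,JT]=0$ from the proposition preceding Theorem~\ref{main-thm-1}, which is purely local.

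(ii)$\Rightarrow$(i). Under (ii) the identity reduces to $(D\theta)^{J,+}=0$, which is exactly the pluricanonical condition.

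(i)$\Rightarrow$(iii). This follows from Corollary~\ref{F-eigenv-char}, since $\delta^g\theta=0$ and $\|\theta\|$ is constant.

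(iii)$\Rightarrow$(i), for $2n\geq 6$. Corollary~\ref{F-eigenv-char} gives $N_T=0$ and $(D\theta)^{J,+}_0=0$. Since $\delta^g\theta=0$ by unimodularity, the trace part also vanishes, so $(D\theta)^{J,+}=0$.

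Note that in the Lie algebra case I do not even need the full identity for this last direction. Unimodularity alone supplies the Gauduchon condition, which on compact manifolds was the part requiring the integral formula.
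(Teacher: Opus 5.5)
Your proposal is correct and is essentially the argument the paper has in mind. The paper leaves this proof to the reader, noting only that unimodularity kills the divergence of every left-invariant $1$-form so the compact-case arguments carry over; your pointwise version of (\ref{intf1-lcak}), obtained from the adjointness of $d$ and $\delta^g$ on left-invariant forms (whose defect is exactly such a divergence), together with the automatic Gauduchon condition $\delta^g\theta=0$, makes that precise (one harmless slip: the Koszul formula gives $\sum_i D_{e_i}e_i=+(\tr\,\ad)^\sharp$, but only its vanishing matters).
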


We let the reader verify the details of the proof. The key point is that, since the Lie algebra is unimodular, the divergence of any $1$-form is identically zero. Thus, essentially any argument based on Bochner formulas from compact case will extend to unimodular algebras. If $J$ is integrable, the equivalence (i) $\Leftrightarrow$ (ii) was also established in ~\cite{MR3397500}. It is also noteworthy that the fundamental $2$-form associated with any LCK structure on a $4$-dimensional unimodular Lie algebra satisfies the Laplacian eigenvalue equation.

Now, we would like to prove a generalization of~\cite{MR4771164,MR3674175} to pluricanonical LCAK metrics. We first need the following observation.
 \begin{prop}\label{star-ricci-contracted}
Let $(M, g, J, F, \theta)$ be a compact manifold of real dimension $2n$ with a pluricanonical LCAK structure. Then $$\int_M \rho^\ast(T,JT)\,F^n=0,$$ where $\rho^\ast=R(F)$ is the star-Ricci form and $T=\theta^\sharp$.
 \end{prop}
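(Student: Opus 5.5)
The plan is to compute $\rho^\ast(T,JT)$ pointwise, using the special geometry of the leaves of $\mathrm{Span}(T,JT)$, and to show that it is a divergence plus algebraic terms that integrate to zero.

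\textbf{Reduction to a commutator.} By the pair symmetry of $R$,
$$\rho^\ast(T,JT)=\tfrac12\sum_i g(R_{e_i,Je_i}T,JT)=\tfrac12\sum_i g(R_{T,JT}e_i,Je_i).$$
By the last proposition of the previous subsection, $[T,JT]=0$, so $R_{T,JT}=-[D_T,D_{JT}]$. Hence
$$\rho^\ast(T,JT)=-\tfrac12\,\mathrm{tr}\big(J\circ[D_T,D_{JT}]\big),$$
which is tensorial. By Proposition~\ref{J-parallel-T}, $D_TJ=D_{JT}J=0$, so both $D_T$ and $D_{JT}$ commute with $J$.

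\textbf{Making the expression tensorial.} Next I would write $D_{JT}Y=D_Y(JT)+[JT,Y]$ and $D_TY=D_YT+[T,Y]$. Set $A:=DT$ and $B:=D(JT)$, viewed as endomorphisms. By pluricanonicity, $A$ is symmetric and anticommutes with $J$. For $B$, use \eqref{DJ-expression} with $\|\theta\|$ constant (Lemma~\ref{constant-length}) and $N(T)$ symmetric:
$$B=JA+(\text{terms built from }\theta,\ J\theta,\ N(T)).$$
I then expand $[D_T,D_{JT}]$ into three kinds of terms:
\begin{itemize}
\item commutators $[A,B]$;
\item Lie-derivative terms $\mathcal{L}_TB$ and $\mathcal{L}_{JT}A$, up to $[\mathcal{L}_T,\mathcal{L}_{JT}]=\mathcal{L}_{[T,JT]}=0$;
\item cross terms.
\end{itemize}
The $J$-traces of the Lie-derivative terms would be handled as follows. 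By Proposition~\ref{j-invariance}, $\mathcal{L}_TJ=2N_{JT}=0$, so $\mathcal{L}_T$ commutes with $J$, and then
$$\mathrm{tr}(J\,\mathcal{L}_TB)=T\big(\mathrm{tr}(JB)\big)=-\delta^g\big(\mathrm{tr}(JB)\,\theta\big)$$
since $\delta^g\theta=0$. This integrates to zero. The $JT$-term is similar: $\mathcal{L}_{JT}J$ is $g$-symmetric by the proof of Proposition~\ref{j-invariance}, and $J\theta$ is coclosed.

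\textbf{Main obstacle.} The purely algebraic remainder does not vanish termwise. For instance $\mathrm{tr}(J[A,JA])=2\|A\|^2$, because $A$ anticommutes with $J$. So I must check that it cancels exactly against the correction terms involving $N(T)$ and $\theta\otimes\theta+J\theta\otimes J\theta$, or against $\int\|(D\theta)^{J,-}\|^2$-type terms produced when rewriting the Lie-derivative pieces.

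If this bookkeeping becomes unwieldy, the fallback is to use the two integrated Bochner identities \eqref{LCS-bochner-formula} and \eqref{LCS-bochner-formula-JT}. Under pluricanonicity, $\delta^g\theta=0$, $D_{JT}\theta=0$, $(D\theta)^{J,+}=0$ and $\|\theta\|$ is constant, so both identities simplify. Computing $DJ\theta=J\circ D\theta+(DJ)\theta$ via \eqref{DJ-expression} expresses $\|(DJ\theta)^{J,\pm}\|^2$ in terms of $\|(D\theta)^{J,-}\|^2$, $\|\theta\|^4$ and $g(D\theta,N(T))$. Taking a suitable linear combination of the two formulas should then eliminate all terms except $\int\rho^\ast(T,JT)F^n$, forcing it to vanish. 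The step I expect to be hardest is exactly this cancellation of the $N(T)$ and $\|A\|^2$ terms.
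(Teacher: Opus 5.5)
Your splitting $D_T=A+\mathcal{L}_T$, $D_{JT}=B+\mathcal{L}_{JT}$ and the formula $[D_T,D_{JT}]=[A,B]+\mathcal{L}_TB-\mathcal{L}_{JT}A$ are correct. However, Proposition~\ref{j-invariance} only says that the \emph{skew} part of $\mathcal{L}_TJ$ vanishes. Since $D_TJ=0$, one has $\mathcal{L}_TJ=[J,A]=2JA$, which is zero only when $D\theta=0$; likewise $\mathcal{L}_{JT}J=[J,B]$. With this corrected,
$\mathrm{tr}(J\mathcal{L}_TB)=T(\mathrm{tr}\,JB)-\mathrm{tr}([J,A]B)$ and $\mathrm{tr}(J\mathcal{L}_{JT}A)=JT(\mathrm{tr}\,JA)-\mathrm{tr}([J,B]A)$.
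The derivative terms vanish identically: $\mathrm{tr}\,JA=0$ because $D\theta$ is symmetric, and $\mathrm{tr}\,JB$ is a multiple of $g(dJ\theta,F)=-(n-1)\|\theta\|^2$, which is constant. The new algebraic terms add up to $-2\,\mathrm{tr}(J[A,B])$. So no divergence is left. Pointwise, $\rho^\ast(T,JT)$ is a universal nonzero multiple of $\mathrm{tr}(J[A,B])=2\,\mathrm{tr}(JAS)$. Here $S$ is the symmetric part of $B=S-\frac12\|\theta\|^2 J\circ\mathrm{pr}_H$, with $H=\mathrm{Span}(T,JT)^\perp$. Equivalently, it is a multiple of $\mathrm{tr}\big((\mathcal{L}_TJ)\,J\,(\mathcal{L}_{JT}J)\big)$, or, via \eqref{DJ-expression}, a combination of $\|D\theta\|^2$ (your $2\|A\|^2$) and $g(D\theta,N(T))$.

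The cancellation you hope to check is false, because this function is not identically zero. Take $M=S^1\times\Gamma\backslash\mathrm{Heis}_3$ with contact form $\eta$, Reeb field $\xi$, and a left-invariant frame $X_1,X_2$ of $\ker\eta$. Set $F=d\eta-dt\wedge\eta$, $\theta=dt$, $J\xi=\partial_t$, and let $J_0:=J|_{\ker\eta}$ be any smooth map into the $d\eta$-compatible complex structures. Then $J$ is adapted, so the structure is pluricanonical by Theorem~\ref{equivalence-first_kind-pluri}. In the frame $X_1,X_2$ one has $\mathcal{L}_TJ=\partial_tJ_0$ and $\mathcal{L}_{JT}J=-\xi(J_0)$. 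The quantity $\mathrm{tr}(\partial_tJ_0\,J_0\,\xi(J_0))$ is nonzero wherever these two derivatives are linearly independent.

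So the vanishing of the integral needs a global input, and your fallback does not supply one. \eqref{LCS-bochner-formula} and \eqref{LCS-bochner-formula-JT} are integrated forms of the same Weitzenb\"ock identity. Each says $\int_M\rho^\ast(T,JT)F^n=Q_i$, with $Q_i$ an integral of first-order terms. A linear combination can isolate $\int_M\rho^\ast(T,JT)F^n$ only if you already know that some $Q_i$ vanishes, which is the statement itself. The paper in fact uses the proposition in the opposite direction, to turn \eqref{LCS-bochner-formula} into Theorem~\ref{generalization-Ornea-Verbitsky}.

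The missing idea is to compare $\rho^\ast$ with a closed form. By \eqref{Chernform}, $\gamma^0=\rho^\ast+\Phi$ is closed, and $\Phi(T,JT)=\frac14g(JD_TJ,D_{JT}J)=0$ by Proposition~\ref{J-parallel-T}. In the pluricanonical case \eqref{deldF} reads $\delta^gdF=(n-1)\,\theta\wedge J\theta$, so $\theta\wedge J\theta$ is coexact. Mere coclosedness would not suffice, since $\gamma^0$ need not be exact. Hence
\begin{equation*}
\int_M\rho^\ast(T,JT)\,F^n=\int_M g(\gamma^0,\theta\wedge J\theta)\,F^n=\frac{1}{n-1}\int_M g(d\gamma^0,dF)\,F^n=0.
\end{equation*}
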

 \begin{proof}
We recall that for any almost Hermitian manifold, a 2-form representative of the first Chern class $2\pi c_1(M,J)$ is given by
\begin{equation} \label{Chernform} 
\gamma^0(X, Y) = \rho^*(X, Y) + \Phi(X, Y), 
\end{equation}
where $\gamma^0(X,Y)=\frac{1}{2}\sum\limits_{i=1}^{2n}g(R^{\nabla^0}_{X,Y}e_i,Je_i)$ is the Hermitian-Ricci form of the first canonical Hermitian connection $\nabla^0$ (see e.g. \cite{MR1456265}) defined as
$$\nabla^0_X Y = D_X Y - \frac{1}{2}J(D_X J)(Y),$$ 
(here $R^{\nabla^0}$ is the curvature of $\nabla^0$ and $\{e_i,Je_i\}$ is a local $U(n)$ frame of the tangent bundle $TM$), and the 2-form $\Phi$ is given by
\begin{equation*} \label{Phi} 
\Phi(X,Y) = \frac{1}{4} g( J\left(D_X J\right), \left(D_Y J\right)).
\end{equation*}
%It is shown in~\cite{} that
%\begin{equation}
 %- 4 \Phi - dJ \theta = 2 g( N_{J\cdot}, N_{\cdot})+ 2 N_{JT} + \alpha,
%\end{equation}
%where $\alpha$ denotes the 2-form defined by
%\begin{equation*} \label{alpha-def}
 %  \alpha(X, Y) =  (D_{X} \theta)(JY) - (D_{Y} \theta)(JX).
%\end{equation*}
%Hence, using the hypothesis that $T$ is $g$-orthogonal to the span of the Nijenhuis tensor, we get
%\begin{eqnarray*}
%- 4 \Phi(T,JT) - dJ \theta(T,JT) &=& 2 g( N_{JT}, N_{JT})+ 2 g(N(T,JT),JT) + \alpha(T,JT),\\
%&=&-(D_{T} \theta)(T) - (D_{JT} \theta)(JT),\\
%&=&-(D_{T} \theta)(T) + (D_{T} \theta)(T),\\
%&=&0.
%^\end{eqnarray*}
From Proposition~\ref{J-parallel-T}, we deduce that
\begin{equation}\label{vanishing-phi}
\Phi(T,JT) =g(\Phi,\theta\wedge J\theta)=0.
\end{equation}
Since $g$ is a pluricanonical LCAK metric, we deduce from Equation~\ref{deldF} that
\begin{equation}\label{co-exact-theta}
\delta^gdF=(n-1)\,\theta\wedge J\theta.
\end{equation}
We compute using Equations~(\ref{Chernform}),~(\ref{vanishing-phi}) and~(\ref{co-exact-theta})
\begin{eqnarray*}
\int_M g(\rho^\ast,\theta\wedge J\theta)\,F^n&=&\int_M g(\gamma^0,\theta\wedge J\theta)\,F^n-\int_M g(\Phi,\theta\wedge J\theta)\,F^n\\
&=&\int_M g(\gamma^0,\theta\wedge J\theta)\,F^n\\
&=&\frac{1}{n-1}\int_M g(\gamma^0,\delta^gdF)\,F^n\\
&=&\frac{1}{n-1}\int_M g(d\gamma^0,dF)\,F^n\\
&=&0.
%&=&\int_M g(\gamma^0,dJ\theta+ F)F^n\\
%&=&\int_M g(\gamma^0,dJ\theta)F^n+\int_M g(\gamma^0,F)F^n\\
%&=&\int_M g(\gamma^0,F)g(dJ\theta,F)\,F^n-n(n-1)\int_M\gamma^0\wedge dJ\theta\wedge F^{n-2}+\int_M g(\gamma^0,F)F^n\\
%&=&-(n-1)\int_M g(\gamma^0,F)\,F^n-n(n-1)\int_M\gamma^0\wedge dJ\theta\wedge F^{n-2}+\int_M g(\gamma^0,F)F^n\\
%&=&-n(n-1)\int_M\gamma^0\wedge dJ\theta\wedge F^{n-2}+(2-n)\int_M g(\gamma^0,F)F^n.
\end{eqnarray*}
We point out that when $n=1$, $\theta=0$ and so the statement is trivially satisfied. 
%Here, we use in the fourth line equation~(\ref{J-invarinat-formula}) (recall that $dJ\theta$ is $J$-invariant). Now, because $dJ\theta=-%F+\theta\wedge J\theta,$ we have that 
%\begin{eqnarray*}
%\int_M\gamma^0\wedge dJ\theta \wedge F^{n-2}&=&\int_M \gamma^0\wedge J\theta \wedge dF^{n-2}\\
%&=&(n-2)\int_M \gamma^0\wedge J\theta \wedge \theta\wedge F^{n-2}\\
%&=&-(n-2)\int_M \gamma^0\wedge \theta\wedge J\theta \wedge  F^{n-2}\\
%&=& -(n-2)\int_M \gamma^0\wedge dJ\theta \wedge  F^{n-2}-(n-2)\int_M \gamma^0\wedge  F^{n-1},
%\end{eqnarray*}
%and so $$\int_M\gamma^0\wedge dJ\theta \wedge F^{n-2}=-\frac{n-2}{n-1}\int_M\gamma^0\wedge F^{n-1}=-\frac{n-2}{n(n-1)}\int_Mg(\gamma^0,F) F^{n}.$$
%We conclude that 
% $$\int_M g(\rho^\ast,\theta\wedge J\theta)\,F^n=(n-2)\int_M g(\gamma^0,F)F^n+(2-n)\int_M g(\gamma^0,F)F^n=0.$$
 \end{proof}

Now, we can prove a generalization of~\cite{MR4771164,MR3674175} to pluricanonical LCAK metrics.
\begin{thm}\label{generalization-Ornea-Verbitsky}
Let $(M, g, J, F, \theta)$ be a compact manifold of real dimension $2n$ with a pluricanonical LCAK structure. Suppose that 
\begin{equation}\label{theta-killing-condition}
\int_Mg(N(T),D\theta)\,F^n=0,
\end{equation}
where $T=\theta^\sharp$. Then $\theta$ is $D$-parallel.
\end{thm}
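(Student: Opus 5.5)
The plan is to use the integrated Bochner-type identity (\ref{LCS-bochner-formula}) with $\alpha=\theta$. For a pluricanonical LCAK structure most of its terms vanish, and the identity should reduce to a nonpositive integral of $\|D\theta\|^2$.

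First I simplify the left-hand side of (\ref{LCS-bochner-formula}). By definition of the pluricanonical condition, $(D\theta)^{J,+}=0$, so $D\theta=(D\theta)^{J,-}$ and the left-hand side equals $-\int_M\|D\theta\|^2\,F^n$.

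Next I go through the right-hand side term by term.
\begin{itemize}
\item The term $\int_M\rho^\ast(T,JT)\,F^n$ vanishes by Proposition~\ref{star-ricci-contracted}.
\item The Gauduchon property $\delta^g\theta=0$ (noted after Definition~\ref{definition-pluricanonical}) kills the term $\frac34\|\theta\|^2\delta^g\theta$.
\item The proposition on the totally geodesic leaves of Span$(T,JT)$ gives $D_{JT}\theta=0$. Hence $g(D_{JT}\theta,JT)=0$, and the term with coefficient $n-\frac12$ disappears.
\end{itemize}
What is left is
$$-\int_M\|D\theta\|^2\,F^n=2\int_M g(D\theta,N(T))\,F^n .$$
One should keep track of whether the author's pairing $g(N(T),D\theta)$ in the hypothesis matches $g(D\theta,N(T))$ here. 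Since $N(T)$ is $g$-symmetric by Proposition~\ref{J-parallel-T} and $D\theta$ is symmetric because $\theta$ is closed, the two pairings coincide.

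Under the hypothesis (\ref{theta-killing-condition}) the right-hand side is zero, so $\int_M\|D\theta\|^2F^n=0$ and therefore $D\theta=0$.

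The main obstacle is checking that (\ref{LCS-bochner-formula}) is correct as stated, especially its constants and the identification of its last term. In the Corollary's derivation the sum $\sum D\theta(Je_i,(D_{e_i}J)T)$ produced exactly $2g(D\theta,N(T))$ via (\ref{DJ-expression}), which is where that term comes from. The only other thing to confirm is that no boundary terms survive. They do not: $\frac14 T(\|\theta\|^2)$ is zero pointwise because $\|\theta\|$ is constant by Lemma~\ref{constant-length}, and in any case it integrates against the Gauduchon condition. The divergence term from $\delta^g(D\alpha)$ paired with $\theta$ integrates by parts to the squared norms on the left-hand side. If the constants in (\ref{LCS-bochner-formula}) turned out to be off, the argument would be unaffected: the $\rho^\ast$, $\delta^g\theta$ and $D_{JT}\theta$ terms vanish regardless of their coefficients, and only the nonzero multiple of $\int g(D\theta,N(T))F^n$ matters.
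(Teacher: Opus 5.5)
Your proposal is correct and is essentially the paper's proof: integrate the Bochner identity (\ref{LCS-bochner-formula}), kill the $\rho^\ast$ term with Proposition~\ref{star-ricci-contracted}, and remove the $\delta^g\theta$ and $g(D_{JT}\theta,JT)$ terms using the pluricanonical condition. Where you use $D_{JT}\theta=0$ directly, the paper instead rewrites $g(D_{JT}\theta,JT)=-\frac{1}{2}d(\|\theta\|^2)(T)$; either way $\int_M\|D\theta\|^2F^n$ is left equal to a nonzero multiple of the hypothesis integral, and, as you note, the sign difference from the paper's final line is immaterial because that integral vanishes by assumption.
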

\begin{proof}
 
%Since $(F,\theta)$ is an LCS structure, we have that
%\begin{equation}\label{DJ-expression}
%D_XF=\frac{1}{2}\left(X^\flat\wedge J\theta+ JX^\flat\wedge \theta\right)+2N_{JX},
%\end{equation}
% where $X^\flat$ is the $g$-Riemannian dual of $X.$ Next, we apply (\ref{bochner-formula}) with  $\alpha=\theta,X=T$. Using the fact that $g$ is %pluricanonical LCAK (in particular $N(T)$ is $g$-symmetric by Proposition~\ref{J-parallel-T}), we obtain
%\begin{eqnarray*}
%-\left(\delta^g\left(D\theta\right)\right)(T)&=&\rho^\ast(T,JT)-(n-1)D\theta(JT,JT)-\sum_{i=1}^{2n}D\theta(Je_i,\left(D_{e_i}J\right)\theta)\\
%&=&\rho^\ast(T,JT)+\frac{n-1}{2}g(d\|\theta\|^2,\theta)-\sum_{i=1}^{2n}\frac{1}{2}g(D_{Je_i}\theta,\theta(e_i)J\theta+\theta(Je_i)\theta-%\|\theta\|^2Je_i)\\
%&+&2g(N(T,D_{Je_i}\theta),Je_i)\\
%&=&\rho^\ast(T,JT)-\frac{1}{2}g(D_{JT}\theta,J\theta)-\frac{1}{2}g(D_{T}\theta,\theta)-\delta^g\theta-2\sum_{i=1}^{2n}g(N(T,Je_i),D_{Je_i}\theta))\\
%&=&\rho^\ast(T,JT)+\frac{1}{2}g(D_{T}\theta,\theta)-\frac{1}{4}g(d\|\theta\|^2,\theta)-2\sum_{i=1}^{2n}g(N(T,Je_i),D_{Je_i}\theta))\\
%&=&\rho^\ast(T,JT)+\frac{1}{4}g(d\|\theta\|^2,\theta)-2\sum_{i=1}^{2n}g(N(T,Je_i),D_{Je_i}\theta))\\
%&=&\rho^\ast(T,JT)-2g(N(T),D\theta),
%\end{eqnarray*}
%where we use equation~(\ref{DJ-expression}) in the second line.
Since $g$ is pluricanonical LCAK, we have that
$$g(D_{JT}\theta,JT)=-g(D_{T}\theta,T)=-\frac{1}{2}d(\|\theta\|^2)(T).$$
We integrate then Equation~(\ref{LCS-bochner-formula}) using the above equation and Proposition~\ref{star-ricci-contracted}, we obtain
$$\int_M\|D\theta\|^2\,F^n=2\int_Mg(N(T),D\theta)\,F^n=0.$$
We conclude that $\theta$ is $D$-parallel.
\end{proof}
It follows from the above theorem that for a pluricanonical LCAK metric $g$ on a compact manifold, $T$ is a Killing holomorphic vector field if and only if the condition~(\ref{theta-killing-condition}) is satisfied ($T$ being a (real) holomorphic vector field means that $\mathcal{L}_TJ=0$). On the other hand, in the integrable setting,
it is known that for a Vaisman metric, the vectors fields $T$ and $JT$ must be Killing holomorphic, see for instance~\cite{MR4771164}. It is natural then to ask in the non-integrable case for a pluricanonical LCAK metric $g$, when is $JT$ a Killing holomorphic vector field?
\begin{thm}
Let $(M, g, J, F, \theta)$ be a compact manifold of real dimension $2n$ with a pluricanonical LCAK structure. Suppose that $$\int_Mg(N(JT),D\left(J\theta\right))\,F^n=0,$$ where $T=\theta^\sharp$. Then, $JT$ is a Killing holomorphic vector field.
\end{thm}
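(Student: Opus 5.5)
Integrating the second Bochner-type identity (\ref{LCS-bochner-formula-JT}), with $\alpha=J\theta$, is the natural analogue of the proof of Theorem~\ref{generalization-Ornea-Verbitsky}. The aim is to reduce the right-hand side to $2\int_M g(N(JT),D(J\theta))\,F^n$ plus terms that vanish, so that the hypothesis forces $(DJ\theta)^{J,+}$ and $(DJ\theta)^{J,-}$ to be controlled.

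We use the pluricanonical structure throughout. By Lemma~\ref{constant-length}, $\|\theta\|$ is constant, and by definition $\delta^g\theta=0$. The last proposition before Theorem~\ref{main-thm-1} gives $D_{JT}\theta=0$, so $g(D_{JT}\theta,JT)=0$. Proposition~\ref{star-ricci-contracted} gives $\int_M\rho^\ast(T,JT)\,F^n=0$. Since the image of $N$ is orthogonal to $T$ and $JT$, we have $N_{JT}=0$, and hence $\int_M g(N_{JT},N(JT))\,F^n=0$.

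Substituting these into (\ref{LCS-bochner-formula-JT}) leaves
\[
\int_M \|(DJ\theta)^{J,+}\|^2-\|(DJ\theta)^{J,-}\|^2\,F^n=\int_M\frac{n-1}{2}\|\theta\|^4-2g\big((DJ\theta)^{sym},N(JT)\big)\,F^n .
\]
Because $N(JT)$ is $g$-symmetric (Proposition~\ref{J-parallel-T} applied to $JT$, which is also orthogonal to the image of $N$), we can replace $(DJ\theta)^{sym}$ by $DJ\theta$ in the last term. The hypothesis then kills it.

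Next we identify the pieces of $DJ\theta$. From $dJ\theta=-\|\theta\|^2F+\theta\wedge J\theta$ (Proposition~\ref{lee-symplectomorphism} together with (\ref{dJtheta-formula})), the skew part of $DJ\theta$ is $\tfrac12 dJ\theta$. This part is $J$-invariant and has squared norm, as a $2$-tensor, equal to $\tfrac{n-1}{2}\|\theta\|^4$: indeed $\|F\|^2=n$, $g(F,\theta\wedge J\theta)=\|\theta\|^2$ and $\|\theta\wedge J\theta\|^2=\|\theta\|^4$, so $\|dJ\theta\|^2=(n-1)\|\theta\|^4$ as a form. The symmetric part is $g(\mathcal{L}_{JT}\cdot,\cdot)/2$. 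Using $D_XJ\theta=(D_XJ)\theta+J D_X\theta$ and (\ref{DJ-expression}), and noting that $J D\theta$ composed appropriately turns the $J$-anti-invariant tensor $D\theta$ into a $J$-anti-invariant one, we split this symmetric part as follows. Its $J$-invariant part is essentially $\tfrac12 g((\mathcal{L}_{JT}J)\ldots)$-type terms coming from $(D_XJ)\theta$, namely the Nijenhuis term $2N(JT)$-type plus the explicit $\theta$-terms, whose squared norms we compute. Its $J$-anti-invariant part is $JD\theta$-type.

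The cleanest way to organize this, and the step I expect to be the main obstacle, is the following. Write $\|(DJ\theta)^{J,+}\|^2=\tfrac{n-1}{2}\|\theta\|^4+\|(DJ\theta)^{sym,J,+}\|^2$, so that the $\|\theta\|^4$ terms cancel and we obtain
\[
\int_M\|(DJ\theta)^{sym,J,+}\|^2\,F^n=\int_M\|(DJ\theta)^{J,-}\|^2\,F^n .
\]
We then need a second relation to conclude that both sides vanish.

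For that second relation I would use $d\theta=0$. By the computation in Proposition~\ref{j-invariance}, $\mathcal{L}_{JT}J$ is $g$-symmetric, and $\mathcal{L}_{JT}J-J\mathcal{L}_TJ=4N(T,\cdot)$. By Proposition~\ref{lee-symplectomorphism} the relevant part of $\mathcal{L}_TJ$ vanishes, since $\mathcal{L}_TF=0$ and $\mathcal{L}_Tg$ is determined by $(D\theta)^{sym}$. This gives $\mathcal{L}_{JT}J=4N(T,\cdot)+J\mathcal{L}_TJ$.

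Combining this with the hypothesis and with $(D\theta)^{J,+}=0$, I expect to show that the $J$-anti-invariant part of $\mathcal{L}_{JT}g$ vanishes after integration. Then $(DJ\theta)^{J,-}$ is skew and $J$-anti-invariant, hence zero, because the skew part $\tfrac12 dJ\theta$ is $J$-invariant. The displayed equality then yields $(DJ\theta)^{sym,J,+}=0$. Hence $(DJ\theta)^{sym}=0$, so $JT$ is Killing, and $\mathcal{L}_{JT}J=0$ follows from $\mathcal{L}_{JT}F=d\iota_{JT}F=-d\theta=0$ together with $\mathcal{L}_{JT}g=0$.

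If the direct bookkeeping of the $J$-invariant symmetric part proves messy, a fallback is to compute $\|DJ\theta\|^2$ pointwise from $DJ\theta=(DJ)\theta+JD\theta$. The $J$-invariant part of $JD\theta$ is zero, so the $J$-invariant symmetric part of $DJ\theta$ comes only from $(D_\cdot J)\theta$. This part is given explicitly by (\ref{DJ-expression}) and involves $N(T)$ and $\theta\otimes\theta$ terms.
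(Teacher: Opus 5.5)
\textbf{There is a genuine gap: your ``second relation'' is never proved, and what it asks for is the conclusion of the theorem.}

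Your reduction of (\ref{LCS-bochner-formula-JT}) is correct. Evaluating $\|\tfrac12 dJ\theta\|^2=\tfrac{n-1}{2}\|\theta\|^4$ pointwise from $dJ\theta=-\|\theta\|^2F+\theta\wedge J\theta$ is a valid, even slightly quicker, substitute for the paper's use of $\delta^g dJ\theta(JT)=(n-1)\|\theta\|^4$. So you do reach $\int_M\|(DJ\theta)^{sym,J,+}\|^2F^n=\int_M\|(DJ\theta)^{J,-}\|^2F^n$.

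The problem is the second relation you then ask for. It is only announced (``I expect to show''). What you propose to show, that the $J$-anti-invariant part of $\mathcal{L}_{JT}g$ vanishes, is not an auxiliary fact. Since $\mathcal{L}_{JT}g=2(DJ\theta)^{sym}$ has no $J$-invariant part, its $J$-anti-invariant part vanishing is exactly the statement that $JT$ is Killing. None of the ingredients you list produce it: not the identity $\mathcal{L}_{JT}J-J\mathcal{L}_TJ=4N(T,\cdot)$, not $(D\theta)^{J,+}=0$, and not the hypothesis, which you have already spent on the $N(JT)$ term.

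Your description of $(DJ\theta)^{sym,J,+}$ as built from $N(JT)$-type and $\theta\otimes\theta$-type terms is also wrong. $N(JT)$ is symmetric and $J$-anti-invariant. The $\theta$-terms of (\ref{DJ-expression}) contribute nothing $J$-invariant to the symmetric part.

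\textbf{The missing observation is one you already use in your last sentence.} We have $\mathcal{L}_{JT}F=d\iota_{JT}F+\iota_{JT}(\theta\wedge F)=-d\theta+0=0$. The second Cartan term, which you omitted, vanishes because $\theta(JT)=0$ and $\iota_{JT}F=-\theta$.

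Applying $\mathcal{L}_{JT}$ to $F=g(J\cdot,\cdot)$ gives $2(DJ\theta)^{sym}_{J\cdot,\cdot}=-g((\mathcal{L}_{JT}J)\cdot,\cdot)$. Since $\mathcal{L}_{JT}J$ anticommutes with $J$, the right-hand side is $J$-anti-invariant. Hence, pointwise:
\begin{itemize}
\item $(DJ\theta)^{sym}$ is $J$-anti-invariant, so $(DJ\theta)^{sym,J,+}\equiv 0$;
\item $(DJ\theta)^{J,+}=\tfrac12 dJ\theta$;
\item $(DJ\theta)^{J,-}=(DJ\theta)^{sym}$.
\end{itemize}
Your identity then reads $\int_M\|(DJ\theta)^{sym}\|^2F^n=0$. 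So $JT$ is Killing, and $\mathcal{L}_{JT}J=0$ follows as you say.

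This is exactly how the paper's proof begins. With this observation no second relation is needed. Your fallback pointwise computation, if carried out correctly, would simply have found that the $J$-invariant symmetric part is zero.
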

\begin{proof}
Applying the Lie derivative with respect to $JT$ to $F=g(J\cdot,\cdot)$, we get
$$2\left(DJ\theta\right)^{sym}_{J\cdot,\cdot}=-g(\mathcal{L}_{JT}J\cdot,\cdot).$$
Hence, $\left(DJ\theta\right)^{sym}$ is $J$-anti-invariant. On the other hand, $\left(DJ\theta\right)^{skew}=\frac{1}{2}dJ\theta$ is $J$-invariant by Lemma~\ref{j-invariance}, and $N(JT)$ is $g$-symmetric by Proposition~\ref{J-parallel-T}. When $g$ is pluricanonical LCAK, Equation~(\ref{LCS-bochner-formula-JT}) then reduces to
\begin{eqnarray}
\int_M\frac{1}{2}\|dJ\theta\|^2-\|(DJ\theta)^{J,-}\|^2\,F^n&=&\int_M\rho^\ast(T,JT)\,F^n+\frac{1}{2}(n-\frac{3}{2})\int_Md\left(\|\theta\|^2\right)(T)\,F^n
 +\frac{n-1}{2}\int_M\|\theta\|^4\,F^n\nonumber\\
 &-&2\int_Mg(\left(DJ\theta\right)^{sym},N(JT))\,F^n\nonumber\\
 &=&\int_M\rho^\ast(T,JT)\,F^n +\frac{n-1}{2}\int_M\|\theta\|^4\,F^n-2\int_Mg(\left(DJ\theta\right)^{sym},N(JT))\,F^n.\label{Lee-vector-killing}
\end{eqnarray}
%\begin{eqnarray}
 %\left(\delta^g(DJ\theta)^{J,+}-\delta^g(DJ\theta)^{J,-}\right)(JT)&=&\rho^\ast(T,JT)-(n-\frac{3}{2})g(D_{T}\theta,T)-%\|\theta\|^2g\left(\left(DJ\theta\right)^{J,+},F\right)\nonumber\\
 %&-&2g(\left(DJ\theta\right)^{sym},N(JT))\nonumber\\
%\frac{1}{2} \left(\delta^gdJ\theta\right)(JT)-\delta^g(DJ\theta)^{sym}(JT)&=& \rho^\ast(T,JT)-\frac{1}{2}(n-\frac{3}{2})d(\|\theta\|^2)(T)-\frac{1}%{2}\|\theta\|^2g\left(dJ\theta,F\right)\nonumber\\
 %&-&2g(\left(DJ\theta\right)^{sym},N(JT))\nonumber\\
 %&=&\rho^\ast(T,JT)-\frac{1}{2}\|\theta\|^2(-(n-1)\|\theta\|^2-\delta^g\theta)-2g(\left(DJ\theta\right)^{sym},N(JT))\nonumber\\
 %&=&\rho^\ast(T,JT)+\frac{n-1}{2}\|\theta\|^4-2g(\left(DJ\theta\right)^{sym},N(JT))\label{Lee-vector-killing}
 %\end{eqnarray}
On the other hand, since $g$ is pluricanonical LCAK, Equation~(\ref{deldJtheta}) reduces to
\begin{equation*}
\delta^g d J\theta = (n-2)\left(D_{T}J\theta-D_{JT}\theta\right) + (n-1)\|\theta\|^2J\theta.    
\end{equation*}
Hence
\begin{equation*}
\delta^g d J\theta(JT)=(n-2)d(\|\theta\|^2)(T)+(n-1)\|\theta\|^4=(n-1)\|\theta\|^4,
\end{equation*}
because $\|\theta\|$ is constant. Substituting this in Equation~(\ref{Lee-vector-killing}), we obtain
\begin{eqnarray*}
\frac{n-1}{2}\int_M\|\theta\|^4\,F^n-\int_M\|(DJ\theta)^{J,-}\|^2\,F^n&=&\int_M\rho^\ast(T,JT)\,F^n +\frac{n-1}{2}\int_M\|\theta\|^4\,F^n-2\int_Mg(\left(DJ\theta\right)^{sym},N(JT))\,F^n\\
-\int_M\|(DJ\theta)^{J,-}\|^2\,F^n&=&\int_M\rho^\ast(T,JT)\,F^n-2\int_Mg(\left(DJ\theta\right)^{sym},N(JT))\,F^n.
\end{eqnarray*}
The theorem follows from Proposition~\ref{star-ricci-contracted}.

\end{proof}

In~\cite{MR1456265}, Gauduchon introduced a distinguished $1$-parameter family of canonical Hermitian connections $\nabla^t$ that includes the Chern connection (when $t=1$), the first canonical connection $\nabla^0$, and the Bismut connection (when $t=-1$) for almost Hermitian metrics. The Hermitian-Ricci form $\gamma^t$ of $\nabla^t$ is defined as $$\gamma^t(X,Y)=\frac{1}{2}\sum\limits_{i=1}^{2n}g(R^{\nabla^t}_{X,Y}e_i,Je_i),$$
where $R^{\nabla^t}$ is the curvature of $\nabla^t$. In~\cite{barbaro2025calabiyaulocallyconformallykahler}, Barbaro and Otiman proved that, in the compact case, LCK Hermitian-Ricci flat metrics, with respect to any canonical Hermitian connections $\nabla^t$, are Vaisman provided that the metric is Gauduchon. In the following, we generalize their result to the LCAK setting.
\begin{thm}\label{Gauduchon-Ricci-flat}
Let $(M, g, J, F, \theta)$ be a compact manifold of real dimension $2n$ with an LCAK structure with $T=\theta^\sharp$ $g$-orthogonal to the image of the Nijenhuis tensor $N$. Suppose that $g$ is Gauduchon and Hermitian-Ricci flat with respect to some connection $\nabla^t$, and that $$\int_Mg(N(T),D\theta)\,F^n=0,$$ then $\theta$ is $D$-parallel.
%, we have the following
%$\begin{enumerate}[label=\arabic*)]
%\item if $\int_Mg([T,JT],JT)\,F^n=0$, then $g$ is a pluricanonical LCAK metric.\\
%\item  
%\end{enumerate}
\end{thm}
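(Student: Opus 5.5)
The plan is to reduce to Theorem~\ref{generalization-Ornea-Verbitsky}: first show that $g$ is pluricanonical, then invoke that theorem, whose integral hypothesis is exactly the one assumed here. By Theorem~\ref{compchars-plurican}(ii) (or directly from (\ref{intf1-lcak})), and since $g$ is Gauduchon with $T$ orthogonal to the image of $N$, pluricanonicity reduces to showing $(D\theta)^{J,+}=0$. Equivalently it suffices to show $\int_M J\theta([T,JT])\,F^n\le 0$. Indeed, with $\delta^g\theta=0$ and $N_T=0$, identity (\ref{intf1-lcak}) reads
\[
4\int_M\|(D\theta)^{J,+}_{J\cdot,\cdot}\|^2F^n=n\int_M J\theta([T,JT])\,F^n .
\]

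The Ricci-flatness enters through the star-Ricci form, as in Proposition~\ref{star-ricci-contracted}. I will use Gauduchon's formula for the Ricci forms of the canonical family. On an almost Hermitian manifold this formula gives $\gamma^t=\gamma^0+c(t)\,dJ\theta$ for an explicit constant $c(t)$ (with the normalization $(n-1)\theta=J\delta^gF$). Possibly there are additional terms built from $N$, but these should vanish after contraction with $\theta\wedge J\theta$ when $N_T=0$. Combining this with (\ref{Chernform}) and with $\Phi(T,JT)=0$ (which only needs $D_TJ=D_{JT}J=0$ from Proposition~\ref{J-parallel-T}), the hypothesis $\gamma^t=0$ gives pointwise
\[
\rho^\ast(T,JT)=-c(t)\,dJ\theta(T,JT).
\]
Here $dJ\theta(T,JT)$ is computed from (\ref{dJtheta-formula}) with $N_{JT}=0$. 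The $\|\theta\|^4$ terms cancel, leaving a combination of $(D\theta)^{J,+}(T,T)$ and $(D\theta)^{J,+}(JT,JT)$. This is exactly $-J\theta([T,JT])+\tfrac12 T(\|\theta\|^2)$, and the last term integrates to zero because $\delta^g\theta=0$. Hence $\int_M\rho^\ast(T,JT)F^n$ is a multiple of $\int_M J\theta([T,JT])F^n$.

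Next I feed this into the integrated Bochner formula (\ref{LCS-bochner-formula}), using $\delta^g\theta=0$. The term $g(D_{JT}\theta,JT)=g(D_{JT}T,JT)$ is again expressible through $J\theta([T,JT])$ and $T(\|\theta\|^2)$. The result should take the form
\[
\int_M\|(D\theta)^{J,+}\|^2-\|(D\theta)^{J,-}\|^2\,F^n=a(t,n)\int_M J\theta([T,JT])F^n+2\int_M g(N(T),D\theta)F^n,
\]
and the last term vanishes by hypothesis. Together with the identity above, $\int J\theta([T,JT])=\tfrac{2}{n}\int\|(D\theta)^{J,+}\|^2$, this becomes a linear relation between $\int\|(D\theta)^{J,+}\|^2$ and $\int\|(D\theta)^{J,-}\|^2$. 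If the coefficient $1-\tfrac{2}{n}a(t,n)$ is negative, or has the right sign for the relevant $t$, I conclude $(D\theta)^{J,+}=0$. Then $g$ is pluricanonical by Theorem~\ref{compchars-plurican}, and Theorem~\ref{generalization-Ornea-Verbitsky} gives $D\theta=0$. Alternatively, the same relation with both sides forced to vanish would give $D\theta=0$ directly.

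The main obstacle is the second step: getting the correct almost-Hermitian version of Gauduchon's formula relating $\gamma^t$ to $\gamma^0$, with the right constant and sign, and checking that its Nijenhuis-dependent corrections drop out when contracted with $\theta\wedge J\theta$ under $N_T=0$. I would first verify it in the integrable case against Barbaro–Otiman. If the sign of $a(t,n)$ turns out to be unfavourable for some $t$, I would instead pair $\gamma^0=-c(t)dJ\theta$ with $\delta^g dF$, as in Proposition~\ref{star-ricci-contracted}, using $d\gamma^0=0$. This yields $\int g(dJ\theta,\delta^g dF)=0$ weighted by $c(t)$, an independent relation that should close the argument.
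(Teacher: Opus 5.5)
Your plan uses the same ingredients as the paper's proof: the identity (\ref{intf1-lcak}), the integrated Bochner formula (\ref{LCS-bochner-formula}), $\rho^\ast(T,JT)=\gamma^0(T,JT)$, and Gauduchon's relation. In the almost Hermitian case that relation is exactly $\gamma^t=\gamma^0-\frac{t(n-1)}{2}\,dJ\theta$, with no Nijenhuis corrections. The gap is that you never pin down the constants, and they matter.

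Write $P=\int_M\|(D\theta)^{J,+}\|^2F^n$, $Q=\int_M\|(D\theta)^{J,-}\|^2F^n$ and $X=\int_M J\theta([T,JT])F^n=\frac{2}{n}P$. Since $\delta^g\theta=0$, you have $\int_M T(\|\theta\|^2)F^n=0$. Hence $\int_M\rho^\ast(T,JT)F^n=-\frac{t(n-1)}{2}X$ and $\int_M g(D_{JT}\theta,JT)F^n=-X$. This gives $a(t,n)=n-\frac12-\frac{t(n-1)}{2}$ and
\[
Q=\frac{(t-1)(n-1)}{n}\,P .
\]
For $t<1$ this forces $P=Q=0$, so $D\theta=0$ directly. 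This is the paper's argument, and the detour through Theorems \ref{compchars-plurican} and \ref{generalization-Ornea-Verbitsky} is unnecessary.

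For $t=1$, the Chern connection, the coefficient vanishes. You then only obtain $(D\theta)^{J,-}=0$, with no control of $(D\theta)^{J,+}$. Your fallback does not rescue this case. The identity $\int_M g(dJ\theta,\delta^g dF)F^n=0$ holds on every compact LCAK manifold, and it is precisely the orthogonality from which (\ref{intf1-lcak}) was derived. So pairing $\gamma^0=\frac{t(n-1)}{2}dJ\theta$ with $\delta^g dF$ yields nothing new.

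The missing idea for $t=1$ is that $J$-invariance of $D\theta$ kills the bracket term pointwise. Indeed, $g(D_{JT}T,JT)=(D\theta)(JT,JT)=(D\theta)(T,T)=\frac12T(\|\theta\|^2)$. Therefore $J\theta([T,JT])=g(D_T JT,JT)-g(D_{JT}T,JT)=0$. Your identity $4\int_M\|(D\theta)^{J,+}_{J\cdot,\cdot}\|^2F^n=n\int_M J\theta([T,JT])F^n=0$ then gives $(D\theta)^{J,+}=0$. This is exactly Theorem \ref{anti-pluricanonical}, which the paper invokes for that case.

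A minor slip: pointwise $dJ\theta(T,JT)=T(\|\theta\|^2)-J\theta([T,JT])$, not with coefficient $\frac12$ on the first term. This is harmless after integration.
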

\begin{proof} First, note that under the assumptions, formula (\ref{intf1-lcak}) of Proposition \ref{intf1-lcak-prop} becomes
$$ \int_M  \|(D\theta)^{J,+}\|^2 F^n = \frac{n}{2}\int_M J\theta([T,JT]) \,F^n = - \frac{n}{2}\int_M g(D_{JT}\theta,J\theta) \, F^n \; .$$
Using again the assumptions, the above formula, and relation (\ref{LCS-bochner-formula}), we also get: 
$$ \int_M  \|(D\theta)^{J,-}\|^2 F^n = \int_M \Big[ -\rho^{\ast}(T, JT) + \frac{n-1}{2} g(D_{JT}\theta,J\theta) \Big] \; F^n \; .$$
We compute the first term on the right hand-side. Because $T$ is $g$-orthogonal to the image of the Nijenhuis tensor, we get from Relation~(\ref{Chernform}) that 
$$\rho^\ast(T,JT) = \gamma^0(T,JT) \; .$$
From Gauduchon~\cite[Remark 5]{MR1456265}, we deduce the following relation between Hermitian-Ricci forms of the $\nabla^t$ connections in the almost Hermitian case:
$$\gamma^t=\gamma^0-\frac{t(n-1)}{2}dJ\theta,$$
for $-1\leq t\leq 1.$ By hypothesis, the metric is Hermitian--Ricci flat with respect to $\nabla^t$ for some $t$, so
\begin{equation*}\label{hermitian-flat-relation}
\gamma^0=\frac{t(n-1)}{2}dJ\theta.
\end{equation*}
Using this, and again the other assumptions, we have
\begin{eqnarray*}
    \int_M \rho^\ast(T,JT) \; F^n &=& \int_M \gamma^0(T,JT) \; F^n = \int_M g\left( \gamma^0, \theta \wedge J\theta \right) \; F^n = \frac{t(n-1)}{2}\int_M g\left( dJ\theta , \theta \wedge J\theta \right) \; F^n \\
    &=& \frac{t(n-1)}{2}\int_M g\left( J\theta , \delta^g(\theta \wedge J\theta ) \right) \; F^n \\ &= & \frac{t(n-1)}{2}\int_M g\left( J\theta , (\delta^g \theta) J\theta - D_T J \theta +D_{JT} \theta \right) \; F^n \\
    & =& \frac{t(n-1)}{2}\int_M g(D_{JT}\theta,J\theta) \, F^n \; .
\end{eqnarray*}
We replace this in the above integral to conclude
\begin{equation}\label{J-anti-D-Lee}
\int_M\|\left(D\theta\right)^{J,-}\|^2\,F^n=(1-t)\frac{(n-1)}{2}\int_Mg(D_{JT}\theta,J\theta)\,F^n.
\end{equation}
But recall that at the start of this proof we also established
\begin{equation} \label{J-inv-D-Lee}
    \int_M  \|(D\theta)^{J,+}\|^2 F^n = - \frac{n}{2}\int_M g(D_{JT}\theta,J\theta) \, F^n \; .
\end{equation}
Since $-1\leq t\leq 1,$ we have two cases:
\begin{enumerate}
\item if $t<1$, then Equations~(\ref{J-anti-D-Lee})  and (\ref{J-inv-D-Lee}) can be satisfied only when $$\int_M\|\left(D\theta\right)^{J,+}\|^2\,F^n=\int_M\|\left(D\theta\right)^{J,-}\|^2\,F^n=\int_Mg(D_{JT}\theta,J\theta)\,F^n=0,$$ and the theorem follows.\\
\item if $t=1$, then we conclude from (\ref{J-anti-D-Lee}) that $\left(D\theta\right)^{J,-}=0$. In this case, the result follows from Theorem~\ref{anti-pluricanonical} below.
\end{enumerate}
\end{proof}

 \section{Anti-Pluricanonical Locally conformally almost-K\"ahler metrics}\label{sec-anti-plu}

 So far, we investigated pluricanonical LCAK metrics $g$ characterized by the fact that $D\theta$ is $J$-anti-invariant. It is natural then to also examine LCAK metrics with the property that $D\theta$ is $J$-invariant. For this purpose, we introduce the following definition.
  
  \begin{defn}\label{def-anti-pluricanonical}
  Let $(F,\theta)$ be an LCS structure on an almost complex manifold $(M,J)$ and suppose that $F$ is compatible with the almost complex structure $J$ inducing an LCAK metric $g$. Then $g$ is called an {\it anti-pluricanonical} LCAK metric if the image of the Nijenhuis tensor is $g$-orthogonal to Span$(T,JT)$ and $D\theta$ is $J$-invariant, where $D$ is the Levi-Civita connection of $g$.
  \end{defn}

 It turns out that the anti-pluricanonical condition is equivalent to the fact that the Lee field $T=\theta^\sharp$ is a real holomorphic vector field (i.e. $\mathcal{L}_TJ=0$).

 \begin{prop}\label{holomorphic}
Let $(M, g, J, F, \theta)$ be an LCAK manifold. Then, $g$ is an anti-pluricanonical LCAK metric if and only if $T=\theta^\sharp$ is a (real) holomorphic vector field.
 \end{prop}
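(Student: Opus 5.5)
The plan is to compute $\mathcal{L}_TJ$ directly through the Levi-Civita connection. Then split it into a $g$-skew-symmetric part and a $g$-symmetric part. The two conditions defining the anti-pluricanonical property should turn out to be exactly the vanishing of these two parts.

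For any vector field $X$, torsion-freeness of $D$ gives
$$(\mathcal{L}_TJ)X = [T,JX]-J[T,X] = (D_TJ)X - D_{JX}T + J D_XT .$$
Write $A:=DT$, the endomorphism $X\mapsto D_XT$. Since $d\theta=0$, $A$ is $g$-symmetric. With this notation
$$\mathcal{L}_TJ = D_TJ + (JA - AJ).$$
The term $JA-AJ$ is $g$-symmetric, because $(JA)^*=A^*J^*=-AJ$. Moreover $JA-AJ=0$ if and only if $A$ commutes with $J$, that is, if and only if $D\theta$ is $J$-invariant. The term $D_TJ$ is $g$-skew, since $J$ is skew and $D_T$ preserves $g$. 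Hence
$$\mathcal{L}_TJ=0 \iff D_TJ=0 \ \mbox{ and } \ (D\theta)^{J,-}=0 .$$

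It remains to identify $D_TJ=0$ with the Nijenhuis condition. The proof of Proposition~\ref{J-parallel-T} gives, for any LCAK structure, the identity $(\iota_Z dF)^{J,-} = D_ZJ - 2N_{JZ}$. It also notes that $\iota_T dF = \|\theta\|^2F - \theta\wedge\iota_TF$ is $J$-invariant. Indeed, $\iota_TF$ is a multiple of $J\theta$, and $\theta\wedge J\theta$ is $J$-invariant. Taking $Z=T$ therefore gives $D_TJ = 2N_{JT}$ in general, with $2$-forms and endomorphisms identified via $g$. So $D_TJ=0$ if and only if $N_{JT}=0$, i.e. $g(N(X,Y),JT)=0$ for all $X,Y$.

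Next I would check that $N_{JT}=0$ is equivalent to $T$ (and hence $JT$) being $g$-orthogonal to the image of $N$. For this I use $N(JX,Y)=-JN(X,Y)$, which is immediate from the definition of $N$. It shows that the image of $N$ is $J$-stable, so $g(N(X,Y),JT) = -g(JN(X,Y),T) = g(N(JX,Y),T)$. Thus $N_{JT}=0$ if and only if $N_T=0$, which is orthogonality of both $T$ and $JT$ to the image, as remarked in the notation section.

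Combining the steps: $\mathcal{L}_TJ=0$ if and only if the image of $N$ is orthogonal to $\mathrm{Span}(T,JT)$ and $D\theta$ is $J$-invariant, which is exactly Definition~\ref{def-anti-pluricanonical}.

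The only real obstacle is bookkeeping of sign and factor conventions: the identification of $D_TJ$ with $2N_{JT}$, and the symmetric/skew splitting of $\mathcal{L}_TJ$. The argument needs only that the two pieces lie in complementary subspaces (skew versus symmetric endomorphisms), so the precise constants are irrelevant.
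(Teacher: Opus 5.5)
Your proof is correct and follows essentially the paper's argument: split $\mathcal{L}_TJ$ into a $g$-symmetric part, which vanishes exactly when $D\theta$ is $J$-invariant, and a $g$-skew part, which equals $2N_{JT}$. The difference is only in bookkeeping. You write $\mathcal{L}_TJ=D_TJ+(J\,DT-DT\,J)$ directly and get the skew piece from the Koszul identity in the proof of Proposition~\ref{J-parallel-T}. The paper instead reads the symmetric part off the Cartan-formula expression $dJ\theta=-\|\theta\|^2F+\theta\wedge J\theta+2(D\theta)^{sym}_{J\cdot,\cdot}+g(\mathcal{L}_TJ\cdot,\cdot)$ and takes $(\mathcal{L}_TJ)^{skew}=2N_{JT}$ from Proposition~\ref{j-invariance}. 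Your version also spells out the converse that the paper leaves as ``readily''.
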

 \begin{proof}
% Applying the Lie derivative $\mathcal{L}_T$ to $F=g(J\cdot,\cdot)$ and using the Cartan formula, we have that 
% \begin{equation}\label{cartan}
% dJ\theta=-\|\theta\|^2F+\theta\wedge J\theta+2\left(D\theta\right)^{sym}_{J\cdot,\cdot}+g(\mathcal{L}_TJ\cdot,\cdot).
% \end{equation}
 
 First suppose that $g$ is an anti-pluricanonical LCAK metric; then $\left(D\theta\right)^{sym}_{J\cdot,\cdot}$ is a $J$-invariant $2$-form, and we have  that $\left(\mathcal{L}_TJ\right)^{sym}=0$. Moreover, it follows from Proposition~\ref{j-invariance} that $\left(\mathcal{L}_TJ\right)^{skew-sym}=0$. Hence, $\mathcal{L}_TJ=0$, and so $T$ is a holomorphic vector field. The converse follows readily from Equation~(\ref{dJtheta-formula}).
 \end{proof}

In the integrable case, it was proved by A. Moroianu, S. Moroianu and Ornea~\cite{MR3830777} that on a compact LCK manifold, if $T=\theta^\sharp$ is a holomorphic vector field, then the LCK metric $g$ is Vaisman whenever $g$ is Gauduchon. We prove a generalization of this result to the non-integrable case.
 \begin{thm}\label{anti-pluricanonical}
 Let $(M, g, J, F, \theta)$ be a compact manifold of real dimension $2n$ with an LCAK structure.
 Suppose that the image of the Nijenhuis tensor is $g$-orthogonal to Span$(T,JT)$.
 Then, $g$ is an anti-pluricanonical LCAK Gauduchon metric if and only if $\theta$ is $D$-parallel.
 \end{thm}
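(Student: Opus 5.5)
The easy direction is immediate. If $D\theta=0$, then $D\theta$ is trivially $J$-invariant and $\delta^g\theta=-\tr D\theta=0$. Together with the standing hypothesis that the image of $N$ is orthogonal to Span$(T,JT)$, this says $g$ is anti-pluricanonical and Gauduchon.

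For the converse, assume $g$ is anti-pluricanonical and Gauduchon, so $(D\theta)^{J,-}=0$ and $\delta^g\theta=0$. The plan is to play the two integral identities already established against each other: the $L^2$-orthogonality identity (\ref{intf1-lcak}) and the Bochner formula (\ref{LCS-bochner-formula}).

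\emph{Step 1: simplify both identities.} Since $N_{JT}=0$ and $\delta^g\theta=0$, formula (\ref{intf1-lcak}) becomes
\[
\int_M \|(D\theta)^{J,+}\|^2F^n=\frac n2\int_M J\theta([T,JT])\,F^n=-\frac n2\int_M g(D_{JT}\theta,J\theta)\,F^n,
\]
exactly as at the start of the proof of Theorem~\ref{Gauduchon-Ricci-flat}. In (\ref{LCS-bochner-formula}) the left side is $\int\|(D\theta)^{J,+}\|^2$. I will show the Nijenhuis term vanishes. $N(T)$ is $g$-symmetric by Proposition~\ref{J-parallel-T}, and $N(T,JX)=-JN(T,X)$, so $N(T)$ is $J$-anti-invariant. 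Since $D\theta$ is symmetric and $J$-invariant, $g(D\theta,N(T))=0$ pointwise. This gives
\[
\int_M\|(D\theta)^{J,+}\|^2F^n=\int_M\rho^\ast(T,JT)F^n-\Big(n-\tfrac12\Big)\int_M g(D_{JT}\theta,JT)F^n.
\]

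\emph{Step 2: control $\int\rho^\ast(T,JT)$.} This is the main obstacle. I would use (\ref{Chernform}): $\Phi(T,JT)=0$ by Proposition~\ref{J-parallel-T}, so $\rho^\ast(T,JT)=\gamma^0(T,JT)$. Then I would use $\mathcal L_TJ=0$ (Proposition~\ref{holomorphic}) to compute $\gamma^0(T,JT)$ in terms of $T$. On a compact manifold, a real holomorphic field should satisfy a Weitzenböck-type identity: $\gamma^0$ represents $2\pi c_1$, and $\iota_T\gamma^0$ should equal $d$ of a divergence-type function (essentially $\delta^g(JT^\flat)$ up to Nijenhuis corrections that vanish by orthogonality) plus a term built from $DJ\theta$.

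An alternative first attempt, which I would try before this, is to apply the Bochner formula (\ref{bochner-formula}) directly to $\alpha=J\theta$ and combine it with (\ref{LCS-bochner-formula-JT}). Subtracting that identity from (\ref{LCS-bochner-formula}) eliminates $\rho^\ast(T,JT)$ entirely. For the remaining terms:
\begin{itemize}
\item $dJ\theta$ is $J$-invariant, and holomorphy of $T$ gives $(DJ\theta)^{sym}=J$-anti-invariant, so those terms are identified.
\item $\delta^g dJ\theta(JT)$ is supplied by (\ref{deldJtheta}).
\end{itemize}
What is left should be an identity relating $\int\|D\theta\|^2$, $\int\|DJ\theta\|^2$, $\int\|\theta\|^4$ and $\int g(D_{JT}\theta,J\theta)$.

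\emph{Step 3: conclude.} Combining the identities should yield a relation of the form $c\int\|(D\theta)^{J,+}\|^2F^n=-c'\int\|(D\theta)^{J,+}\|^2F^n$ with $c,c'>0$. Equivalently, it should force $\int g(D_{JT}\theta,J\theta)$ to be both $\le0$ and $\ge0$, much as in case 1 of the proof of Theorem~\ref{Gauduchon-Ricci-flat}. Then $\int\|D\theta\|^2F^n=0$, so $D\theta=0$.

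The sign bookkeeping in Step 2 is where I expect the real difficulty. I would first check the argument on the integrable case to match the Moroianu–Moroianu–Ornea argument.
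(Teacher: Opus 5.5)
\paragraph{Verdict.} There is a genuine gap. Your easy direction is fine, and so is Step 1: the reduction of (\ref{intf1-lcak}) is right, and so is the pointwise vanishing of $g(D\theta,N(T))$, since $N(T)$ is symmetric and $J$-anti-invariant. But Steps 2--3 are not an argument. They list things that ``should'' hold, and the route you sketch cannot produce the opposite-sign inequality you are hoping for. You have no independent handle on $\int_M\rho^\ast(T,JT)\,F^n$ here; in Theorem~\ref{Gauduchon-Ricci-flat} the second inequality came from the Ricci-flat hypothesis, which you do not have.

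\paragraph{Why the alternative route gives nothing new.} Under your assumptions, $\mathcal L_TJ=0$ gives $\mathcal L_{JT}J=4N(T,\cdot)$, hence $(DJ\theta)^{sym}=2N(JT)$. The Nijenhuis terms in (\ref{LCS-bochner-formula-JT}) then cancel. After inserting (\ref{deldJtheta}), the difference of the two Bochner identities collapses back to exactly
$\int_M\|(D\theta)^{J,+}\|^2F^n=\frac n2\int_M J\theta([T,JT])\,F^n$, which you already had. All these identities just move the same unknown quantity $\int_M J\theta([T,JT])\,F^n$ around, and combining them yields no contradiction.

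\paragraph{The missing step.} The $J$-invariance of $D\theta$ kills that quantity outright. Indeed
$g(D_{JT}\theta,J\theta)=(D\theta)(JT,JT)=(D\theta)(T,T)=g(D_TT,T)=\frac12T(\|\theta\|^2)$,
and by the Gauduchon condition
$\int_M T(\|\theta\|^2)\,F^n=\int_M\|\theta\|^2\delta^g\theta\,F^n=0$.
The paper's proof does exactly this, even pointwise:
$J\theta([T,JT])=g(D_TJT,JT)-g(D_{JT}T,JT)=\frac12T(\|\theta\|^2)-\frac12T(\|\theta\|^2)=0$.
Inserting this into the first display of your Step 1 gives $\int_M\|(D\theta)^{J,+}\|^2F^n=0$, hence $D\theta=(D\theta)^{J,+}=0$. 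That is the whole proof. Only (\ref{intf1-lcak}) is needed; the Bochner formulas, $\rho^\ast$, and the holomorphy of $T$ play no role.
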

 \begin{proof} Just the ``only if" direction needs proof. This follows directly from the integral identity (\ref{intf1-lcak}) of Proposition \ref{intf1-lcak-prop}. Indeed, if $D \theta$ is assumed $J$-invariant then
 $$ g(D_{JT} T, JT) = (D_{JT} \theta)(JT) = (D_{T} \theta)(T) = \frac{1}{2} T(\| \theta \|^2) \; ,$$
 so the last term on the right-hand side of relation (\ref{intf1-lcak}) vanishes:
 $$ J\theta([T,JT]) = g([T,JT], JT) = g(D_T JT, JT) - g(D_{JT} T, JT) = \frac{1}{2} T(\| \theta \|^2) - \frac{1}{2} T(\| \theta \|^2) = 0 \; .$$
 Using also assumptions $N_T = 0$, and $\delta^g \theta = 0$, the relation (\ref{intf1-lcak}) reduces to 
 $$ \int_M 4 \| (D \theta)^{J,+}_{J\cdot, \cdot} \|^2 F^n = 0 \; .$$
 Thus, using again the anti-pluricanonical assumption, it follows  that
 $ D \theta = (D \theta)^{J,+} = 0 $.
 \end{proof}

 \vspace{0.2cm}

\section{Explicit examples of Pluricanonical LCAK Lie algebras}\label{Sec3}
Plenty of examples of LCS structures of the first kind were presented in~\cite{MR3767426} (see also~\cite{MR3763412}). As a consequence of Remark~\ref{infinite-pluricanonical} and Theorem~\ref{equivalence-first_kind-pluri}, many pluricanonical LCAK examples can then be obtained from~\cite{MR3767426}.
\subsection{Pluricanonical LCAK $4$-dimensional Lie algebras}\label{LCAKdimension 4}

%\subsection{Explicit Examples}\label{Sec3} 
In the following, we present compact examples of pluricanonical LCAK manifolds of dimension $4$. In the first example, the Lee form is $D$-parallel, whereas it is not in the second and third examples. The examples are Lie algebras such that their associated Lie groups admit compact quotients (here we use the same notation of Lie algebras as~\cite{MR404362}).
%The following are examples of Lie algebras that can be equipped with pluricanonical LCAK metrics.
%Moreover,. 
%This actually answers a question of~\cite{MR4698640} about the existence of such examples.
It follows from Theorem~\ref{main-thm-1} that none
of these almost complex Lie groups admit a symplectic form that is compatible with the given almost complex structure. By the classification of $4$-dimensional Lie algebras~\cite{MR2307912}, the first two examples are symplectic but not the third.

\begin{enumerate}
\item{The Lie algebra $\mathcal{A}_{3,6}\oplus \mathcal{A}_1$}: the structure of the Lie algebra is 
$$[e_1,e_3]=-e_2,\quad [e_2,e_3]=e_1,$$
where $\{e_1,e_2,e_3,e_4\}$ is a basis of $\mathcal{A}_{3,6}\oplus \mathcal{A}_1$.
The associated simply connected group to the Lie algebra $\mathcal{A}_{3,6}\oplus \mathcal{A}_1$ admits lattices 
(see for example~\cite{MR3480018,MR3763412,MR4088745}, in the notation of~\cite{MR4088745} $\mathcal{A}_{3,6}\oplus \mathcal{A}_1$ corresponds to $\mathfrak{r}\mathfrak{r}^\prime_{3,0}$).
We consider the non-integrable almost complex structure
$$Je_1=e_3,\quad Je_2=e_4.$$
%The almost complex structure $J$ is non-integrable because $N(e_1,e_2)=\frac{1}{4}e_3.$ We consider the following metric $g$ compatible with $J$
%$$g=\sum_{i=1}^4e^i\otimes e^i,$$
%where $\{e^1,e^2,e^3,e^4\}$ is the dual basis. The pair $(J,g)$ induces the fundamental form
Then, the fundamental $2$-form
$$F=e^{13}+e^{24}$$
is compatible with $J$ (here $e^{12}=e^1\wedge e^2$ etc, where $\{e^1,e^2,e^3,e^4\}$ is the dual basis). The pair $(J,F)$ induces an LCAK metric with the Lee form given by
$$\theta=e^4.$$
%Hence $d\theta=0.$ 
Moreover, $\theta$ is $D$-parallel. We also remark that the image of the Nijenhuis tensor is Span$(e_1,e_3)$ so $T=\theta^\sharp=e_4$ is $g$-orthogonal to the image of the Nijenhuis tensor, and that the metric is Chern flat (that is the curvature of the Chern connection vanishes).\\

\item{The Lie algebra $\mathcal{A}_{4,1}$}: the structure of the Lie algebra is 
$$[e_2,e_4]=e_1,\quad [e_3,e_4]=e_2,$$
where $\{e_1,e_2,e_3,e_4\}$ is a basis of $\mathcal{A}_{4,1}$.
The associated simply connected group to the Lie algebra $\mathcal{A}_{4,1}$ admits lattices 
(see for example~\cite{MR3480018,MR3763412,MR4088745}, in the notation of~\cite{MR4088745} $\mathcal{A}_{4,1}$ corresponds to $\mathfrak{n}_{4}$). We consider the non-integrable almost complex structure
$$Je_1=e_3,\quad Je_2=e_4.$$
%The almost complex structure $J$ is non-integrable because $N(e_1,e_2)=\frac{1}{4}e_2.$ We consider the metric $g$ compatible with $J$ given by
%$$g=\sum_{i=1}^4e^i\otimes e^i,$$
%where $\{e^1,e^2,e^3,e^4\}$ is the dual basis. The pair $(J,g)$ induces the fundamental form
Then, the fundamental $2$-form
$$F=e^{13}+e^{24}$$
%Notice that the form $dF=e^{234}$. 
is compatible with $J$.  The pair $(J,F)$ induces an LCAK metric $g$ with the Lee form given by
$$\theta=-e^3.$$
In this example, $g$ is a pluricanonical LCAK metric. Indeed $$(D\theta)^{sym}=(D\theta)^{sym,J,-}=\frac{1}{2}\left(e^2\otimes e^4 +e^4\otimes e^2\right).$$
We also remark that the span of the Nijenhuis tensor is Span$(e_2,e_4)$, so $T=-e_3$ is $g$-orthogonal to the image of the Nijenhuis tensor.\\

\item{The Lie algebra $\mathcal{A}_{4,8}$}: the structure of the Lie algebra is 
$$[e_2,e_3]=e_1,\quad [e_2,e_4]=e_2,\quad [e_3,e_4]=-e_3,$$
where $\{e_1,e_2,e_3,e_4\}$ is a basis of $\mathcal{A}_{4,8}$.
The associated simply connected group to the Lie algebra $\mathcal{A}_{4,8}$ admits lattices 
(see for example~\cite{MR3480018,MR4088745}, in the notation of~\cite{MR4088745} $\mathcal{A}_{4,8}$ corresponds to $\mathfrak{d}_{4}$).
We consider the non-integrable almost complex structure
$$Je_1=e_4,\quad Je_2=e_3.$$
%The almost complex structure $J$ is non-integrable because $N(e_1,e_2)=\frac{1}{2}e_3.$ We consider the metric $g$ compatible with $J$ given by
%$$g=\sum_{i=1}^4e^i\otimes e^i,$$
%where $\{e^1,e^2,e^3,e^4\}$ is the dual basis. The pair $(J,g)$ induces the fundamental form
Then, the fundamental $2$-form
$$F=e^{14}+e^{23}$$
is compatible with $J.$
%Remark that here the form $dF=-e^{234},$ and moreover,
The pair $(J,F)$ induces an LCAK metric $g$ with the Lee form given by
$$\theta=-e^4.$$
%Hence $d\theta=0.$ 
The metric  $g$ is a pluricanonical LCAK metric. Indeed $$(D\theta)^{sym}=(D\theta)^{sym,J,-}=e^3\otimes e^3 -e^2\otimes e^2.$$
Furthermore, the image of the Nijenhuis tensor is Span$(e_2,e_3)$ so $T=-e_4$ is $g$-orthogonal to the image of the Nijenhuis tensor.\\

\end{enumerate}

\subsection{Pluricanonical LCAK $6$-dimensional Lie algebras}\label{LCAK-dimension 6}
In the following, we present two examples of non-LCK, LCAK nilpotent Lie algebras such that the Lee form is $D$-parallel. The examples are from the classification of locally conformal symplectic nilpotent Lie algebras in~\cite{MR3767426}. Since the structure equations are rational, the associated Lie groups admit compact quotients. Moreover, both examples are symplectic. Here, we use the notation of~\cite{MR3767426}.

\begin{enumerate}

\item{The Lie algebra $\mathcal{L}_{5,3}\oplus \mathcal{A}_1$}: the structure of the Lie algebra is 
$$[e_1,e_2]=e_5,\quad [e_2,e_3]=e_6,\quad [e_1,e_5]=-e_6,$$
where $\{e_1,e_2,e_3,e_4,e_5,e_6\}$ is a basis of $\mathcal{L}_{5,3}\oplus \mathcal{A}_1$.
We consider the non-integrable almost complex structure
$$Je_1=e_5,\quad Je_2=e_3,\quad Je_4=e_6.$$
%The almost complex structure $J$ is non-integrable because $N(e_1,e_2)=\frac{1}{4}e_5.$ We consider the metric $g$ compatible with $J$ given by
%$$g=\sum_{i=1}^6e^i\otimes e^i,$$
%where $\{e^1,e^2,e^3,e^4,e^5,e^6\}$ is the dual basis. The pair $(J,g)$ induces the fundamental form
The fundamental $2$-form
$$F=e^{15}+e^{23}+e^{46}$$
is compatible with $J$, and induces an LCAK metric $g$ with the Lee form given by
%Remark that here the form $dF=e^{415}+e^{423},$ and moreover, the Lee form is given by
$$\theta=e^4.$$
%Hence $d\theta=0.$ 
Moreover, it is easy to check that $D\theta=0.$ We also remark that $dJ\theta$ is $J$-invariant, hence the image of the Nijenhuis tensor is $g$-orthogonal to $T=e_4$.\\

\item{The Lie algebra $\mathcal{L}_{5,6}\oplus \mathcal{A}_1$}: the structure of the Lie algebra is 
$$[e_1,e_2]=e_4,\quad [e_1,e_4]=e_5,\quad [e_1,e_5]=e_6,\quad [e_2,e_4]=-e_6,$$
where $\{e_1,e_2,e_3,e_4,e_5,e_6\}$ is a basis of $\mathcal{L}_{5,6}\oplus \mathcal{A}_1$.
We consider the non-integrable almost complex structure
$$Je_1=e_5,\quad Je_2=e_4,\quad Je_3=e_6.$$
%The almost complex structure $J$ is non-integrable because $N(e_1,e_2)=\frac{1}{4}(e_1-e_4).$ We consider the metric $g$ compatible with $J$ given by
%$$g=\sum_{i=1}^6e^i\otimes e^i,$$
%where $\{e^1,e^2,e^3,e^4,e^5,e^6\}$ is the dual basis. The pair $(J,g)$ induces the fundamental form
The fundamental $2$-form
$$F=e^{15}+e^{24}+e^{36}$$
is compatible with $J$, and induces an LCAK metric $g$ with the Lee form given by
%Remark that here the form $dF=e^{315}+e^{324},$ and moreover, the Lee form is given by
$$\theta=e^3.$$
%Hence $d\theta=0.$ 
Moreover, we have that $D\theta=0.$ We also remark that $dJ\theta$ is $J$-invariant, hence the image of the Nijenhuis tensor is $g$-orthogonal to $T=e_3$.\\

\end{enumerate}

\subsection{Classification of unimodular almost abelian pluricanonical LCAK $4$-dimensional Lie algebras}

 An {\it almost abelian} Lie group $G$ is a Lie group whose Lie algebra $\g$ has a codimension-one abelian ideal $\n\subset \g$. Given an almost Hermitian left-invariant structure $(g,J)$ on a $2n$-dimensional almost abelian Lie group $G$, define $\n_1:=\n\cap J\n$ and $J_1: = J_{|_{\n_1}}$. Then we can choose an orthonormal basis $\{e_1,\ldots,e_{2n}\}$ for $\g$ such that 

   $$ \n=\mbox{Span}_\R(e_1,\ldots,e_{2n-1}) \quad \text{ and } \quad Je_i=e_{2n-i+1} \text{ for } i=1,\ldots,n.  $$

    Hence, the fundamental form $F(\cdot,\cdot):=g(J\cdot,\cdot)$ associated to the almost Hermitian structure $(J,g)$ is
    $$ F = e^1\wedge e^{2n} + e^2\wedge e^{2n-1} + \cdots + e^n\wedge e^{n+1},$$
    given in terms of the dual left-invariant frame $\{e^1,\ldots,e^{2n}\}$.

    The algebra structure of $\g$ is completely described by the adjoint map 
    \begin{eqnarray*}  
    \mbox{ad}_{e_{2n}}: \g& \rightarrow& \g \\
     x &\mapsto& [e_{2n},x].\end{eqnarray*}
     The matrix associated to this endomorphism is
    
    \begin{equation}\label{eq: ad_e_2n}
        \ad_{e_{2n}|_\n} = \begin{pmatrix} 
        a & b \\
        v & A
        \end{pmatrix}, \quad a \in\R,\; b,v \in \n_1,\; A\in\mathfrak{gl}(\n_1).
    \end{equation}
    
    The data $(a,b,v,A)$ completely characterize the almost Hermitian structure $(J,g)$. For example, the integrability of $J$ can be expressed in terms of $(a,b,v,A)$ asking that $b=0$ and $A\in\mathfrak{gl}(n_1,J_1)$, where $\mathfrak{gl}(n_1,J_1)$ denotes endomorphisms of $\n_1$ commuting with $J_1$, see \cite[Lemma 4.1]{MR3957836}.
    
    On an almost abelian almost Hermitian Lie group $\left(G,[\cdot,\cdot]_{(a,b,v,A)},J,g\right),$ the Lee form is given by
    \begin{equation}\label{lee-form}
        (n-1)\theta = J\delta^gF = (Jv)^{\flat} - (\tr\,A)e^{2n},
    \end{equation}
    with respect to the adapted unitary basis $\{e_1,\ldots,e_{2n}\}$, see for example~\cite{MR4466741}.
    
    Almost abelian Lie algebras admitting LCS structures were studied in~\cite{MR3763412}. In the following, we classify almost abelian unimodular $4$-dimensional Lie algebras that can be equipped with a pluricanonical LCAK metric. We suppose that the dimension is $4$ and that the Lie algebra is unimodular. The condition that $\theta$ is $d$-closed implies that
    \begin{align*}
    A_{21}v_1&=A_{11}v_2,\\
    A_{22}v_1&=A_{12}v_2.
    \end{align*}
    On the other hand, the fact that $T=\theta^\sharp$ is $g$-orthogonal to the image of the Nijenhuis tensor implies the following:
    \begin{align*}
    A_{11}v_1+A_{21}v_2&=-ab_1,\\
    A_{12}v_1+A_{22}v_2&=-ab_2.
    \end{align*}
    Moreover, the condition that $D\theta$ is $J$-anti-invariant implies that 
    \begin{align*}
    a&=0,\\
    A_{22}v_1&=A_{21}v_2,\\
    A_{12}v_1&=A_{11}v_2.
    \end{align*} 
    
    We obtain then the following theorem
    \begin{thm}
    Let $\mathfrak{g}$ be a $4$-dimensional unimodular almost abelian Lie algebra equipped with a pluricanonical LCAK non-symplectic structure $(g, J, F, \theta)$. Then, $\mathfrak{g}$ is isomorphic to one of the following Lie algebras:
    \begin{enumerate}[label=\arabic*)]
    \item $\mathcal{A}_{4,1}:\, [e_2,e_4]=e_1,\quad [e_3,e_4]=e_2,$\\
    \item $\mathcal{A}_{3,6}\oplus \mathcal{A}_1:\,[e_1,e_3]=-e_2,\quad [e_2,e_3]=e_1$,\\
     \item $\mathcal{A}_{3,4}\oplus \mathcal{A}_1:\,[e_1,e_3]=e_1,\quad [e_2,e_3]=-e_2.$
\end{enumerate}
The simply connected Lie groups associated to these Lie algebras admit compact quotients. Moreover, if $J$ is integrable, then the Lie algebra is $\mathcal{A}_{4,1}$.
    \end{thm}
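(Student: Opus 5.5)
The plan is to solve the algebraic system already derived above for the data $(a,b,v,A)$ of \eqref{eq: ad_e_2n} in dimension $4$ (so $n=2$, $\n_1=\mathrm{Span}(e_2,e_3)$, $b,v\in\n_1$, $A\in\mathfrak{gl}(\n_1)$). We then identify the resulting Lie algebras up to isomorphism in the list of \cite{MR404362,MR2307912}. Unimodularity means $\tr(\ad_{e_4})=a+\tr A=0$. The $J$-anti-invariance of $D\theta$ gives $a=0$, and hence $\tr A=A_{11}+A_{22}=0$. By \eqref{lee-form}, $\theta=(Jv)^\flat-(\tr A)e^4=(Jv)^\flat$. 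Since the structure is assumed non-symplectic, $\theta\neq 0$, so $v\neq 0$.

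With $a=0$ the Nijenhuis conditions become $A^{t}v=0$. Together with the closedness conditions and the anti-invariance conditions, this is a linear system in the entries of $A$ with $v\neq0$ fixed. I would combine the pairs as follows. Closedness gives $A_{21}v_1=A_{11}v_2$ and anti-invariance gives $A_{12}v_1=A_{11}v_2$, so $(A_{21}-A_{12})v_1=0$, and similarly $(A_{21}-A_{12})v_2=0$. Hence $A$ is symmetric. Then $A^tv=Av=0$, so $A$ is symmetric and traceless with nontrivial kernel, which forces $A=0$. The closedness relations are then automatic.

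So the remaining free data are $b\in\n_1$ and $v\neq 0$, with $a=0$ and $A=0$. The bracket is $[e_4,e_1]=b_1e_2+b_2e_3$ and $[e_4,e_2]=v_1e_1$, $[e_4,e_3]=v_2e_1$, and $\n$ is abelian. The nilpotent endomorphism $\ad_{e_4}|_{\n}$ sends $e_1\mapsto b$ and $\n_1\to\mathbb{R}e_1$ via $v$. The $3\times3$ matrix is nilpotent except when $\langle b,v\rangle\neq 0$, where we are checking the mixing $e_1\to b\to \langle v,b\rangle e_1$. So the case split is on $\ad_{e_4}|_\n$:
\begin{itemize}
\item[(a)] $\langle b,v\rangle=0$ and $b\neq0$: then $\ad_{e_4}^2\neq0$ on $\n$ and $\ad_{e_4}^3=0$, so the algebra is the filiform $\mathcal{A}_{4,1}$.
\item[(b)] $b=0$: then $\ad_{e_4}$ has rank one with square zero, giving $\mathfrak{h}_3\oplus\mathbb{R}$. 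This needs checking against the non-symplectic/LCAK hypotheses, and it may fail the requirement that the Lee vector $T\propto Jv$ satisfies the orthogonality conditions.
\item[(c)] $\langle b,v\rangle\neq0$: then $\ad_{e_4}|_\n$ has eigenvalues $0,\pm\sqrt{\langle b,v\rangle}$ when $\langle b,v\rangle>0$, giving $\mathcal{A}_{3,4}\oplus\mathcal{A}_1$, and $0,\pm i\sqrt{-\langle b,v\rangle}$ when $\langle b,v\rangle<0$, giving $\mathcal{A}_{3,6}\oplus\mathcal{A}_1$. In both cases the kernel direction gives the abelian summand.
\end{itemize}

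The main obstacle is matching to the stated list. The expected answer omits $\mathfrak{h}_3\oplus\mathbb R$, so I would recheck case (b). There, $N(T,\cdot)$ or $D\theta$ should be recomputed directly, since one of the paper's listed conditions may not fully capture everything. I would also reconcile which signs of $\langle b,v\rangle$ actually occur; note the paper's realization of $\mathcal{A}_{3,6}\oplus\mathcal{A}_1$ has $\theta$ in the abelian factor.

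For lattices, cite \cite{MR4088745}. For integrability, \cite[Lemma 4.1]{MR3957836} requires $b=0$ and $A$ $J_1$-linear. Then $b=0$ and we are in the degenerate case. In the paper's example the integrable one is $\mathcal{A}_{4,1}$, so I would expect the paper's own computation of the conditions, done more carefully, to reorganize the cases accordingly.
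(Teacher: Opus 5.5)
Your reduction is correct and follows the paper's route. Unimodularity with $a=0$ gives $\tr A=0$. Then $\theta=(Jv)^\flat\neq0$ forces $v\neq0$. Closedness together with anti-invariance makes $A$ symmetric, and $A^tv=Av=0$ then kills $A$. The paper simply asserts ``$A=0$'' and splits on $b\cdot v$ via the Jordan form of $\ad_{e_4}|_\n$; your argument is a clean justification of that step.

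The gap is that you leave case (b) open and expect a more careful computation to exclude it. It is not excluded. The paper's linear conditions are exactly $d\theta=0$, $T\perp\mathrm{Im}\,N$ (given closedness and unimodularity), and $(D\theta)^{J,+}=0$. In real dimension $4$ the identity $dF=\theta\wedge F$ is automatic. Once $a=0$ and $A=0$, all of these hold identically, and a direct computation gives
\[
D\theta=\tfrac12\,(v_1b_2-v_2b_1)\,(e^1\otimes e^4+e^4\otimes e^1)
\]
for every $b$ and $v$. So every $b$ and every $v\neq0$ is admissible. Concretely, take $b=0$ and $v=e_2$:
\begin{itemize}
\item the only bracket is $[e_4,e_1]=e_2$, and $\theta=e^3$;
\item $J$ is integrable by \cite[Lemma 4.1]{MR3957836}, so the orthogonality condition is empty;
\item $T=e_3$ is central and orthogonal to $[\g,\g]=\mathbb{R}e_2$, hence $D\theta=0$.
\end{itemize}
This is the left-invariant Vaisman structure on $\mathfrak{h}_3\oplus\mathbb{R}$ underlying primary Kodaira surfaces.

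Consequently the statement as written is false, and no completion of your argument (or the paper's) can prove it. The list omits $\mathfrak{h}_3\oplus\mathbb{R}$. The integrability clause is also wrong: integrability forces $b=0$, hence $\mathfrak{h}_3\oplus\mathbb{R}$, whereas the filiform $\mathcal{A}_{4,1}$ admits no complex structure at all.

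The paper's proof breaks exactly where you hesitated. In its case ``$b\cdot v=0$, in particular when $J$ is integrable'' it claims a single $3\times3$ nilpotent Jordan block. That holds only for $b\neq0$; for $b=0$, $\ad_{e_4}|_\n$ has rank one and square zero.

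With case (b) accepted rather than doubted, your argument proves the corrected classification:
\begin{itemize}
\item $\g$ is isomorphic to $\mathfrak{h}_3\oplus\mathbb{R}$, $\mathcal{A}_{4,1}$, $\mathcal{A}_{3,4}\oplus\mathcal{A}_1$, or $\mathcal{A}_{3,6}\oplus\mathcal{A}_1$;
\item all four occur; for the last two take $v=e_2$, $b=(\pm1,0)$, so there is no sign issue to reconcile;
\item integrable $J$ forces $\g\cong\mathfrak{h}_3\oplus\mathbb{R}$;
\item the lattice claim persists, since $\mathfrak{h}_3\oplus\mathbb{R}$ is nilpotent with rational structure constants.
\end{itemize}

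One small correction: your explicit brackets transpose the paper's convention. Consistently with $\theta=(Jv)^\flat-(\tr A)e^4$ and the closedness equations, one has $[e_4,e_1]=ae_1+v$ and $[e_4,x]=\langle b,x\rangle e_1+Ax$ for $x\in\n_1$. This does not change the isomorphism types, since a matrix is similar to its transpose. However, in your labeling the integrability criterion would read $v=0$, which would force $\theta=0$.
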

    \begin{proof}
From the above discussion, we deduce that $A=0$ and one of the $v_i$ is not zero. We have then three cases:

\begin{enumerate}
\item $b\cdot v= 0$, in particular when $J$ is integrable: the canonical Jordan form of  $\ad_{e_{4}|_\n}$ up to scaling is
$\begin{pmatrix} 
        0 & 1 &0\\
        0 & 0&1\\
        0 & 0&0\\
        \end{pmatrix},$ which corresponds to $\mathcal{A}_{4,1}$.

\item $b\cdot v>0$: the canonical Jordan form of  $\ad_{e_{4}|_\n}$ up to scaling is
$\begin{pmatrix} 
        0 & 0 &0\\
        0 & 1&0\\
        0 & 0&-1\\
        \end{pmatrix},$ which corresponds to $\mathcal{A}_{3,4}\oplus \mathcal{A}_1.$
 
\item $b\cdot v<0$: the canonical Jordan form of  $\ad_{e_{4}|_\n}$ up to scaling is
$\begin{pmatrix} 
        0 & 0 &0\\
        0 & 0&1\\
        0 & -1&0\\
        \end{pmatrix},$ which corresponds to $\mathcal{A}_{3,6}\oplus \mathcal{A}_1$.
\end{enumerate}
    \end{proof}

\bibliographystyle{abbrv}%\bibliographystyle{ieeetr}

\bibliography{LCK-balanced}

\end{document}